\newcommand\headercell[1]{\smash[b]{\begin{tabular}[t]{@{}c@{}} #1 \end{tabular}}}
\newcolumntype{M}[1]{>{\centering\arraybackslash}m{#1}}
\pgfplotsset{width=7cm,compat=1.9}
\tikzset{cross/.style={cross out, draw=black, minimum size=2*(#1-\pgflinewidth), inner sep=0pt, outer sep=0pt}, cross/.default={1pt}}
\theoremstyle{plain}
\newtheorem{theorem}{Theorem}[section]
\newtheorem{proposition}[theorem]{Proposition}
\newtheorem{lemma}[theorem]{Lemma}
\newtheorem{corollary}[theorem]{Corollary}
\theoremstyle{definition}
\newtheorem{definition}[theorem]{Definition}
\newtheorem{remark}[theorem]{Remark}
\newcommand{\NN}{{\mathbb{N}}}
\newcommand{\QQ}{{\mathbb{Q}}}
\newcommand{\RR}{{\mathbb{R}}}
\newcommand{\ZZ}{{\mathbb{Z}}}
\newcommand{\mycomment}[1]{}
\DeclareMathOperator{\cone}{cone}
\DeclareMathOperator{\pen}{pen}
\DeclareMathOperator{\conv}{conv}
\DeclareMathOperator{\interior}{int}
\DeclareMathOperator{\rec}{rec}
\DeclareMathOperator{\width}{width}
\DeclareMathOperator{\Vol}{Vol}
\DeclareMathOperator{\ehr}{ehr}
\title{Classification and Ehrhart Theory of Denominator 2 Polygons}
\author[G.\ Hamm]{Girtrude~Hamm}
\address{The Department of Mathematics\\University of Western Ontario\\
London\\ Ontario\\ N6A 5B7\\Canada}
\email{ghamm@uwo.ca}
\author[J.\ Hofscheier]{Johannes~Hofscheier}
\address{School of Mathematical Sciences\\University of Nottingham\\Nottingham\\NG7 2RD\\UK}
\email{johannes.hofscheier@nottingham.ac.uk}
\author[A.\ Kasprzyk]{Alexander~Kasprzyk}
\address{School of Mathematical Sciences\\University of Nottingham\\Nottingham\\NG7 2RD\\UK}
\email{a.m.kasprzyk@nottingham.ac.uk}
\subjclass{Primary: 52B20; Secondary: 52C05, 52-08}
\keywords{Rational polygon, classification algorithm, Ehrhart quasi-polynomial, Scott's inequality}
\begin{document}

\begin{abstract}
    We present an algorithm for growing the denominator \(r\) polygons containing a fixed number of lattice points and enumerate such polygons containing few lattice points for small \(r\).
    We describe the Ehrhart quasi-polynomial of a rational polygon in terms of boundary and interior point counts.
    Using this, we bound the coefficients of Ehrhart quasi-polynomials of denominator 2 polygons.
    In particular, we completely classify such polynomials in the case of zero interior points.
\end{abstract}

\maketitle

\section{Introduction}

A \emph{lattice point} is a point in a lattice \(\ZZ^d\).
A \emph{rational polytope} \(P \subseteq \RR^d\) is the convex hull of finitely many rational points \(v_1, \dots, v_k\) in \(\QQ^d\).
It is called a \emph{lattice polytope} if its vertices are all lattice points, and we call a polytope a \emph{polygon} if it is two-dimensional.
We consider rational polytopes as being defined up to affine unimodular equivalence.
Polytopes \(P\) and \(P'\) are said to be \emph{affine equivalent} if \(P\) can be mapped to \(P'\) by a change of basis of \(\ZZ^d\) followed by an integral translation.
The \emph{denominator} of a rational polytope \(P\) is an integer \(r \geq 1\) such that the dilation \(rP\) is a lattice polytope.
Note that some authors require this to be the minimal such integer, but we do not include that requirement.
The \emph{size} of a polytope is the number of lattice points it contains.
In this paper we algorithmically classify rational polygons with small size and denominator and use the classification to study the Ehrhart theory of denominator 2 polygons.

Rational polygons appear in a variety of settings, making them interesting objects to study in their own right.
For example, spherical Fano varieties of rank 2 correspond to certain rational polygons \cite{GG_JH_sph_poly}. 
In \cite{k-empty-polygons} rational polygons whose only lattice points are vertices, are classified to understand canonical three-fold singularities with a complexity one torus action.
Denominator 2 polygons are the intersection of a 3-dimensional width 2 polytope with a hyperplane, which is used in \cite{half-int-intersections}, towards understanding the three-dimensional maximal polyhedra with no interior lattice points.
Also of note is the paper of Bohnert and Springer \cite{Bohnert-Justus-Class}, classifying rational polygons by number of interior points rather than size, which was completed concurrently with this project.
Their results agree with and extend Table~\ref{tab:number_found}, though their methods are different to ours.

Fix integers \(r \geq 2\) and \(k \geq 0\), then there are infinitely many rational polygons with denominator \(r\) and size \(k\), up to affine equivalence.
For example, when \(a\) and \(k\) are positive integers the polygon \(\conv((0,0),(k-1,0),(0,\frac1r),(\frac{a}{r},\frac1r))\) has denominator \(r\) and size \(k\) as in Figure~\ref{fig:inf_growable}.
However, all but finitely many denominator \(r\) size \(k\) polygons are part of an infinite family in the following sense.
\begin{definition}\label{def:inf_grow}
A polygon \(P\) with denominator \(r\) and size \(k\) is \emph{infinitely growable} if there exists an infinite sequence of polygons \(P_0\), \(P_1\), \(P_2\), \(\dots\) such that
\begin{enumerate}
    \item \(P_0=P\),
    \item \(P_i\) is a proper subset of \(P_{i+1}\) for all \(i=0,1,2,\dots\) and
    \item \(P_i\) is a polygon with denominator \(r\) and size \(k\) for all \(i=0,1,2,\dots\).
\end{enumerate} 

A polygon \(Q\) with denominator \(r\) and size \(k\) is said to be \emph{maximal} if for given any point \(v \in \frac{1}{r}\ZZ^2\) not in \(Q\) the size of \(\conv(Q \cup v)\) is greater than \(k\).
We say that a polygon \(P\) with denominator \(r\) and size \(k\) is \emph{finitely growable} if it is not infinitely growable.
Notice that a finitely growable polygon is always a subset of some maximal polygon.
\end{definition}

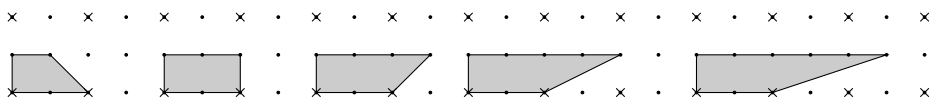
\begin{figure}[ht]
\centering
\begin{tikzpicture}[x=0.5cm,y=0.5cm]
\foreach \i in {0,...,3}{
    \draw[fill=gray!40] (4*\i,0) -- (4*\i+2,0) -- (4*\i+\i+1,1) -- (4*\i,1) -- cycle;
}
\draw[fill=gray!40] (18,0) -- (20,0) -- (23,1) -- (18,1) -- cycle;
\foreach \x in {0,2,...,24}{
	\foreach \y in {0,2}{
		\node[cross=2pt] at (\x,\y) { };
	}
}
\foreach \x in {0,...,24}{
    \foreach \y in {0,1,2}{
        \node[draw,circle,inner sep=0.1pt,fill] at (\x,\y) { };
    }
}
\end{tikzpicture}
\caption{A collection of polygons with denominator 2 and size 2 which can be grown infinitely.
Crosses denote points of \(\ZZ^2\), dots denote points of \(\frac12\ZZ^2\).}
\label{fig:inf_growable}
\end{figure}
We prove the following result in Corollary~\ref{cor:dim-2-grow-inf} and Proposition~\ref{prop:hip_fin}.
\begin{proposition}\label{prop:fin_many_fin_grow}
    Let \(P\) be a denominator \(r\), size \(k\) polygon, then either \(P\) is infinitely growable and is equivalent to a polygon contained in the strip \([0,1] \times \RR\) or \(P\) is finitely growable and is one of finitely many exceptions.
\end{proposition}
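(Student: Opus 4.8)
The plan is to reformulate the strip condition in terms of lattice width and then treat the two alternatives separately, the dividing line being whether \(\width(P)\le 1\) or \(\width(P)>1\). First I would record that \(P\) is equivalent to a polygon contained in \([0,1]\times\RR\) precisely when \(\width(P)\le 1\): an affine unimodular map can send a width-minimizing primitive functional to the first coordinate and translate so that \(0\le x\le \width(P)\) on \(P\). Since \(P\) has denominator \(r\), every vertex lies in \(\frac1r\ZZ^2\), so \(\width(P)\in\frac1r\ZZ\); in particular the two cases \(\width(P)\le 1\) and \(\width(P)\ge 1+\frac1r\) are exhaustive.

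For the infinitely growable direction I would show directly that \(\width(P)\le 1\) implies \(P\) is infinitely growable. Placing \(P\) in \([0,1]\times\RR\) as above, every lattice point of \(P\) has first coordinate \(0\) or \(1\), since no integer lies strictly between them. Let \([x_{\min},x_{\max}]\) be the projection of \(P\) to the first coordinate and choose \(x^\ast\in\frac1r\ZZ\cap(0,1)\) lying in \([x_{\min},x_{\max}]\) (taking \(x^\ast=x_{\min}\), or \(x_{\max}\), or \(1/r\) according to which endpoint is interior to \((0,1)\)). With \(q_n=(x^\ast,y_{\max}+n)\), where \(y_{\max}=\max\{y:(x,y)\in P\}\), set \(P_n=\conv(P\cup\{q_n\})\). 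A short convexity computation shows \(\conv(P\cup\{q_n\})\cap\{x=0\}=P\cap\{x=0\}\) and likewise for \(x=1\): any convex combination using \(q_n\) with positive weight has first coordinate strictly between those of \(q_n\) and of the point of \(P\) used, so it reaches \(x\in\{0,1\}\) only through points of \(P\). Hence each \(P_n\) has the same lattice points as \(P\) and denominator \(r\), and the \(P_n\) form a strictly increasing chain because \(q_n\) lies on the segment from \(q_{n+1}\) to a point of \(P\) on the line \(x=x^\ast\). This realises Definition~\ref{def:inf_grow} and recovers the family of Figure~\ref{fig:inf_growable}.

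The substantive half is the finiteness statement for \(\width(P)>1\), which I would prove by bounding \(\Vol(P)\) by a function of \(k\) alone. Let \(u\) be a primitive functional with \(w:=\width(P)>1\); then \(P\) lies in a slab \(\{c_0\le\langle u,x\rangle\le c_0+w\}\) of width \(w>1\), so \((c_0,c_0+w)\) contains an integer \(c\), and the line \(\{\langle u,x\rangle=c\}\) meets \(P\) in a chord. As \(u\) is primitive this line carries lattice points spaced one apart, so a chord of length \(\ell\) contains at least \(\ell-1\) lattice points of \(P\); since \(P\) has only \(k\) lattice points, every interior integer chord has length at most \(k+1\). The step I expect to be the main obstacle is upgrading this bound on one chord to a bound on the diameter of \(P\). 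Here one must use that \(\width(P)>1\) holds in \emph{every} direction, so \(P\) cannot be a thin tilted needle (whose minimal width would be its small thickness); a fat convex body of large diameter necessarily has a long chord at an interior integer level, producing many lattice points and hence a contradiction once the diameter exceeds a bound depending only on \(k\). The delicate regime is \(1<w<2\), where only a single interior integer level is guaranteed and one must argue that its chord still captures a definite fraction of the diameter.

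Granting a diameter bound \(D\le f(k)\), the width of \(P\) is at most \(D\) in every direction, so \(\Vol(P)\le f(k)^2\) is bounded independently of \(r\). Then \(\Vol(rP)=r^2\Vol(P)\) is bounded, and since there are only finitely many lattice polygons of bounded volume up to affine unimodular equivalence, there are finitely many possibilities for \(rP\), and hence, as \(r\) is fixed, finitely many for \(P\). Finally I would assemble the dichotomy: any infinitely growable \(P\) must satisfy \(\width(P)\le 1\), for otherwise the monotonicity of lattice width under inclusion forces every member of an infinite strictly increasing chain to have width \(>1\) and size \(k\), hence bounded volume by the above; but a strictly increasing chain of full-dimensional polygons has strictly increasing volumes lying in the discrete set \(\frac{1}{2r^2}\ZZ\), which cannot be infinite while bounded. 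Thus infinitely growable polygons lie in the strip \([0,1]\times\RR\), while the remaining finitely growable polygons all have \(\width(P)>1\) and so are among the finitely many exceptions.
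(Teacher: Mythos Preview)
Your width-based dichotomy and the constructions in step~2 and step~5 are sound (at least for \(r\ge 2\)), and the overall architecture matches the paper's Corollary~\ref{cor:dim-2-grow-inf}. The gap is precisely where you flag it: step~3, the volume bound for polygons with \(\width(P)>1\).

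First, the specific claim you make---that \(\Vol(P)\le f(k)^2\) \emph{independently of \(r\)}---is false. For even \(r\ge 2\) take the triangle \(T_{r,N}=\conv\bigl((-\tfrac1r,0),(1+\tfrac1r,0),(\tfrac12,N)\bigr)\). One checks that \(\width(T_{r,N})=1+\tfrac2r>1\) (attained only at \(u=(\pm1,0)\)), that the chords at \(x=0\) and \(x=1\) each have length \(\tfrac{2N}{r+2}\), and that \(|T_{r,N}\cap\ZZ^2|\approx \tfrac{4N}{r+2}\). Choosing \(N\approx \tfrac{k(r+2)}{4}\) gives size \(\approx k\) while the area is \(\tfrac12(1+\tfrac2r)N\approx \tfrac{kr}{8}\to\infty\) as \(r\to\infty\). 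So there is no bound depending on \(k\) alone; any bound must involve \(r\). This does not kill your argument (you only need a bound for fixed \(r\)), but it shows that ``a fat body of large diameter has a long integer chord'' cannot be made uniform in \(r\): in the example each integer chord has length \(\approx k/2\), yet the diameter is \(\approx kr/4\).

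Second, even allowing dependence on \(r\), your chord heuristic does not by itself yield the bound. When \(1<w<2\) there is a single guaranteed integer chord, and bounding its length does not control how far \(P\) extends on either side of it; one still has to rule out large hollow ``caps'' sitting over a short chord. This is exactly the content the paper imports from outside: Proposition~\ref{prop:hip_fin} splits into the non-hollow case, where the volume bound comes from Lagarias--Ziegler (polytopes with interior lattice points have volume bounded in terms of that count and the lattice index), and the hollow case, handled by Proposition~\ref{prop:hollow} (the result of \cite{LattProj}) which says that all but finitely many hollow \(L\)-polytopes project to a hollow \((d-1)\)-polytope---in dimension~2 this forces \(\width\le 1\). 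Your argument would need to reprove or invoke both of these facts; the chord picture alone does not substitute for them.
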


Since the finitely growable denominator \(r\) size \(k\) polygons are finite in number they are a potential subject for classification.
We classify them for small \(r\) and \(k\) using a growing algorithm, which involves finding a set of minimal polygons and iteratively adding points to them.
We implement the algorithm in \textsc{Magma~V2.27} and the resulting data can be found at \cite{data}.
The polygons are enumerated in Table~\ref{tab:number_found}.

\begin{table}[ht]
    \centering
    \begin{tabular}{@{} ccccccc @{}}
    \headercell{} & \multicolumn{6}{c@{}}{\(k\)}\\
    \cmidrule(l){2-7}
    \(r\) & 0 & 1 & 2 & 3 & 4 & 5 \\
    \midrule
    1 & 0 & 0 & 0 & 1 & 3 & 6\\
    2 & 1 & 106 & 1333 & 8774 & 40139 & ?\\
    3 & 211 & ? & ? & ? & ? & ?\\
    \end{tabular}
    \caption{The number of finitely growable polygons with denominator \(r\) and size \(k\).}
    \label{tab:number_found}
\end{table}

The Ehrhart quasi-polynomial is an important affine invariant of a rational polytope, counting the number of lattice points in its integral dilations.
For lattice polygons study of the Ehrhart quasi-polynomial reduces to the study of pairs \((b(P),i(P))\), where \(b(P)\) is the number of boundary points of \(P\) and \(i(P)\) is the number of interior points.
Using this fact, the Ehrhart quasi-polynomials of lattice polygons have been completely classified \cite{Scott,Haase_Schicho} (see Theorem~\ref{thm:Scotts}).
In higher dimensions and denominators no such complete classification is known, though some progress has been made towards understanding the denominator 2 polygons case.
In their master's thesis, Herrmann approached a classification of the odd components of these quasi-polynomials \cite{Herrmann}.
Bohnert and Springer have also found bounds on the coefficients of these polynomials, in their paper created concurrently with this one \cite{Bohnert-Justus-Ehr}.
In particular, they give bounds on the volume and an upper bound on the number of boundary points.

In a similar way to the lattice polygon case, study of Ehrhart polynomials of denominator 2 polygons reduces to the study of tuples \((b(P),i(P),b(2P),i(2P))\).
We prove the following result, describing restrictions on this four-dimensional set.
\begin{theorem}\label{thm:hip_finlap_main}
    For all but finitely many denominator 2 polygons \(P\), the number of boundary and interior points of \(P\) and \(2P\) satisfy one of the following conditions:
    \begin{enumerate}[(a)]
        \item \(i(P)=0\), \(i(2P)=0\) and \(b(2P) \geq \max(3,2b(P))\),
        \item \(b(P)=0\), \(i(P)=0\), \(b(2P)=4\) and \(i(2P)>0\),
        \item \(i(P)=0\), \(i(2P),b(P)>0\), \(\max(3,2b(P)) \leq b(2P) \leq 2b(P)+4\) and \(b(2P) \leq 2i(2P)+6\) or
        \item \(i(P) > 0\), \(b(2P) \geq \max\{3,2b(P)\}\), \(i(2P) \geq b(P)+2i(P)-1\) and \(b(2P)+i(2P) \leq 2b(P) + 6i(P) +7\).
    \end{enumerate}
\end{theorem}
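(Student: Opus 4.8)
The plan is to pass to the lattice polygon \(Q = 2P\) and to translate everything into a statement about the two nested lattices \(\ZZ^2 \subseteq \frac12\ZZ^2\). A point \(v \in P\) is a lattice point exactly when \(2v\) is an even lattice point of \(Q\), and this identification respects interior versus boundary; concretely \(i(2P) = |\interior(P) \cap \frac12\ZZ^2|\) and \(b(2P) = |\partial P \cap \frac12\ZZ^2|\), while \(i(P)\) and \(b(P)\) are the analogous counts for \(\ZZ^2\). Pick's theorem applied to \(Q\) then reads \(4\Vol(P) = i(2P) + \frac12 b(2P) - 1\), and Scott's inequality (Theorem~\ref{thm:Scotts}) applied to \(Q\) gives \(b(2P) \le 2 i(2P) + 7\) whenever \(i(2P) \ge 1\). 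These facts about \(Q\), together with the quasi-polynomial description of the introduction, are the engine of the proof. I would first record the universal lower bound \(b(2P) \ge \max(3, 2b(P))\): the bound \(b(2P) \ge 3\) is nondegeneracy of \(Q\), while \(b(2P) \ge 2b(P)\) follows because each edge of \(P\) interleaves a midpoint \(v + w \in \partial Q\) between consecutive integral points \(v, w\) of \(P\) on that edge. This single inequality already supplies the common clause of (a), (c) and (d).

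Next I would split on \(i(P)\). When \(i(P) = 0\), Proposition~\ref{prop:fin_many_fin_grow} lets me discard finitely many polygons and assume \(P\) is infinitely growable, hence (after an affine transformation) \(P \subseteq [0,1]\times\RR\). In this strip the integral points of \(P\) all lie on the lines \(x = 0, 1\), which pins down \(b(P)\) and the possible edge directions, while the half-integral points lie on \(x \in \{0, \frac12, 1\}\). A direct analysis of this family then separates the three subcases: \(i(2P) = 0\) gives (a); \(b(P) = 0\) forces \(P\) to meet no integral line and a short computation shows \(Q\) has exactly four boundary points, giving (b); and when \(b(P) > 0\) the same bookkeeping yields the two upper bounds \(b(2P) \le 2b(P) + 4\) and \(b(2P) \le 2i(2P) + 6\) of (c), the additive constants tracking the (at most two) half-integral vertices on each bounding line.

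The substantive case is (d), where \(i(P) > 0\). For the lower bound \(i(2P) \ge b(P) + 2i(P) - 1\) I would first prove it for a lattice polygon \(R\), where, via Pick's theorem for \(R\) and for \(2R\), it is equivalent to the clean statement \(2\Vol(R) \ge b(R)\), i.e.\ to \(i(R) \ge 1\), with equality forcing \(i(R) = 1\). To transfer this to \(P\) I would apply it to \(R = \conv(P \cap \ZZ^2)\) and use \(\interior(R) \subseteq \interior(P)\) to get \(i(2P) \ge i(2R) \ge b(R) + 2i(R) - 1\), which equals the desired bound exactly when \(i(R) = i(P)\). For the upper bound \(b(2P) + i(2P) \le 2b(P) + 6i(P) + 7\) I would combine Scott's inequality on \(Q\), bounding \(b(2P)\) by \(2i(2P) + 7\), with the Pick identity \(i(2P) + b(2P) = 4\Vol(P) + 1 + \frac12 b(2P)\), reducing the claim to simultaneous upper bounds on \(\Vol(P)\) and on \(b(2P)\) in terms of \(b(P)\) and \(i(P)\); here the interior points of \(P\) control \(\Vol(P)\) through Scott's inequality applied to \(R\).

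The hard part will be pinning down the exact additive constants and the finite exceptional set rather than the shape of the inequalities. Two difficulties stand out. First, the reduction to \(R = \conv(P \cap \ZZ^2)\) loses information precisely when an interior lattice point of \(P\) is a vertex of \(R\), so that \(i(R) < i(P)\); in that regime the slack in \(i(2P) \ge i(2R)\) coming from the extra half-integral interior points of \(P\) outside \(R\) must be shown to compensate, which needs a careful edge-by-edge count. Second, unlike \(b(2R) = 2b(R)\) for a lattice polygon, the gap \(b(2P) - 2b(P)\) is \emph{not} bounded in general---an edge at half-integral height carries no integral points but arbitrarily many half-integral ones---so every upper bound on \(b(2P)\) must be extracted from the interior data via Scott's inequality rather than from \(b(P)\) directly. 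Controlling this interaction, while absorbing the Scott-extremal configurations and the finitely many small polygons into the exceptional set, is what I expect to require the most work.
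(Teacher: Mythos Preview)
Your plan for the case \(i(P)=0\) --- discard the finitely many finitely growable polygons via Proposition~\ref{prop:fin_many_fin_grow}, place the rest in the strip \([0,1]\times\RR\), and analyse the three vertical lines --- is exactly what the paper does in Proposition~\ref{prop:inf_half_Ehr_1}, and your argument for \(b(2P)\ge\max(3,2b(P))\) is likewise the paper's. For the lower bound \(i(2P)\ge b(P)+2i(P)-1\) in~(d) you use \(R=\conv(P\cap\ZZ^2)\), whereas the paper (Proposition~\ref{prop:ehr_h_v_bounds}) takes instead the convex hull \(Q\) of the \emph{interior} lattice points of \(P\); this choice avoids your acknowledged gap when \(i(R)<i(P)\), since then \(b(R)+2i(R)-1<b(P)+2i(P)-1\) and the inequality \(i(2P)\ge i(2R)\) is too weak. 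The paper finishes that bound with a short subdivision argument joining each boundary lattice point of \(P\) to \(Q\), picking up \(b(P)\) extra half-integral interior points.

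The real gap is the upper bound \(b(2P)+i(2P)\le 2b(P)+6i(P)+7\). Your reduction via Pick correctly rewrites this as a bound on \(4\Vol(P)+\tfrac12 b(2P)\), but Scott's inequality applied to \(R\) only bounds \(\Vol(R)\), not \(\Vol(P)\), and the annulus \(P\setminus R\) can have area comparable to \(\Vol(P)\) itself (think of a half-integral polygon whose lattice points are colinear, so that \(R\) is a segment of area zero while \(\Vol(P)\) grows with \(i(P)\)). Scott on \(2P\) fares no better: it yields \(b(2P)+i(2P)\le 3i(2P)+6\), and combining with your own lower bound \(i(2P)\ge b(P)+2i(P)-1\) shows this is only strong enough when \(b(P)\le4\). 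There is no Scott-type input that closes this, because the desired inequality \emph{is} the denominator-\(2\) analogue of Scott. The paper's proof (Proposition~\ref{prop:ehr_diag_bound}) proceeds by an entirely different, geometric route: it counts \(|2P\cap\ZZ^2|\) line by line, uses a trapezium identity (Lemma~\ref{lem:trapezium}) to eliminate odd-index columns in favour of even ones, and then invokes an ``hourglass'' lemma (Lemma~\ref{lem:hourglass}) giving \emph{lower} bounds on \(p_h(2P)\) in terms of the first and second widths of \(P\). Summing these forces the desired inequality once \(\width(2P)\) exceeds an absolute constant (\(\le 22\)); the remaining bounded-width exceptions are then shown to be finite by a separate argument (Lemma~\ref{lem:hourglass_finiteness}). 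None of this machinery is present in your sketch.
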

When \(i(P)=0\) there are exactly 2 polygons which satisfy none of these conditions, and all tuples \((b_1,0,b_2,i_2) \in \ZZ^4\) which satisfy condition (a), (b) or (c) are realised by some polygon.
Thus, we have a complete classification of the Ehrhart polynomials of denominator 2 polygons with zero interior points.

Condition (d) does not completely classify the remaining tuples in the same way.
There are points which satisfy condition (d) and are realised by no denominator 2 polygon, and there may be finitely many exceptions to the inequality \(b(2P)+i(2P) \leq 2b(P) + 6i(P) +7\), though we have found none.
Our interest in these particular bounds is justified by the existence of certain infinite families of polygons shown in Figure~\ref{fig:inf_families}.
These realise infinite collections of points on the boundaries of (d), contained in pairs of skew lines.
This suggests that an `optimal' polyhedron, containing all but finitely many points of the form \((b(P),i(P),b(2P),i(2P))\), will have unbounded facets defined by these inequalities.

In Section~\ref{sec:infty-grow-hip} we describe infinitely growable polytopes in greater generality and show that a rational polygon is infinitely growable if and only if it is equivalent to a subset of the strip \([0,1] \times \RR\).
In Section~\ref{sec:growing_alg} we present an algorithm for finding all the finitely growable polygons and prove that it will always terminate in finite time.
This depends on finding a list of `minimal' polygons and growing them by successively adding points.
In Section~\ref{sec:minimal_polygons} we classify the minimal, denominator \(r\), size \(k\) polygons in general.
When \(k>0\), these are the lattice polygons of size \(k\) and one rational triangle.
Using these minimal polygons, we run the classification for small values of \(r\) and \(k\).
Finally, in Section~\ref{sec:ehrhart} we use this dataset to investigate the Ehrhart theory of denominator 2 polygons and prove Theorem~\ref{thm:hip_finlap_main}.
This includes a complete classification of the tuples \((b(P),i(P),b(2P),i(2P))\) when \(P\) has no interior points.

\section{Infinitely Growable Polygons}
\label{sec:infty-grow-hip}

In this section we show that a rational polygon is infinitely growable if and only if it is equivalent to a subset of the strip \([0,1] \times \RR\).
We work in greater generality; instead of \(\frac1r\ZZ^2\) and \(\ZZ^2\) we consider a lattice \(L\) and sublattice \(K \subseteq L\) both of rank \(d\).
For a rank \(d\) lattice \(K\) we denote \(K \otimes_\ZZ \RR \cong \RR^d\) by \(K_\RR\).
We think of both \(K\) and \(L\) as sublattices of the Euclidean space \(K_\RR = \RR^d\) where \(K\) is the integral lattice \(\ZZ^d\) and \(L\) is a bigger lattice extended by rational points.

For fixed lattices \(L\) and \(K\) recall that there exists a basis \(\ell_1, \dots, \ell_d\) of \(L\) and unique positive integers \(a_1, \dots, a_d\) with \(a_i \mid a_{i+1}\) for \(i=1, \dots, d-1\) such that \(\kappa_1 \coloneqq a_1\ell_1, \dots, \kappa_d \coloneqq a_d\ell_d\) is a basis of \(K\).
The \(a_i\) are usually referred to as \emph{elementary divisors} (or \emph{invariant factors}) of the sublattice \(K \subseteq L\). Notice that \(a_i\ge 1\) for all \(i=1, \dots, d\) as \(K\) has full rank.
Furthermore, observe that we have a chain of sublattices \(K \subseteq L \subseteq \frac1{a_d}K\) where \(\frac1{a_d}K\) is the sublattice of \(L_\RR\) generated by the basis \(\frac1{a_d}\ell_1, \dots, \frac1{a_d}\ell_d\).
The definition of \(\frac1{a_d}K\) is independent of the choice of basis.

Let us recall some important concepts from \cite{LattProj} with adjusted notation which allows us to consider multiple lattices.

\begin{definition}
Let \(N\) be a lattice and \(P\) a polytope in \(N_\RR\).
We call \(P\) an \emph{\(N\)-polytope} if the vertices of \(P\) are contained in \(N\).
We say that \(P\) is \emph{\(N\)-hollow} if it doesn't contain any points of \(N\) in its interior.
For \(s \in \ZZ_{>0}\), we say a convex body \(P \subseteq N_\RR\) is \emph{\(s\)-\(N\)-hollow} if the interior of \(P\) doesn't contain any points from the dilated lattice \(sN\).
We say that \(P\) is \emph{of \(N\)-size \(k\)} if \(|P \cap N| = k\).
\end{definition}
The definition of infinitely growable and finitely growable extend naturally to this setting by considering \(L\)-polytopes instead of polytopes with denominator \(r\) and \(K\)-size instead of size.

A \emph{lattice projection} is a surjective affine-linear map \(\varphi\colon N_\RR \to V\) onto a vector space \(V\) of dimension \(m\) whose kernel is generated, as a vector space, by elements of \(N\).
Given a lattice projection \(\varphi\) from \(L_\RR=K_\RR\) we get a chain of sublattices 
\[
\varphi(K) \subseteq \varphi(L) \subseteq \varphi\left(\tfrac{1}{a_d}K\right) = \tfrac{1}{a_d}\varphi(K).
\]
We adapt the following result from \cite[Theorem~2.1]{LattProj}.
\begin{proposition}\label{prop:hollow}
    There are only finitely many \(K\)-hollow, \(d\)-dimensional \(L\)-polytopes that do not admit a lattice projection \(\varphi\) to a \((d-1)\)-dimensional \(\varphi(K)\)-hollow polytope.
\end{proposition}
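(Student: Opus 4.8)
The plan is to reduce the statement to the single-lattice result \cite[Theorem~2.1]{LattProj} by working with respect to the finest of the three lattices, namely $M \coloneqq \frac{1}{a_d}K$, together with the dilation parameter $s = a_d$. The whole reduction rests on the identity $a_d M = a_d\cdot\frac{1}{a_d}K = K$, which lets me re-express the two defining conditions of the Proposition (being $K$-hollow, and admitting no projection onto a $\varphi(K)$-hollow polytope) purely in terms of $M$ and $a_d$, i.e.\ exactly in the form to which the cited theorem applies.

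First I would set up the dictionary between the two settings. Since $L \subseteq M$, every $d$-dimensional $L$-polytope $P$ is in particular a $d$-dimensional $M$-polytope, so it suffices to bound the $M$-polytopes. Next, $P$ is $K$-hollow exactly when $\interior(P)\cap K = \emptyset$; using $K = a_d M$, this says $\interior(P)\cap a_dM = \emptyset$, i.e.\ that $P$ is $a_d$-$M$-hollow. Finally, for any lattice projection $\varphi$ the image $\varphi(P)$ is $\varphi(K)$-hollow iff $\interior(\varphi(P))\cap \varphi(K)=\emptyset$, and since $\varphi(K)=\varphi(a_dM)=a_d\varphi(M)$ this is precisely the condition that $\varphi(P)$ be $a_d$-$\varphi(M)$-hollow. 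Hence $P$ fails to admit a lattice projection onto a $(d-1)$-dimensional $\varphi(K)$-hollow polytope if and only if it fails to admit one onto a $(d-1)$-dimensional $a_d$-$\varphi(M)$-hollow polytope.

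With this dictionary in place, the $K$-hollow $d$-dimensional $L$-polytopes without the required projection form a subset of the $a_d$-$M$-hollow $d$-dimensional $M$-polytopes admitting no projection onto a $(d-1)$-dimensional $a_d$-$\varphi(M)$-hollow polytope, and the latter set is finite by \cite[Theorem~2.1]{LattProj} applied with $N=M$ and $s=a_d$; finiteness of the subset then follows. Because the reduction imports the finiteness wholesale, I expect no additional geometric difficulty: the main point requiring care is simply that the three conditions translate exactly, which is the content of the identities $a_dM=K$ and $a_d\varphi(M)=\varphi(K)$. The two accompanying checks are that ``lattice projection'' is unambiguous along the chain $K\subseteq L\subseteq M$ --- it is, since these lattices are commensurable and hence span the same rational subspaces, so a kernel generated by lattice vectors of one is generated by lattice vectors of each --- and that the equivalence under which one counts descends correctly to the pair $(K,L)$, which is inherited from the finite list furnished by \cite[Theorem~2.1]{LattProj}.
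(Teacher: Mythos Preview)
Your proposal is correct and follows essentially the same approach as the paper: both pass to the finer lattice $M=\frac{1}{a_d}K$, use $K=a_dM$ to rewrite $K$-hollow as $a_d$-$M$-hollow (and likewise for the projection), and then invoke \cite[Theorem~2.1]{LattProj}. Your write-up is simply more explicit about the dictionary and the auxiliary checks, but the underlying argument is the same.
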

\begin{proof}
    Any \(L\)-polytope is also a \(\frac{1}{a_d}K\)-polytope, so it suffices to show the result in the case where \(L=\frac{1}{a_d}K\).
    By replacing \(K\)-hollow with \(a_d\)-\(\frac{1}{a_d}K\)-hollow and \(\varphi(K)\)-hollow with \(a_d\)-\(\frac{1}{a_d}\varphi(K)\)-hollow this is exactly \cite[Theorem~2.1]{LattProj}.
\end{proof}

\begin{proposition}\label{prop:grow_inf_hollow}
    An infinitely growable \(L\)-polytope \(P \subseteq L_\RR\) is \(K\)-hollow and it admits a lattice projection \(\varphi\) to a \((d-1)\)-dimensional \(\varphi(K)\)-hollow polytope.
\end{proposition}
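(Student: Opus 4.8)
The plan is to prove the two assertions --- that $P$ is $K$-hollow and that it admits the stated lattice projection --- in that order, deducing the second from the first together with Proposition~\ref{prop:hollow}. Throughout I may assume $P$ is $d$-dimensional (otherwise it has empty interior and is trivially $K$-hollow, and this degenerate case can be handled separately). Fix a witnessing sequence $P = P_0 \subsetneq P_1 \subsetneq \cdots$ of $L$-polytopes of $K$-size $k$ and set $P_\infty = \bigcup_i P_i$, a convex subset of $K_\RR$. Two preliminary observations drive everything. First, the $K$-points are \emph{frozen}: since $P_i \cap K \subseteq P_{i+1}\cap K$ with both of size $k$, all the sets $P_i \cap K$ coincide, whence $|P_\infty \cap K| = k$. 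Second, $P_\infty$ is unbounded: were it bounded, $P_\infty \cap L$ would be a finite slice of the discrete lattice $L$, and since each $P_i$ is the convex hull of a subset of its vertices, which lie in $P_\infty \cap L$, there would be only finitely many distinct $P_i$, contradicting the strict inclusions.

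For $K$-hollowness, suppose toward a contradiction that some $p \in \interior(P) \cap K$. Then $p \in \interior(P_\infty)$, so $B(p,\epsilon) \subseteq P_\infty$ for some $\epsilon > 0$. As $\overline{P_\infty}$ is an unbounded closed convex set it has a nonzero recession direction $v$, and since $p$ is interior the whole solid half-cylinder $B(p,\epsilon) + \RR_{\ge 0} v$ lies in $\interior(P_\infty) \subseteq P_\infty$. The key point is then the lattice-geometric claim that this half-cylinder contains infinitely many points of $K$. Because its axis is the ray $p + \RR_{\ge 0} v$ emanating from the lattice point $p \in K$, this reduces to showing that the torus distance $\|n v \bmod K\|$ in $K_\RR/K$ is less than $\epsilon$ for infinitely many $n \in \ZZ_{>0}$: each such $n$ yields $m_n \in K$ with $|nv - m_n| < \epsilon$, and then $q_n \coloneqq p + m_n \in K$ lies in the half-cylinder with $|q_n - p| \ge n|v| - \epsilon \to \infty$, so the $q_n$ furnish infinitely many distinct $K$-points of $P_\infty$, contradicting $|P_\infty \cap K| = k$.

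The recurrence statement itself is where the real work sits, and it is the step I expect to be the main obstacle to make fully rigorous. The closed subgroup $\overline{\langle v \bmod K\rangle}$ of the compact torus $K_\RR/K$ contains $0$; if it is finite then $v$ is rational over $K$ and infinitely many multiples $nv$ lie in $K$ itself, while if it is infinite the forward orbit $\{nv \bmod K : n > 0\}$ is dense in it and hence meets every neighbourhood of $0$ infinitely often. Either way $\|nv \bmod K\| < \epsilon$ for infinitely many $n$, as required; a soft pigeonhole (Poincar\'e recurrence) argument yields the same conclusion. This proves $P$ is $K$-hollow, and the argument applies verbatim to every $P_i$, since each $P_i$ is itself infinitely growable, as witnessed by the tail $P_i \subsetneq P_{i+1} \subsetneq \cdots$.

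Finally I turn to the projection. The polytopes $P_0, P_1, P_2, \dots$ are thus infinitely many $K$-hollow $d$-dimensional $L$-polytopes, and since a strict inclusion of full-dimensional convex bodies strictly increases volume, the $\Vol(P_i)$ are strictly increasing; as unimodular equivalence preserves volume, the $P_i$ are pairwise inequivalent, representing infinitely many equivalence classes. Proposition~\ref{prop:hollow} therefore forces all but finitely many of them to admit a lattice projection to a $(d-1)$-dimensional $\varphi(K)$-hollow polytope. Choosing such an index $j$ with its projection $\varphi$, one has $P = P_0 \subseteq P_j$ and hence $\varphi(P) \subseteq \varphi(P_j)$; since $\varphi$ has one-dimensional kernel and $P$ is $d$-dimensional, $\varphi(P)$ is $(d-1)$-dimensional, and because any point of $\interior(\varphi(P))$ has a neighbourhood inside $\varphi(P) \subseteq \varphi(P_j)$ we get $\interior(\varphi(P)) \subseteq \interior(\varphi(P_j))$, so the $\varphi(K)$-hollowness of $\varphi(P_j)$ passes to $\varphi(P)$. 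Thus $P$ admits the required lattice projection, which completes the proof.
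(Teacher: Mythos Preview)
Your proof is correct and follows essentially the same strategy as the paper's: derive $K$-hollowness by taking a nonzero recession direction of $\overline{\bigcup_i P_i}$ and using lattice approximation along that direction to produce infinitely many $K$-points near an interior $K$-point, then apply Proposition~\ref{prop:hollow} to the sequence $(P_i)$ and restrict the resulting projection to $P$. The only cosmetic differences are that the paper cites Dirichlet's approximation theorem (Theorem~\ref{thm:Dirichlet}) where you invoke torus recurrence, and that you are a bit more explicit than the paper about why the $P_i$ represent infinitely many equivalence classes (via strictly increasing volume).
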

Towards a proof of Proposition~\ref{prop:grow_inf_hollow}, recall that the Euclidean space \(\RR^d\) can be equipped with the maximum norm which associates to \(x \in \RR^d\) the maximal absolute value of its coordinates, i.e., \(\|x\|_\infty = \max_i |x_i|\).
We will use the following well-known result (see, for instance, \cite[Theorem~1B]{Schmidt}).
\begin{theorem}[Dirichlet's approximation theorem]\label{thm:Dirichlet}
	For every \(w \in \RR^d\) and every \(n > 1\) there exist \(k \in \ZZ\) with \(1 \le k \le n^d\) and \(x \in \ZZ^d\) such that \(\|kw - x\|_\infty < 1/n\).
	In other words, for every ray in \(\RR^d\) there exist lattice points that lie arbitrarily close to it.
\end{theorem}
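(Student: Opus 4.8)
The plan is to prove this by a pigeonhole argument applied to the fractional parts of the integer multiples of \(w\). For an integer \(j \ge 0\), write \(\{jw\} \in [0,1)^d\) for the vector whose entries are the fractional parts of the coordinates of \(jw\), so that \(\{jw\} = jw - \floor{jw}\) with \(\floor{jw} \in \ZZ^d\) denoting the componentwise floor. By construction every such point lies in the half-open unit cube \([0,1)^d\), which is the feature that makes the counting work.

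First I would partition \([0,1)^d\) into \(n^d\) congruent half-open boxes of side length \(1/n\), namely the products \(\prod_{i=1}^d [c_i/n,(c_i+1)/n)\) with \(c_i \in \{0,1,\dots,n-1\}\). Consider the \(n^d+1\) points \(\{jw\}\) for \(j=0,1,\dots,n^d\). Since there are more sample points than boxes, the pigeonhole principle yields indices \(0 \le j_1 < j_2 \le n^d\) for which \(\{j_1 w\}\) and \(\{j_2 w\}\) lie in a common box. As that box has side \(1/n\) and is half-open, the two points differ in each coordinate by strictly less than \(1/n\), so \(\|\{j_2 w\} - \{j_1 w\}\|_\infty < 1/n\).

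It then remains to read off the claimed \(k\) and \(x\). Setting \(k = j_2 - j_1\) gives \(1 \le k \le n^d\), and putting \(x = \floor{j_2 w} - \floor{j_1 w} \in \ZZ^d\) one computes \(\{j_2 w\} - \{j_1 w\} = (j_2 w - \floor{j_2 w}) - (j_1 w - \floor{j_1 w}) = kw - x\). Hence \(\|kw - x\|_\infty < 1/n\), as required. The geometric reformulation is then immediate: letting \(n \to \infty\) forces the points \(kw\) lying on the ray spanned by \(w\) to come within \(1/n\) of the lattice point \(x\), so lattice points lie arbitrarily close to the ray.

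The argument is essentially routine, so I expect no genuine obstacle. The only points that require care are the bookkeeping in the pigeonhole step (taking exactly \(n^d+1\) sample points against \(n^d\) boxes, which is what guarantees the sharp bound \(k \le n^d\)) and the use of \emph{half-open} boxes, which is what upgrades the conclusion from a non-strict to the strict inequality \(\|kw-x\|_\infty < 1/n\).
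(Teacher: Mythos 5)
The paper gives no proof of this theorem at all: it is quoted as a known result with a citation to \cite{Schmidt}, so there is no internal argument to compare yours against. Your proof is the classical pigeonhole argument (essentially the textbook proof of Dirichlet's theorem), and for \emph{integer} \(n\) it is complete and correct: the count of \(n^d+1\) sample points against \(n^d\) half-open boxes is exactly right, and it yields both the bound \(k \le n^d\) and the strict inequality \(\|kw-x\|_\infty < 1/n\).

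The one genuine caveat is that the statement, following Schmidt, quantifies over every real \(n>1\), whereas your construction tacitly assumes \(n\) is an integer: the boxes \(\prod_{i=1}^d [c_i/n,(c_i+1)/n)\) with \(c_i \in \{0,1,\dots,n-1\}\) tile \([0,1)^d\), and the sample points \(j=0,1,\dots,n^d\) make sense, only when \(n\in\ZZ\). This cannot be repaired by rounding: running your argument with \(\lfloor n\rfloor\) gives only \(\|kw-x\|_\infty < 1/\lfloor n\rfloor\), which is weaker than \(<1/n\), while running it with \(\lceil n\rceil\) gives the right approximation quality but allows \(k\) as large as \(\lceil n\rceil^d > n^d\). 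The statement sits exactly at the boundary where the box count balances the approximation quality, so the non-integer case needs a different tool; the cleanest is Minkowski's theorem on linear forms applied to the \(d+1\) forms \(q\) and \(qw_i - x_i\) with bounds \(n^d\) and \(1/n\), whose product is \(1\). That said, the gap is immaterial for this paper: the only consequence ever used (in the proof of Proposition~\ref{prop:grow_inf_hollow}) is the reformulation in the second sentence of the theorem --- that lattice points lie arbitrarily close to every ray --- and that already follows from the integer case you proved by letting \(n\to\infty\) through the integers.
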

Also, recall the \emph{recession cone} (or \emph{tail cone}) of a nonempty closed convex set \(C \subseteq \RR^d\) is defined as
\[
\rec(C) \coloneqq \{u \in \RR^d \mid u+C \subseteq C\}.
\]
\begin{proof}[Proof of Proposition \ref{prop:grow_inf_hollow}]
    Assume towards a contradiction that there exists \(v \in \interior(P) \cap K\).
    Since \(P\) can be grown infinitely, there exists a strictly increasing sequence \((P_i)_{i \in \NN}\) of \(L\)-polytopes that have \(K\)-size \(|P_i \cap K| = |P \cap K|\) such that \(P \subseteq P_i\) and \(P_i \subsetneq P_{i+1}\).	
    The closure of the union of the \(P_i\), that is
    \[
    C \coloneqq \overline{\bigcup_iP_i},
    \]
    is an unbounded, closed, convex set.
    Hence, its recession cone is non-trivial, i.e. there exists a non-zero point \(w\) in \(\rec(C)\).
    By Theorem~\ref{thm:Dirichlet}, there exist lattice points in \(K\) that lie arbitrarily close to the ray \(\RR_{\ge0}w\).
    Since \(v \in K\), there also exist lattice points in \(K\) that lie arbitrarily close to the affine ray \(\rho \coloneqq v + \RR_{\ge0}w\).
    Notice that either \(\rho\) contains a second lattice point of \(K\) or \(v\) is the only lattice point of \(K\) that is contained in \(\rho\).
    In both cases we can conclude that there are infinitely many lattice points in \(K\) that are contained in the interior of \(C\).
    However, \(\interior(C) \subseteq \bigcup_i P_i\), and thus the sizes of the \(P_i\) become arbitrarily large.
    This is a contradiction which proves that \(P\) is \(K\)-hollow.
	
    Each of the \(P_i\) is also infinitely growable using the sequence of polytopes \((P_j)_{j\geq i}\) so the \(P_i\) are also hollow.
    By Proposition~\ref{prop:hollow} there is an \(i \in \NN\) such that \(P_i\) admits a lattice projection \(\varphi\) to a \((d-1)\)-dimensional \(\varphi(K)\)-hollow polytope.
    Since \(P\subseteq P_i\) this projection also takes \(P\) to a \((d-1)\)-dimensional \(\varphi(K)\)-hollow polytope.
\end{proof}

From now on we return to the setting where \(K=\ZZ^2\) and \(L=\frac1r\ZZ^2\).

\begin{corollary}\label{cor:dim-2-grow-inf}
    Let \(P\) be a rational polygon with denominator \(r\) and size \(k\).
    Then \(P\) is infinitely growable if and only if it is equivalent to a subset of the strip \([0,1] \times \RR\) of the plane.
\end{corollary}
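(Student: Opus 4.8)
The plan is to prove both implications separately, handling the forward direction with the general machinery of Section~\ref{sec:infty-grow-hip} and the reverse direction by an explicit growth construction.

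For the forward implication, suppose \(P\) is infinitely growable. Being a polygon, \(P\) is full-dimensional, so \(d=2\) and Proposition~\ref{prop:grow_inf_hollow} applies: \(P\) is \(\ZZ^2\)-hollow and admits a lattice projection \(\varphi\colon\RR^2\to V\) onto a line \(V\) whose image \(\varphi(P)\) is a \(1\)-dimensional \(\varphi(\ZZ^2)\)-hollow polytope. I would then unwind this concretely. The kernel of \(\varphi\) meets \(\ZZ^2\) in a rank-\(1\) sublattice; letting \(e\) be a primitive generator and identifying \(V\cong\RR\) so that \(\varphi(\ZZ^2)=\ZZ\), any \(f\in\ZZ^2\) with \(\varphi(f)=1\) yields a basis \((e,f)\) of \(\ZZ^2\) in which \(\varphi\) is the projection onto the second coordinate. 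The \(\varphi(\ZZ^2)\)-hollow condition says the (nondegenerate) interval \(\varphi(P)\) contains no integer in its interior, hence lies in \([m,m+1]\) for some \(m\in\ZZ\). Translating by \(-m\hspace{0.5pt}f\) and swapping the two basis vectors — both unimodular operations — carries \(P\) into \([0,1]\times\RR\), so \(P\) is equivalent to a subset of the strip.

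For the reverse implication I would argue by direct construction, using the standing assumption \(r\ge 2\) (so that the strip contains a lattice point of \(\tfrac1r\ZZ^2\) with \(x\)-coordinate in \((0,1)\)). Assume \(P\subseteq[0,1]\times\RR\) has denominator \(r\) and size \(k\). The only lattice lines meeting the strip are \(x=0\) and \(x=1\), so every point of \(P\cap\ZZ^2\) lies on one of these, and no lattice point has \(x\)-coordinate in \((0,1)\). The key observation is that the cross-sections \(P\cap(\{0\}\times\RR)\) and \(P\cap(\{1\}\times\RR)\) are unchanged when one adjoins a point whose \(x\)-coordinate lies strictly inside \((0,1)\): any convex combination using such a point with positive weight has \(x\)-coordinate in \((0,1)\), hence cannot reach \(x=0\) or \(x=1\). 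I would then set \(P_0=P\) and \(P_{i+1}=\conv\!\mleft(P_i\cup\{(1/r,\,y_{i+1})\}\mright)\), where \(1/r\in(0,1)\cap\tfrac1r\ZZ\) and each \(y_{i+1}\in\tfrac1r\ZZ\) is chosen larger than the maximal \(y\)-coordinate on \(P_i\). Then every \(P_i\) is a denominator-\(r\) polygon inside the strip with \(P_i\subsetneq P_{i+1}\); by the cross-section observation (iterated), \(P_i\cap(\{0,1\}\times\RR)\) agrees with that of \(P\), and since the new vertices sit at \(x=1/r\notin\ZZ\) they add no point of \(\ZZ^2\). Hence \(|P_i\cap\ZZ^2|=k\) for all \(i\), and \((P_i)\) witnesses that \(P\) is infinitely growable.

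The forward direction is essentially an application of Proposition~\ref{prop:grow_inf_hollow}, so the only work there is the bookkeeping that converts the abstract ``\(\varphi(\ZZ^2)\)-hollow \(1\)-dimensional image'' into the concrete strip via a unimodular change of coordinates, which I expect to be routine. The part requiring the most care is the reverse construction: making the cross-section invariance precise and checking that it persists under iteration, so that the polygons genuinely keep size \(k\) while strictly increasing. This is where I expect the main obstacle to lie, together with the observation that the argument genuinely needs an interior column \(x=1/r\), and hence \(r\ge 2\).
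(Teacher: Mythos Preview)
Your proposal is correct and follows essentially the same route as the paper: the forward direction invokes Proposition~\ref{prop:grow_inf_hollow} and unwinds the resulting lattice projection into a unimodular change of coordinates, while the reverse direction grows \(P\) by adjoining points on the column \(x=\tfrac1r\). Your reverse construction is in fact slightly more robust than the paper's: by iterating \(P_{i+1}=\conv(P_i\cup\{(1/r,y_{i+1})\})\) rather than taking \(P_i=\conv(P,v_i)\) for a single varying point, you get the strict nesting \(P_i\subsetneq P_{i+1}\) for free and do not need \(P\) itself to meet the line \(x=\tfrac1r\); your explicit cross-section argument at \(x\in\{0,1\}\) is also more detailed than the paper's appeal to ``sufficiently large \(a\)''. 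Your remark that the construction genuinely needs \(r\ge 2\) is apt and matches the paper's standing hypothesis.
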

\begin{proof}
Suppose \(\varphi\) is an affine map such that \(\varphi(P)\) is a subset of \([0,1] \times \RR\).
Then for a sufficiently large integer \(a\), the polygons \(P_i\coloneqq\conv(P,\varphi^{-1}(\frac1r,\frac{a+i}{r}))\) for \(i=0,1,\dots\) form an infinite sequence of denominator \(r\) size \(k\) polygons realising \(P\) as infinitely growable.

Now suppose \(P\) is infinitely growable.
By Proposition~\ref{prop:grow_inf_hollow} there is a lattice projection \(\varphi: \RR^2 \to \RR\) such that \(\varphi(P)\) is \(\varphi(\ZZ^2)\)-hollow.
By a scaling we may assume that \(\varphi(\ZZ^2)=\ZZ\) so in other words \(\varphi(P)\) is a subset of an interval \([a,a+1]\) for some integer \(a\).
The map \(\varphi\) is equal to the map induced by the dual lattice vector \((\varphi(1,0), \varphi(0,1))\).
By a change of basis we may assume that \(\varphi(0,1) = 0\) (and \(\varphi(1,0) \neq 0\)).
Then, the fact that \(\varphi(P) \subseteq [a,a+1]\) can be reinterpreted as: the \(x\)-coordinates of \(P\) are in the range \(\left[\frac{a}{\varphi(1,0)},\frac{a+1}{\varphi(1,0)}\right]\).
By a translation, we may assume that the \(x\)-coordinates of \(P\) are in \([0,1]\).
\end{proof}

\section{Growing Finitely Growable Polygons}
\label{sec:growing_alg}

We classify the finitely growable polygons of small size and denominator using a growing algorithm.
This is done by finding a collection of minimal polygons from which all others can be obtained by successive adding of points.

\begin{definition}
    A polygon \(P\) with denominator \(r\) and size \(k\) is called \emph{minimal} if, for each vertex \(v\) of \(P\), the polygon
    \[
    \conv\left(\left(P \cap \tfrac1r\ZZ^2\right)\setminus\{v\}\right)
    \]
    either has size less than \(k\) or is less than two-dimensional.
\end{definition}

The growing algorithm is based on the following result.

\begin{proposition}\label{prop:growing_alg}
    Let \(P\) be a polygon with denominator \(r\) and size \(k\).
    Then there exists a sequence of polygons \(P_0, P_1, \dots, P_s\) with denominator \(r\) and size \(k\) such that \(P_0\) is minimal, \(P_s=P\) and for all \(i=0,\dots,s-1\):
    \begin{enumerate}
        \item \(P_{i+1} = \conv(P_i, v_i)\), for some point \(v_i\) in \(\frac1r\ZZ^2\),
        \item there is exactly one more lattice point in \(rP_{i+1}\) than in \(rP_i\) (that is \(rv_i\)) and
        \item \(v_i\) is contained in a hyperplane defined by \(u_i \cdot x = h_i+\frac1r\), where \(u_i\) is the outwards pointing normal vector of a facet of \(P_i\) and that facet is a subset of the hyperplane \(u_i \cdot x = h_i\).
    \end{enumerate}
\end{proposition}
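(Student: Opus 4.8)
The plan is to build the sequence backwards from $P$ down to a minimal polygon, peeling off one lattice point at a time. The key observation is that the three conditions are really saying: I want to write $P$ as the result of repeatedly adding a single "new" lattice point (in $rP$) to a smaller denominator-$r$, size-$k$ polygon, where each added point sits just beyond a facet of the current polygon. Since requesting $P_0$ minimal at the bottom and $P_s = P$ at the top, it is natural to reverse-engineer the sequence.

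First I would set up the reverse step. Suppose $Q$ is a denominator-$r$, size-$k$ polygon that is \emph{not} minimal. By definition of minimal, there is a vertex $v$ of $Q$ such that $Q' := \conv\bigl((Q \cap \tfrac1r\ZZ^2)\setminus\{v\}\bigr)$ is still two-dimensional and still has size $k$. I would like to take $Q'$ as the previous polygon in the sequence, but this may remove \emph{several} lattice points of $rQ$ at once (condition~(2) demands exactly one). So the real reverse step must be more careful: rather than delete a vertex outright, I would remove a single point $w = rv_i$ of $rQ$ lying on the boundary of $rQ$, chosen among the "extremal" lattice points, so that $\conv\bigl((Q\cap\tfrac1r\ZZ^2)\setminus\{\tfrac1r w\}\bigr)$ drops exactly one point of $rQ$. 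The candidate $w$ should be a vertex of the lattice polygon $rQ$, or more precisely a point whose removal strictly shrinks the hull by exactly that point.

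The heart of the argument, and where conditions~(2) and~(3) get enforced, is a \textbf{termination/measure} argument combined with a geometric positioning claim. For termination I would use the number of lattice points of $rP$, namely $|rP \cap \ZZ^2| = |P \cap \tfrac1r\ZZ^2|$, as a strictly decreasing nonnegative integer quantity: each reverse step removes exactly one such point, so after finitely many steps I reach a polygon from which no point can be removed without dropping below size $k$ or below dimension two — that is, a minimal polygon, which becomes $P_0$. Reversing the order of the constructed chain then yields $P_0, P_1, \dots, P_s = P$ satisfying~(1) and~(2). For condition~(3), I would verify that the single removed point $v_i = \tfrac1r w$ can always be chosen as a vertex of the lattice polygon $rP_{i+1}$ that lies \emph{just beyond} a facet of $P_i$: writing the supporting facet of $P_i$ as $\{x : u_i \cdot x = h_i\}$ with $u_i$ the outward primitive normal and $rh_i \in \ZZ$, the next lattice layer of $rP_{i+1}$ outward sits on $u_i \cdot x = h_i + \tfrac1r$ exactly when $w$ is the unique new lattice point gained, so $v_i$ lies on that hyperplane. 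Ensuring $u_i$ is genuinely the normal of a \emph{facet} of $P_i$ (and not of a lower-dimensional face) is what requires the removed point to be extremal in a direction transverse to an existing edge.

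\textbf{The main obstacle} I anticipate is the simultaneous fulfillment of conditions~(2) and~(3): it is easy to shrink by deleting a whole vertex, but guaranteeing that I can always peel off lattice points \emph{one at a time} while keeping each removed point adjacent to a facet of the remainder is the delicate combinatorial-geometric point. Concretely, when I delete a vertex $v$ of $Q$ and the hull loses several lattice points, I must instead order those lost points and re-add them one at a time in a way compatible with the facet structure at each intermediate stage — essentially showing that the "gap" between $Q'$ and $Q$ can be filled by a monotone sequence of single-point convex extensions, each lying on the next integer layer past a current facet. I would handle this by choosing, at each reverse step, a lattice point of $rQ$ that is a vertex of $rQ$ and lies on a facet-defining hyperplane of $\conv(rQ \setminus \{w\})$ shifted outward by one lattice unit; the existence of such a vertex follows because any lattice polytope has at least one vertex whose removal leaves the convex hull supported by a hyperplane parallel to, and one step inside, a former facet. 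Making this selection precise, and checking it never forces a drop below dimension two prematurely, is the technical core of the proof.
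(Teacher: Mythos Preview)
Your backwards-construction strategy matches the paper's, but you have manufactured a difficulty that is not there. When $Q$ is not minimal and $v$ is a vertex witnessing this, setting $Q' = \conv\bigl((Q \cap \tfrac1r\ZZ^2) \setminus \{v\}\bigr)$ removes \emph{exactly one} point of $\tfrac1r\ZZ^2$ from $Q$, namely $v$ itself: every other $\tfrac1r\ZZ^2$-point of $Q$ lies in the set whose hull is $Q'$, and $Q' \subseteq Q$ so no new $\tfrac1r\ZZ^2$-points are acquired. Hence $|rQ' \cap \ZZ^2| = |rQ \cap \ZZ^2| - 1$ and condition~(2) is automatic. Your elaborate one-point-at-a-time peeling procedure, and the ``main obstacle'' you describe, are therefore unnecessary; the paper simply removes a vertex at each step and inducts.

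The genuine content is condition~(3), and here your sketch (``any lattice polytope has at least one vertex whose removal leaves the convex hull supported by a hyperplane parallel to, and one step inside, a former facet'') is too vague to count as an argument. The paper handles it as follows. Since $v \notin Q'$, some facet-defining half-space $\{x : u \cdot x \le h\}$ of $Q'$ excludes $v$, and the corresponding facet contains two adjacent points of $\tfrac1r\ZZ^2$. Working in the lattice $\tfrac1r\ZZ^2$, one invokes a Pick's-theorem lemma: any polygon containing a unit lattice segment on the line $u \cdot x = h$ together with a lattice point strictly beyond it must also contain a lattice point on the next parallel line $u \cdot x = h + \tfrac1r$. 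Applied to $Q = \conv(Q',v)$, this yields a $\tfrac1r\ZZ^2$-point of $Q$ on that adjacent hyperplane; but the only $\tfrac1r\ZZ^2$-point of $Q$ outside $Q'$ is $v$, so $v$ itself lies there. This Pick-based step is the idea you are missing.
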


We call a hyperplane like the one containing \(v_i\) in point (3) a \emph{hyperplane adjacent to a facet of \(P\)}.
To prove the proposition we need the following preparatory lemma which is a consequence of Pick's Theorem.
By \emph{lattice length} of a lattice line segment, we mean the number of lattice points it contains minus 1.

\begin{lemma}\label{lem:Picks_point_on_line}
    For lattice \(N\cong\ZZ^2\) let \(P\) be a rational polygon in \(N_\QQ\). 
    Suppose \(P\) contains a point of \(N\) in the interior of the half-space 
    \[
    H\coloneqq \{v \in N_\RR : u\cdot v \geq h\}
    \]
    where \(u\) is a primitive element of the dual space to \(N\).
    If \(P\) also contains a lattice line segment of lattice length 1 in the boundary of \(H\) then \(P\) contains a lattice point in the hyperplane \(\{v : u\cdot v = h+1\}\). 
\end{lemma}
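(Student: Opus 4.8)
The plan is to reduce the statement to a single triangle and then to an elementary divisibility fact. Since $P$ is convex and contains the two endpoints $a,b$ of the length-$1$ segment lying in $\partial H$, as well as a lattice point $w \in P \cap N$ with $u\cdot w > h$, it contains the triangle $T \coloneqq \conv(a,b,w)$. Hence it suffices to produce a lattice point of $T$ on the hyperplane $\{u\cdot v = h+1\}$. To set up coordinates, note first that $h = u\cdot a \in \ZZ$ because $a \in N$ and $u$ is primitive. Since $u$ is primitive I would complete it to a basis of the dual lattice and apply a unimodular transformation so that $u=(0,1)$; the layers $\{u\cdot v = j\}$ then become the horizontal lines $\{y=j\}$, and after an integral translation I may assume $h=0$. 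Because $[a,b]$ has lattice length $1$, the vector $b-a$ is primitive inside the layer $\{y=0\}$, so a further change of basis fixing this direction lets me take $a=(0,0)$ and $b=(1,0)$. Writing $w=(p,m)$ with $p\in\ZZ$, the hypothesis $w\in\interior(H)$ forces $m\ge 1$.

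Next I would slice $T$ by the line $\{y=1\}$. If $m=1$ then $w$ itself lies on this line and there is nothing to prove, so assume $m\ge 2$. Computing where the edges $aw$ and $bw$ meet $\{y=1\}$ shows that $T\cap\{y=1\}$ is the segment whose $x$-coordinate ranges over the closed interval $[\,p/m,\ (p+m-1)/m\,]$.

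The arithmetic heart of the argument is then the final step. This interval has length $(m-1)/m<1$, so it need not obviously contain an integer; the key observation is that among the $m$ consecutive integers $p,p+1,\dots,p+m-1$ exactly one, say $nm$, is divisible by $m$. The corresponding $n$ satisfies $p/m\le n\le (p+m-1)/m$, so $(n,1)$ is a lattice point of $T$ lying on $\{y=1\}$. Undoing the coordinate changes, this is a lattice point of $T\subseteq P$ in the hyperplane $\{u\cdot v = h+1\}$, as required.

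I expect this last step to be the only real obstacle: because the slice of the triangle at the intermediate layer is strictly shorter than a unit segment, the existence of a lattice point on it is not automatic and genuinely relies on the position of the endpoints, which the divisibility observation controls. This divisibility fact is precisely the arithmetic content behind viewing the lemma as a consequence of Pick's theorem (the area of $T$ being $m/2$), while everything else is routine reduction by unimodular transformations and translation.
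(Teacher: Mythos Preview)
Your proof is correct, but it follows a different path from the paper's. After the same unimodular normalisation to $a=(0,0)$, $b=(1,0)$, $w=(p,m)$, the paper argues iteratively via Pick's theorem: the triangle $T=\conv(a,b,w)$ has normalised volume $m>1$, hence contains a non-vertex lattice point $v_1$, necessarily with $0<y_1<m$; replacing $w$ by $v_1$ and repeating drives the height down to $1$. Your argument instead slices $T$ once at height $1$, computes the horizontal extent $[p/m,(p+m-1)/m]$ of that slice, and observes that among the $m$ consecutive integers $p,\dots,p+m-1$ exactly one is a multiple of $m$, producing the required lattice point directly. Your approach is more elementary---it avoids both Pick's theorem and the descent---and makes the arithmetic content explicit; the paper's approach is shorter to state and leans on a standard tool, at the cost of an inductive loop whose termination has to be checked. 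Either way the substance is the same: the base segment has lattice length exactly $1$, which is what forces a lattice point onto the first layer above it.
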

\begin{proof}
    By an affine transformation we may assume that \(u=(0,1)\) so that \(H\) is the set of points with non-negative \(y\)-coordinates.
    By a translation we may assume that \(P\) contains the line segment \(\conv((0,0),(1,0))\).
    Let \(v_0=(x_0,y_0)\) be the lattice point in \(P \cap H^\circ\).
    If \(y_0=1\) we are done, otherwise consider the lattice triangle \(T_0 = \conv((0,0),(1,0),v_0)\) contained in \(P\).
    This triangle has volume \(y_0>1\) so by Pick's theorem contains more than three lattice points.
    Let \(v_1=(x_1,y_1)\) be a lattice point in \(T_0\) which is not a vertex.
    If \(y_1=1\) we are done, otherwise replace \(v_0\) with \(v_1\) and repeat this process until we obtain a lattice point in \(P\) with \(y\)-coordinate 1.
    This process terminates since each successive vertex \(v_i\) has strictly smaller \(y\)-coordinate.
\end{proof}

\begin{proof}[Proof of Proposition~\ref{prop:growing_alg}]
If \(P\) is minimal we are done.
If \(P\) is not minimal let \(v_{s-1}\) be a vertex of \(P\) such that \(P_{s-1} \coloneqq \conv(P \cap \frac1r\ZZ^2 \setminus \{v_{s-1}\})\) is also a denominator \(r\) size \(k\) polygon.
It is immediate that \(P = \conv(P_{s-1},v_{s-1})\) and that there is exactly one more lattice point in \(rP\) than in \(rP_{s-1}\).
It remains to show that \(v_{s-1}\) is contained in a hyperplane adjacent to a facet of \(P_{s-1}\).
    
We can represent \(P_{s-1}\) as the intersection of a finite number of half-spaces, each corresponding to a facet of \(P_{s-1}\).
Since \(v_{s-1} \notin P_{s-1}\) one of these half-spaces does not contain \(v_{s-1}\).
Since a facet of \(P_{s-1}\) is contained in the boundary of this half-space \(P\) contains both a point in the half-space and a line segment in its boundary.
Therefore, by Lemma~\ref{lem:Picks_point_on_line} there is a point \(P\) which falls on a hyperplane adjacent to a facet of \(P_{s-1}\).
However, there is only one point of \(\frac1r\ZZ^2\) in \(P\) and not \(P_{s-1}\), that is \(v_{s-1}\).
This shows that \(v_{s-1}\) satisfies the conditions of the proposition.

If \(P_{s-1}\) is minimal we are done, otherwise continue by induction.
The process terminates since \(P\) contains a finite number of points of \(\frac1r\ZZ^2\).
\end{proof}

The algorithm will start with the list of minimal polygons and successively add points which lie on hyperplanes adjacent to each of their facets. 
Based on Proposition~\ref{prop:growing_alg} all finitely growable polygons will certainly occur in this manner.
However, we must also ensure that the algorithm terminates.

There are infinitely many points on each hyperplane so we need to bound the collection of points which we add to polygons at each growing step.
The following definition is useful for this.
\begin{definition}
    Let \(P \subseteq \RR^n\) be a polytope with vertices \(v_1,\dots,v_r\) and \(v \in \QQ\) a point.
    Define the \emph{penumbra} of \(v\) with respect to \(P\) to be
    \[
    \pen(P,v) \coloneqq v - \cone(P-v) = \left\{v - w : w = \sum_{i=1}^r \lambda_i(v_i-v), \lambda_i \in \RR_{\geq0}\right\}.
    \]
    Let \(\pen(P,w_1,\dots,w_r)\) denote the union of the affine cones \(\pen(P,w_1)\), \dots, \(\pen(P,w_r)\).
\end{definition}
See Figure~\ref{fig:penumbra} for an example of a penumbra.
\begin{proposition}
    A point \(x\) is in \(\pen(P,v)\) if and only if \(v\) is in \(\conv(P,x)\).
\end{proposition}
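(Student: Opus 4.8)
The plan is to translate the whole configuration so that the point $v$ sits at the origin; this simultaneously simplifies the penumbra and the convex hull, after which the statement becomes an elementary fact about a cone and a convex hull sharing the same generators $v_i - v$. Writing $u_i \coloneqq v_i - v$, the definition gives $\pen(P,v) = v - \cone(u_1,\dots,u_r)$, so after subtracting $v$ everywhere the claim reads: $x - v \in -\cone(u_1,\dots,u_r)$ if and only if $0 \in \conv(u_1,\dots,u_r,\,x-v)$. In this form the two sides are just two ways of recording nonnegative coefficients, and the entire proof is the bookkeeping that converts one coefficient system into the other.

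For the forward implication, suppose $x \in \pen(P,v)$, so that
\[
x = v - \sum_{i=1}^r \lambda_i (v_i - v), \qquad \lambda_i \geq 0.
\]
Put $\Lambda \coloneqq \sum_i \lambda_i \geq 0$ and rearrange to $(1+\Lambda)\,v = x + \sum_i \lambda_i v_i$. Dividing by $1+\Lambda > 0$ exhibits
\[
v = \frac{1}{1+\Lambda}\,x + \sum_{i=1}^r \frac{\lambda_i}{1+\Lambda}\,v_i,
\]
a combination of $x$ and the vertices of $P$ with nonnegative coefficients summing to $1$; hence $v \in \conv(P,x)$. This direction holds unconditionally.

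For the converse, suppose $v \in \conv(P,x)$, say $v = \mu_0 x + \sum_i \mu_i v_i$ with $\mu_0,\mu_i \geq 0$ and $\mu_0 + \sum_i \mu_i = 1$. Provided $\mu_0 > 0$, the identity $v - \sum_i \mu_i v_i = \mu_0 x$ lets me solve for $x$ as $x = v - \sum_i (\mu_i/\mu_0)(v_i - v)$, since $1 + \sum_i \mu_i/\mu_0 = 1/\mu_0$ makes the computation close up exactly; as each $\lambda_i \coloneqq \mu_i/\mu_0 \geq 0$, this certifies $x \in \pen(P,v)$. The one genuinely delicate point — and the step I would single out as the main obstacle — is the degenerate case $\mu_0 = 0$, which forces $v = \sum_i \mu_i v_i \in P$. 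Here the equivalence requires care: if $v$ lies in the interior of $P$ then $\pen(P,v) = \RR^n$ and both conditions hold for every $x$, but if $v$ lies on $\partial P$ the two sides can disagree (for a vertex $v$ one has $v \in \conv(P,x)$ for \emph{every} $x$, whereas $\pen(P,v)$ is only a proper cone). I would therefore record the hypothesis $v \notin P$, under which $\mu_0 = 0$ is impossible — any representation of $v$ that does not use $x$ would place $v$ in $P$ — so $\mu_0 > 0$ is automatic and the normalisation above finishes the proof. Since the proposition is only ever invoked for candidate points $v$ lying outside the current polygon, this hypothesis costs nothing.
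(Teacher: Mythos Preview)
Your argument is essentially the same as the paper's: the same rearrangement in both directions, the same normalisation by $1+\Lambda$ and by $\mu_0$. The only point of divergence is the degenerate case $\mu_0 = 0$, and here you are more careful than the paper. The paper asserts that $\mu_0 = 0$ forces $v \in P$ and then that $\pen(P,v) = \RR^n$; but as you correctly observe, this second claim is only valid when $v$ lies in the \emph{interior} of $P$. If $v \in \partial P$ (for instance $v$ a vertex), the cone $\cone(P-v)$ is a proper cone and the stated equivalence genuinely fails, exactly as your example shows. Your proposed remedy---imposing $v \notin P$, which is the only case the paper ever uses---is the clean fix, and under it your proof is complete.
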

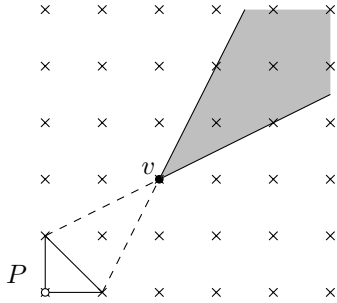
\begin{figure}[ht]
\centering
\begin{tikzpicture}[x=0.75cm,y=0.75cm]
\draw[] (0,0) -- (1,0) -- (0,1) -- cycle;
\node[draw,circle,inner sep=1pt,fill] at (2,2) {};
\fill[fill=gray!50] (5,3.5) -- (2,2) -- (3.5,5) -- (5,5) -- cycle;
\draw (2,2) -- (3.5,5);
\draw (2,2) -- (5,3.5);
\draw[dashed] (1,0) -- (2,2);
\draw[dashed] (0,1) -- (2,2);
\foreach \x in {0,...,5}{
	\foreach \y in {0,...,5}{
		\node[cross=2pt] at (\x,\y) {};
	}
}
\node[draw,circle,inner sep=1pt,fill=white] at (0,0) {};
\node[anchor=south] at (-0.5,0) {\(P\)};
\node[anchor=east] at (2.1,2.2) {\(v\)};
\end{tikzpicture}
\caption{For \(P=\conv((0,0),(1,0),(0,1))\) and \(v=(2,2)\) the shaded region is the penumbra of \(v\) with respect to \(P\), that is \(\pen(P,v) = (2,2) + \cone((2,1),(1,2))\).}
\label{fig:penumbra}
\end{figure}
\begin{proof}
    Let \(x \in \pen(P,v)\). By definition there exist non-negative rational numbers \(\lambda_1,\dots,\lambda_r\) such that
    \[
    x = v - \sum_{i=1}^r\lambda_i(v_i-v)
    \]
    which we can rearrange to write \(v\) as
    \[
    v = \frac{1}{1+\sum_{i=1}^r\lambda_i}\left( x + \sum_{i=1}^r \lambda_iv_i\right).
    \]
    This means that \(v\) meets the conditions to be an element of \(\conv(P,x)\). 
    Now let \(v \in \conv(P,x)\), then there exist non-negative rational numbers \(\mu_0,\dots,\mu_r\) such that \(\mu_0+\dots+\mu_r=1\) and 
    \[
    v = \mu_0x + \sum_{i=1}^r\mu_iv_i
    \]
    which we can rearrange to write \(x\) as
    \[
    x = \frac{1}{\mu_0}\left(v-\sum_{i=1}^r\mu_iv_i\right) = v - \sum_{i=1}^r \frac{\mu_i}{\mu_0}(v_i-v)
    \]
    if \(\mu_0 \neq 0\) so \(x \in \pen(P,v)\).
    If \(\mu_0=0\) then \(v \in P\) so \(\pen(P,v)\) is the whole space \(\RR^n\) and contains \(x\).
\end{proof}

Suppose we wish to add a point \(v\) to \(P\) on the hyperplane adjacent to a facet \(F\) of \(P\) in a way which satisfies the conditions of Proposition~\ref{prop:growing_alg}.
Since \(\conv(P,v)\) has exactly one additional point, it must contain no new points in the hyperplane \(H\) containing \(F\).
In particular, let \(x_1\) and \(x_2\) be two distinct points of \(\frac1r\ZZ^2\) in \(H\) and not \(F\), which are each as close to a vertex of \(F\) as possible.
Then \(v\) cannot be in \(\pen(P,x_1,x_2)\).
This restricts us to a finite collection of points \(v\) in the hyperplane adjacent to \(F\).
For an example of the points we can add see Figure~\ref{fig:growing_eg}.
We exclude lattice points to avoid increasing the size.

\begin{figure}[ht]
\centering
\begin{tikzpicture}[x=0.6cm,y=0.6cm]
    \draw (0,0) -- (1,0) -- (0,1) -- cycle;
    \fill[gray!50] (-1,0) -- (-6,0) -- (-6,-2) -- (-3,-2) -- cycle;
    \fill[gray!50] (2,0) -- (6,0) -- (6,-2) -- cycle;
    \draw[dashed] (0,0) -- (-1,0);
    \draw[dashed] (1,0) -- (2,0);
    \draw[dashed] (0,1) -- (-1,0);
    \draw[dashed] (0,1) -- (2,0);
    \draw (2,0) -- (6,0);
    \draw (2,0) -- (6,-2);
    \draw (-1,0) -- (-6,0);
    \draw (-1,0) -- (-3,-2);
    \draw[dotted] (-6,-1) -- (6,-1);
    \foreach \x in {-6,...,6}{
        \foreach \y in {-2,...,1}{
            \node[draw,circle,inner sep=0.1pt,fill] at (\x,\y) {};
        }
    }
    \node[anchor=south] at (-1,0) {\(x_2\)};
    \node[anchor=south] at (2,0) {\(x_1\)};
    \draw[fill] (-1,0) circle (1.5pt);
    \draw[fill] (2,0) circle (1.5pt);
    \foreach \x in {-1,...,3}{
        \draw[fill=white] (\x,-1) circle (1.7pt);
    }
    \foreach \x in {-6,-4,...,6}{
        \foreach \y in {-2,0}{
            \node[cross=2.5pt] at (\x,\y) {};
        }
    }
\end{tikzpicture}
\caption{The points which Algorithm~\ref{alg:growing} adds to \(P=\conv((0,0),(1/2,0),(0,1/2))\) on the hyperplane adjacent to \(\conv((0,0),(1/2,0))\).
Dots denote points of \(\frac12\ZZ^2\) and crosses denote points of \(\ZZ^2\). The points which we can add are denoted by circles and \(x_1=(1,0)\) and \(x_2=(-\frac12,0)\) are points of the hyperplane \(y=0\) which we may not include in the new polygon.}
\label{fig:growing_eg}
\end{figure}
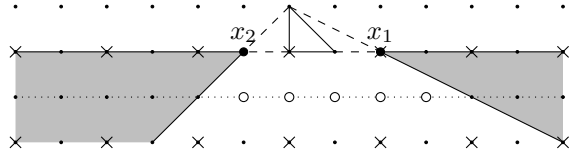

For the algorithm to terminate it must also have a finite number of polygons to classify.
In particular, there must be a finite number of finitely growable polygons of a given size and denominator.
\begin{proposition}\label{prop:hip_fin}
Let \(r \in \ZZ_{>0}\) and \(k \in \ZZ_{\geq0}\), then there are finitely many finitely growable polygons with denominator \(r\) and size \(k\).
\end{proposition}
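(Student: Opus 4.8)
The plan is to prove the stronger statement that, up to affine unimodular equivalence, every finitely growable denominator \(r\), size \(k\) polygon is contained in a single fixed box \([0,N]\times[0,N]\) with \(N = N(r,k)\). This suffices: a denominator \(r\) polygon is the convex hull of its points in \(\frac1r\ZZ^2\), the box contains only finitely many such points, and so there are only finitely many possibilities. Throughout I would use finite growability in the equivalent form supplied by Corollary~\ref{cor:dim-2-grow-inf}, namely that \(P\) is finitely growable exactly when it is \emph{not} equivalent to a subset of the strip \([0,1]\times\RR\).

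The first step is the observation that finite growability forces \(\width(P)\ge 1\) (lattice width with respect to \(\ZZ^2\)). Indeed, if \(\width(P)<1\), then after a unimodular change of basis carrying a width-minimising direction to \((0,1)\) and a translation placing the minimal \(y\)-coordinate at \(0\), we obtain \(P\subseteq\RR\times[0,\width(P)]\subseteq\RR\times[0,1]\), exhibiting \(P\) as a subset of a unit strip and hence, by Corollary~\ref{cor:dim-2-grow-inf}, as infinitely growable. This lower bound on the width is what rescues the volume estimates below from thin bodies that are wedged between two lattice lines and thus carry arbitrary volume while containing no lattice points.

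The heart of the argument is a reverse-flatness package bounding the size and shape of \(P\) in terms of \(k\). For a planar convex body with \(\width(P)\ge1\) I would establish two inequalities: first, a Blichfeldt-type lower bound \(\lvert P\cap\ZZ^2\rvert\ge c\,\Vol(P)\) (this is where \(\width(P)\ge1\) is essential); and second, \(\Vol(P)\ge c'\,w_1w_2\), where \(w_1\le w_2\) are the two successive minima of the lattice-width function of \(P\). Since \(\lvert P\cap\ZZ^2\rvert=k\) and \(1\le w_1=\width(P)\le w_2\), these combine to give \(w_1w_2\le Ck\), whence both successive minima are bounded by \(O(k)\). Passing to a unimodular basis adapted to these successive minima — effectively a reduced basis for \(P\) — squeezes \(P\), in the new coordinates, into a box whose side lengths are controlled by \(w_1\) and \(w_2\), hence by \(O(k)\). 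This is precisely the step that ``un-shears'' the long, thin, tilted examples into a common bounded region. The resulting box contains \(O(r^2k^2)\) points of \(\frac1r\ZZ^2\), and finiteness follows as in the first paragraph.

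The main obstacle I expect is the reverse-flatness package itself: proving the two volume inequalities with explicit, uniform constants and carrying out the basis reduction so that every bound depends only on \(r\) and \(k\). The Blichfeldt-type lower bound on \(\lvert P\cap\ZZ^2\rvert\), valid only once \(\width(P)\ge1\) has been secured, is the delicate point, since that inequality sits exactly at the boundary where a body can still be long and thin yet finitely growable. I would finally remark that when \(P\) has no interior lattice points it is \(\ZZ^2\)-hollow, and the whole argument can be bypassed: such a \(P\) admits no lattice projection onto a hollow interval (again by Corollary~\ref{cor:dim-2-grow-inf}, since otherwise it would lie in a unit strip), so its finiteness is immediate from Proposition~\ref{prop:hollow}.
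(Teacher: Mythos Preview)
Your plan breaks at the Blichfeldt step: the inequality \(|P\cap\ZZ^2|\ge c\,\Vol(P)\) is \emph{false} for planar convex bodies of lattice width \(\ge 1\). Take \(P_N=\conv\bigl((0,0),(0,1),(N,\tfrac12)\bigr)\). Its lattice width is exactly \(1\), attained in the direction \((0,1)\); every primitive \((a,b)\) with \(a\neq 0\) gives width at least \(N\). It contains only the two lattice points \((0,0)\) and \((0,1)\), yet its area is \(N/2\). No uniform \(c>0\) survives. This \(P_N\) sits in the strip \(\RR\times[0,1]\) and is therefore infinitely growable, so it does not contradict the proposition itself; but it shows that trading ``finitely growable'' for ``\(\width(P)\ge 1\)'' discards precisely the information your volume bound needs. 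In fact even that trade is illegitimate: the denominator~\(5\) square \([3/5,7/5]^2\) has lattice width \(4/5\) (the axis directions give width \(4/5\), every other primitive direction gives width \(\ge 8/5\)), yet it contains \((1,1)\) in its interior and hence is finitely growable by Proposition~\ref{prop:grow_inf_hollow}. Your ``translation placing the minimal \(y\)-coordinate at \(0\)'' would have to move by \(-3/5\), which is not an integral translation.

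The paper sidesteps all of this by splitting on hollowness. When \(P\) has an interior lattice point it invokes the Lagarias--Ziegler theorem, which bounds the volume of a lattice polytope in terms of the number of \emph{interior} lattice points, and then uses their second theorem to confine \(P\) to a square of bounded side. The presence of an interior lattice point is exactly what rules out thin triangles like \(P_N\), and is a genuinely stronger hypothesis than \(\width(P)\ge 1\). When \(P\) is hollow, the paper appeals directly to Proposition~\ref{prop:hollow}, exactly as your final paragraph does. So your hollow case is correct; for the non-hollow case you need a real volume bound such as Lagarias--Ziegler in place of the (false) width-based Blichfeldt inequality.
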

\begin{proof}
By Proposition~\ref{prop:hollow} it suffices to show that there are finitely many such polygons with interior lattice points.
By \cite[Theorem 1]{LagariasZiegler} the volume of a polygon containing exactly \(l \geq 1\) points of \(r \ZZ^2\) in its interior is at most \(lr^2(7(lr+1))^{16}\).
Therefore, the volume of a polygon containing at most \(k-3\) interior points is also bounded by some number \(V\).
By \cite[Theorem 2]{LagariasZiegler} this bound means that the finitely growable polygons with denominator \(r\) and size \(k\) are equivalent to polygons contained in a lattice square of side length at most \(4V\).
Therefore, there are finitely many of them.
\end{proof}

Note that a sufficiently small infinitely growable polygon may also be a subset of a maximal polygon.
Our growing algorithm cannot exclude infinitely growable polygons, since some minimal polygons are infinitely growable.
We still need to classify a finite set so we bound the collection of infinitely growable polygons which we include using the following result.
\begin{proposition}\label{prop:colinear_points_bound}
Let \(P\) be a rational polygon with denominator \(r\) and size \(k\).
Let \(H\) be a hyperplane defined by \(u \cdot x = \widetilde{h}\) for some non-integral \(\widetilde{h} \in \frac1r\ZZ\).
Let \(h\) be the minimum of \((r\widetilde{h} \mod r)\) and \((-r\widetilde{h} \mod r)\).
If
\begin{equation*}
    \mleft|P \cap H \cap \tfrac1r\ZZ^2\mright| \geq r(r-h+1)(k+1)
\end{equation*}
then \(P\) is not a subset of a maximal polygon.
\end{proposition}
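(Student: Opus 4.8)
The plan is to prove directly that no maximal polygon contains $P$. Suppose, for contradiction, that $P\subseteq Q$ for some maximal polygon $Q$. By definition $Q$ has denominator $r$ and size $k$, and since $P\subseteq Q$ we get $|Q\cap H\cap\tfrac1r\ZZ^2|\ge|P\cap H\cap\tfrac1r\ZZ^2|\ge r(r-h+1)(k+1)$. Thus it suffices to show that \emph{any} denominator $r$, size $k$ polygon $Q$ that contains this many points of $\tfrac1r\ZZ^2$ on $H$ can be grown, contradicting maximality. After replacing $u$ by a primitive normal vector and applying an affine unimodular transformation, a translation, and if necessary the reflection $y\mapsto-y$, I may assume $u=(0,1)$ and $H=\{y=h/r\}$ with $0<h\le r/2$, so that the nearest lattice line lies at distance $h/r$ below $H$. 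The points of $\tfrac1r\ZZ^2$ on $H$ are then spaced $1/r$ apart, so $Q$ contains a run $S$ of at least $r(r-h+1)(k+1)$ of them; equivalently $Q$ contains a horizontal segment at height $h/r$ of length $L\ge(r-h+1)(k+1)-\tfrac1r$.

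First I would confine $Q$ to two lattice rows. If $Q$ contained a lattice point $w$ whose $y$-coordinate is at most $-1$ or at least $2$, then the triangle $\conv(S\cup\{w\})\subseteq Q$ would, by linear interpolation of its horizontal slices between height $h/r$ (width $L$) and the height of $w$ (width $0$), contain at one of the rows $y=0$ or $y=1$ a horizontal slice whose length is a definite fraction of $L$; since $L$ exceeds $2k$, that slice would meet more than $k$ collinear lattice points, exceeding the size of $Q$. Hence every lattice point of $Q$ lies in the rows $y=0$ and $y=1$, and $Q\subseteq\{-1<y<2\}$. In particular the width of $Q$ at $y=0$ and at $y=1$ is at most $k+1<L$, so $Q$ is \emph{pinched}: it is widest near height $h/r$, and its right boundary at heights $0$ and $1$ lies strictly to the left of its right-hand tip on $H$.

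Next I would grow $Q$ by extending this tip. Let $v=(\tfrac{j_1+1}{r},\tfrac hr)$ be the first point of $\tfrac1r\ZZ^2$ on $H$ lying to the right of $S$. Since its second coordinate $h/r$ is not an integer, $v$ is not a lattice point, and by construction $v\notin Q$. The new region $\conv(Q,v)\setminus Q$ is contained in $\{-1<y<2\}$, so any lattice point it gains must lie in row $y=0$ or $y=1$. Writing $(b_0,0)$ and $(b_1,1)$ for the rightmost lattice points of $Q$ in these rows, I would bound the slopes of the two supporting lines of $Q$ through $v$ and show that the sliver $\conv(Q,v)\setminus Q$ stays strictly to the left of $(b_0+1,0)$ and of $(b_1+1,1)$, hence contains no lattice point. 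This is where the full hypothesis is used: the required estimate holds because $L\ge(r-h+1)(k+1)$, the factor $r$ converting the $1/r$-spacing on $H$ into Euclidean width, and the factor $(r-h+1)$ reflecting how close $H$ sits to the nearest lattice line (the smaller $h$ is, the nearer the tip is to the row $y=0$, and the longer the tip must be to extend safely). Granting this, $\conv(Q,v)$ is a denominator $r$, size $k$ polygon strictly containing $Q$, so $Q$ is not maximal, a contradiction.

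The main obstacle is precisely this final avoidance estimate: once $Q$ is pinched into the strip $\{-1<y<2\}$, I must quantify how far the sliver obtained by adding $v$ reaches into the rows $y=0$ and $y=1$, uniformly in the positions of the at most $k$ lattice points flanking the tip and in the distance $h/r$ from $H$ to the nearest lattice line. The confinement step is comparatively soft (a counting argument in the spirit of Lemma~\ref{lem:Picks_point_on_line}, needing only $L\gtrsim 2k$); it is the slope bookkeeping in the growth step that forces the stated bound $r(r-h+1)(k+1)$. I would expect the constant there to be generous rather than tight, which is all that is required, since the proposition is used only as a pruning criterion to keep the classification finite.
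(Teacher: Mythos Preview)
Your approach diverges from the paper's in a fundamental way, and the divergence is exactly where you flag the ``main obstacle.'' You try to contradict maximality of $Q$ by \emph{growing} it along $H$: add the next $\tfrac1r$-point $v$ past the tip and argue the sliver $\conv(Q,v)\setminus Q$ misses all lattice points. The paper instead contradicts the \emph{size} of $Q$. It uses the structural fact you never invoke: a maximal polygon is not infinitely growable, hence by Corollary~\ref{cor:dim-2-grow-inf} is not contained in any unit strip. In the paper's coordinates ($u=(1,0)$, $H=\{x=\widetilde h\}$ with $0<\widetilde h<1$) this means $Q$ has a $\tfrac1r\ZZ^2$-point $v_1$ with $x_1\ge 1+\tfrac1r$. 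One then projects the $a\ge r(r-h+1)(k+1)$ points of $P\cap H$ from $v_1$ onto the lattice line $x=1$; an intercept computation shows the images lie in $\tfrac{1}{r(r+1-r\widetilde h)}\ZZ$, so the segment $Q\cap\{x=1\}$ contains at least $k+1$ integer points, contradicting $|Q\cap\ZZ^2|=k$. No growth step, no sliver geometry, and the factor $r(r-h+1)$ falls out of the denominator of the projected sublattice rather than from any slope estimate.

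Your route is not obviously doomed, but it is genuinely incomplete, and in two places. First, the confinement step has a gap: from ``every \emph{lattice} point of $Q$ lies in rows $y=0,1$'' you jump to ``$Q\subseteq\{-1<y<2\}$,'' which does not follow, since $Q$ may have $\tfrac1r$-integral vertices with $y$-coordinate outside that range. Your slice argument can be rerun for an arbitrary point of $Q$, but then the inequalities become tight (for a point at height exactly $2$ the slice at $y=1$ has length roughly $L/(2-h/r)$, and $L>2k$ alone is not enough to force $k+1$ lattice points there). Second, and more seriously, the avoidance estimate you defer is the entire content of your argument: you must control the two tangent lines from $v$ to $Q$ well enough to keep the sliver off the rows $y=0$ and $y=1$, uniformly over all admissible shapes of $Q$ near the tip, and you offer only a heuristic for why the constant $r(r-h+1)(k+1)$ should suffice. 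The paper's escape-and-project argument sidesteps both issues entirely.
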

\begin{proof}
    We may assume by an affine map that \(u=(1,0)\) and that \(0 <\widetilde{h}<1\).
    Notice then that \(h=r\widetilde{h}\) or \(r-r\widetilde{h}\).
    The line segment \(P \cap H\) contains \(a\) points from \(\frac1r\ZZ^2\) for some integer \(a \geq r(r+1-h)(k+1)\) so the length of this line segment is at least \(\frac{a-1}r\).
    
    Assume towards a contradiction that there exists a maximal rational polygon \(Q\) with denominator \(r\) and size \(k\) containing \(P\).
    Since \(Q\) is not infinitely growable it contains a point \(v_1=(x_1,y_1)\) in \(\frac1r\ZZ^2\) outside of the strip \([0,1] \times \RR\).
    By a reflection in the \(y\)-axis followed by a horizontal translation we may assume that \(x_1\geq 1+\frac1r\).
    We claim that there exists a point of \(\frac1r\ZZ^2\) which is contained in the line segment \(Q\cap\{x=1+\frac1r\}\).

    Suppose that \(x_1\geq1+\frac2r\) and let \(T_1\) be the triangle \(\conv(P\cap H, v_1)\) which is contained in \(Q\).
    Applying the intercept theorem on the two triangles \(T_1\) and \(T_1\cap\{x\ge1+\frac1r\}\) yields that the length of the line segment \(T_1 \cap \{x=1+\frac1r\}\) is at least
    \begin{equation} \label{eq:length_line_seg_outside}
    \frac{a-1}{r}\cdot \frac{rx_1-r-1}{rx_1-r\widetilde{h}}.
    \end{equation}
    We claim that this lower bound is at least \(\frac1r\) which then guarantees that the line segment \(Q\cap\{x=1+\frac1r\}\) contains a rational point in \(\frac1r\ZZ^2\).
    Notice that the partial derivative of \eqref{eq:length_line_seg_outside} with respect to \(x_1\) is positive:
    \[
    \frac{\partial}{\partial x_1}\frac{a-1}{r}\cdot \frac{rx_1-r-1}{rx_1-h'} = \frac{(a-1)(r+1-h')}{r^2(rx_1-h')^2}>0
    \]
    so \eqref{eq:length_line_seg_outside} is increasing.
    Since \(x_1\ge 1+\frac2r\), a lower bound for the fraction \eqref{eq:length_line_seg_outside} is given by substituting in \(x_1=1+\frac2r\), that is,
    \[
    \frac{a-1}{r}\cdot \frac{rx_1-r-1}{rx_1-h'}\ge\frac{a-1}{r(r+2-r\widetilde{h})}.
    \]
    Thus it suffices to show that \((a-1)/(r(r+2-r\widetilde{h}))\ge\frac1r\).
    By definition \(a \geq r(r+1-h)(k+1)\) and \(k \geq 0\) so it further suffices to show that
    \[
    \frac{r(r+1-h)-1}{r+2-r\widetilde{h}}\ge 1.
    \]
    This can be verified using the facts that \(r\widetilde{h} \in \{h,r-h\}\) and \(h\le\frac r2\).
    This proves that \(Q\) contains a point of \(\frac1r\ZZ^2\) with \(x\)-coordinate \(1+\frac1r\).

    From now on, assume that \(v_1\) has \(x\)-coordinate \(x_1 = 1+\frac1r\).
    Let us denote the \(a\) consecutive points of \(P\cap H\cap \frac1r\ZZ^2\) by \(p_1, \dots, p_a\).
    Let \(q_i\) be the rational intersection points of the line segments \(\conv(v_1,p_i)\) with \(S = Q \cap \{x=1\}\).
    By computing the intersection of these line segments in general we see that the \(y\)-coordinate of \(q_i\) is in \(\frac{1}{r(r+1-r\widetilde{h})}\ZZ\).
    Thus, \(S\) contains at least \(r(r+1-h)(k+1)\) points of \(\frac{1}{r(r+1-r\widetilde{h})}\ZZ^2\).
    At least \(k+1\) of these must be integral points which contradicts the size of \(Q\).
\end{proof}

We make the growing algorithm rigorous in Algorithm~\ref{alg:growing}. 
We have shown that it starts with finitely many polygons and at each growing step adds finitely many additional polygons with strictly larger volume.
Additionally all polygons it produces are members of a finite set.
Therefore, the algorithm terminates and classifies all finitely growable polygons of a given size and denominator.

Three final adjustments can be made to make the process more efficient.
First notice that affine equivalent polygons contain the same number of points in \(\frac1r\ZZ^2\).
Also, our algorithm grows polygons by exactly one point of \(\frac1r\ZZ^2\) at a time.
Therefore, we can stratify the growing algorithm by the \(\frac1r\ZZ^2\)-size of the polygons.
That is, grow all polygons with \(\frac1r\ZZ^2\)-size \(k\) then \(k+1\) and so on.
The benefit of this is that once all polygons of a given \(\frac1r\ZZ^2\)-size have been classified, and all duplicates and infinitely growable polygons have been removed, they can be stored and deleted from working memory without any danger that we will classify them again in later iterations.

The next adjustment relates to removing infinitely growable polygons from the final classification.
Notice that a finitely growable polygon can never be grown into an infinitely growable polygon.
Thus, once we have checked and found that a polygon is finitely growable we need not check whether any polygon grown from it is also finitely growable.
To avoid this we distinguish between polygons which are infinitely growable and finitely growable throughout the main loop.

Finally, we will show in Section~\ref{sec:minimal_polygons} that a polygon with denominator \(r \geq 1\) and size \(k \geq 1\) contains a unique minimal polygon and two such polygons can only be equivalent if their minimal polygons are equivalent (see Remark~\ref{rem:only_one_min}).
As a result, when the size is at least 1 the algorithm grows each minimal polygon independently one after the other rather than simultaneously.
This reduces the number of polygons which need to be stored in working memory.

\begin{algorithm}
\DontPrintSemicolon

\SetKwFunction{MinPoly}{MinPoly}
\SetKwFunction{Final}{Final}
\SetKwFunction{ToGrowInf}{ToGrowInf}
\SetKwFunction{ToGrowFin}{ToGrowFin}
\SetKwFunction{ToGrowNextInf}{ToGrowNextInf}
\SetKwFunction{ToGrowNextFin}{ToGrowNextFin}
\SetKwFunction{size}{size}
\SetKwFunction{NewInf}{NewInf}
\SetKwFunction{NewFin}{NewFin}
\SetKwFunction{Grow}{Grow}
\SetKwFunction{IsFin}{IsFin}
\SetKwFor{Function}{function}{}{}

\KwData{The set \(\MinPoly\) containing minimal polygons with denominator \(r\) and size \(k\).}
\KwResult{The set \(\Final\) containing all finitely growable polygons with denominator \(r\) and size \(k\).}
\tcc{A function which grows a given polytope by exactly one point of \(\frac1r\ZZ^2\) in every way possible without increasing the size or adding too many colinear points}
\Function{\Grow{\(P\),\(r\),\(r\)-size,\IsFin}}{
    \(\NewInf \longleftarrow \emptyset\)\;
    \(\NewFin \longleftarrow \emptyset\)\;
    \For{\(p \in\{p \in  H \cap \frac1r\ZZ^2, p \notin \pen(P,x \in \frac1r\ZZ^2\setminus H)\), \(H\) a hyperplane adjacent to a facet of \(P\)\(\}\)}{
        \(Q \longleftarrow \conv(P,p)\)\;
        \If{\(Q\) has correct size, \(r\)-size and not too many colinear points according to Prop~\ref{prop:colinear_points_bound}}{
            \uIf{\IsFin or \(Q\) is finitely growable}{
            \(\NewFin \longleftarrow \NewFin \cup \{Q\}\)\;
            }
            \Else{
            \(\NewInf \longleftarrow \NewInf \cup \{Q\}\)\;
            }
        }
    }
    \If{\IsFin}{
        \textbf{return} \NewFin\;
    }
    \textbf{return} \NewInf, \NewFin\;
}
\(\Final \longleftarrow \emptyset\)\;
\tcc{Main growing loop, grows each minimal polygon iteratively until any point which can be added to a polygon increases it's size}
\For{\(P_{min} \in \MinPoly\)}{
    \uIf{\(k=0\)}{
        \(\ToGrowInf \longleftarrow \MinPoly\)\;
        \(\ToGrowFin \longleftarrow \emptyset\)\;
    }
    \uElseIf{\(P_{min}\) is infinitely growable}{
        \(\ToGrowInf \longleftarrow \{P_{min}\}\)\;
        \(\ToGrowFin \longleftarrow \emptyset\)\;
    }
    \Else{
        \(\ToGrowInf \longleftarrow \emptyset\)\;
        \(\ToGrowFin \longleftarrow \{P_{min}\}\)\;
    }
    \(r\)-size \(\longleftarrow |P \cap \frac1r\ZZ^2|\)\;
    \Repeat{\(\ToGrowFin = \emptyset\) and \(\ToGrowInf=\emptyset\)}{
        \(r\)-size \(\longleftarrow r\)-size\(+1\)\;
        \(\ToGrowNextInf \longleftarrow \emptyset\)\;
        \(\ToGrowNextFin \longleftarrow \emptyset\)\;
        \For{\(P \in \ToGrowInf\)}{
            \(\NewInf,\NewFin \longleftarrow \Grow(P,r,r\text{-size},\text{false})\)\;
            \(\ToGrowNextInf \longleftarrow \ToGrowNextInf \cup \NewInf\)\;
            \(\ToGrowNextFin \longleftarrow \ToGrowNextFin \cup \NewFin\)\;
        }
        \For{\(P \in \ToGrowFin\)}{
            \(\NewFin \longleftarrow \Grow(P,r,r\text{-size},\text{true})\)\;
            \(\ToGrowNextFin \longleftarrow \ToGrowNextFin \cup \NewFin\)\;
        }
        \(\Final \longleftarrow \Final \cup \ToGrowFin\)\;
        \(\ToGrowInf \longleftarrow \ToGrowNextInf\)\;
        \(\ToGrowFin \longleftarrow \ToGrowNextFin\)\;
    }
    \If{\(k=0\)}{
        \textbf{break}\;
    }
}
\caption{Growing algorithm for polygons with denominator \(r\) and size \(k\). All sets are considered modulo affine equivalence.}
\label{alg:growing}
\end{algorithm}

\section{Minimal polygons}
\label{sec:minimal_polygons}

In this section we classify the minimal polygons with denominator \(r > 1\) and size \(k\geq 0\).
Those with size \(0\) are a separate case which we classify first.

\begin{proposition}\label{prop:min_size_zero}
    The minimal polygons of size zero with denominator \(r\) are the size zero triangles of the form \(\frac1r(\Delta+v)\) where \(\Delta \coloneqq \conv((0,0),(1,0),(0,1))\) and \(v\) is a lattice point in the square \([0,r-1]^2\).
\end{proposition}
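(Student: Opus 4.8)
The plan is to unfold the minimality condition in the degenerate case \(k=0\), extract a purely combinatorial constraint on the point set \(S \coloneqq P\cap\frac1r\ZZ^2\), and then place the resulting object in normal form via the classification of empty lattice triangles. First I would note that for \(k=0\) the clause ``has size less than \(k\)'' in the definition of minimal is vacuous, since no polygon has negative size; hence \(P\) is minimal exactly when, for every vertex \(v\) of \(P\), the set \(\conv(S\setminus\{v\})\) is at most one-dimensional. Because \(P\) has denominator \(r\), its vertices lie in \(\frac1r\ZZ^2\) and \(P=\conv(S)\), so \(S\) is a finite set whose convex hull is the two-dimensional polygon \(P\).

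Next I would establish the combinatorial core: if \(\conv(S)\) is two-dimensional and \(S\setminus\{v\}\) is collinear for every vertex \(v\) of \(\conv(S)\), then \(S\) consists of exactly three non-collinear points. If \(\conv(S)\) had four or more vertices, deleting one would leave at least three vertices of a convex polygon, which can never be collinear. So \(\conv(S)\) is a triangle with vertices \(v_1,v_2,v_3\); and if \(S\) contained a further point \(p\), then deleting \(v_1\) would force \(p\) onto the line through \(v_2,v_3\) while deleting \(v_2\) would force \(p\) onto the line through \(v_1,v_3\), and these two lines meet only in \(v_3\), forcing \(p=v_3\), a contradiction. Thus \(S=\{v_1,v_2,v_3\}\) and \(P\) is a triangle whose only points of \(\frac1r\ZZ^2\) are its three vertices.

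I would then pass to \(rP\), a lattice triangle whose only lattice points \(rP\cap\ZZ^2=rS\) are its vertices, i.e.\ an empty lattice triangle; by the standard classification (equivalently, by Pick's theorem it has area \(\tfrac12\)) such a triangle is unimodular and hence \(\GL_2(\ZZ)\ltimes\ZZ^2\)-equivalent to \(\Delta\). Applying the linear part of this equivalence to \(P\) and then translating \(P\) by a suitable element of \(\ZZ^2\) (which shifts \(rP\) by an element of \(r\ZZ^2\)), I can reduce to \(rP=\Delta+v\) with \(v\in[0,r-1]^2\cap\ZZ^2\), so that \(P=\frac1r(\Delta+v)\); since equivalence preserves size, this representative has size zero and so appears in the asserted list. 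Conversely I would verify directly that each size-zero \(\frac1r(\Delta+v)\) is a minimal denominator-\(r\) polygon: it has denominator \(r\), its \(\frac1r\ZZ^2\)-points are precisely the vertices \(\frac1r v,\ \frac1r(v+(1,0)),\ \frac1r(v+(0,1))\) because \(\Delta+v\) is an empty lattice triangle, and deleting any vertex leaves two points spanning only a segment, so minimality is immediate.

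The main obstacle I anticipate is not a single hard estimate but the bookkeeping in the last step: one must check that the qualifier ``size zero'' genuinely prunes the list — for instance \(v=0\) yields \(\frac1r\Delta\), which contains the origin and hence has size one and must be discarded — and that reducing the translation modulo \(r\ZZ^2\) is compatible with the \(\GL_2(\ZZ)\) change of basis used to normalise the empty triangle. By contrast, the collapse of \(S\) to three points is clean once the \(k=0\) form of minimality has been correctly unfolded.
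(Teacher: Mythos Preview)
Your argument is correct and reaches the same conclusion as the paper, but the route to ``$P$ is a triangle meeting $\frac1r\ZZ^2$ only in its three vertices'' differs slightly. You argue directly from the minimality condition: with $k=0$ the size clause is vacuous, so deleting any vertex must collapse $\conv(S)$ to a line, and a short combinatorial argument then forces $|S|=3$. The paper instead takes any triangulation of $P$ into triangles empty with respect to $\frac1r\ZZ^2$, picks one such triangle $T\subseteq P$, and observes that any vertex of $P$ outside $T$ could be removed while keeping the result two-dimensional (since $T$ survives) and of size zero, contradicting minimality; hence $P=T$. Your approach is a touch more self-contained (no appeal to the existence of an empty triangulation), while the paper's sub-triangle trick generalises more readily to the size-$k>0$ case treated in the subsequent proposition. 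After this point both proofs coincide: Pick's theorem identifies $rP$ as a unimodular triangle, and reducing the translation modulo $r\ZZ^2$ puts $v$ into $[0,r-1]^2$. Your closing remarks about pruning by the size-zero condition and the compatibility of the $\GL_2(\ZZ)$ normalisation with translation reduction are well taken and match the paper's caveat that not every $\frac1r(\Delta+v)$ has size zero.
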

Note that not all triangles of the form \(\frac1r(\Delta+v)\) have size zero so the condition `size zero' in the proposition places restrictions on \(v\).
\begin{proof}
First we can see that these triangles are minimal since removing any of their vertices makes them one-dimensional.

Let \(P\) be a minimal polygon of size zero with denominator \(r\).
We can find a triangulation of \(P\) into triangles containing exactly three points of \(\frac1r\ZZ^2\).
Let \(T\) be one of these triangles.
Using Pick's theorem we may assume that \(T\) is some affine transformation of \(\frac1r\Delta\).
By an affine map, we may assume that \(T\) is \(\frac1r(\Delta+v)\) for some lattice point \(v\) in \([0,r-1]^2\).
Any vertex of \(P\) not contained in \(T\) can be removed without changing the size or dimension so \(P=T\).
\end{proof}

\begin{proposition}\label{prop:minimal_size_zero2}
    There is a bijection between the minimal polygons of size zero with denominator \(r\) and the set \(A\) of triples \((a_1,a_2,a_3) \in \ZZ^3_{\geq 0}\) with \(a_1\leq a_2 \leq a_3 \leq r-1\) satisfying one of the following conditions:
    \begin{enumerate}
        \item \(a_1+a_2+a_3=r-1\) and \(0 < a_1+a_2\) 
        \item \(a_1+a_2+a_3=2r-1\) and \(r \leq a_1+a_2\) 
    \end{enumerate}
\end{proposition}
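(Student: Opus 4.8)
The plan is to build on Proposition~\ref{prop:min_size_zero}: every minimal size-zero polygon is a triangle \(T_v \coloneqq \frac1r(\Delta+v)\) with \(v \in [0,r-1]^2 \cap \ZZ^2\), and I will attach to each such triangle a permutation-invariant triple of residues that descends to the required bijection. First I would record which \(v\) give size zero. An integral point of \(T_v\) is a \(P \in \ZZ^2\) with \(rP - v \in \Delta\); as \(rP-v\) is then an integral point of \(\Delta\) it must be a vertex, so \(T_v\) is size zero exactly when none of \(v\), \(v+(1,0)\), \(v+(0,1)\) lies in \(r\ZZ^2\), i.e. when \(v\) avoids \((0,0)\), \((r-1,0)\), \((0,r-1)\). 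This leaves \(r^2-3\) triangles to classify up to equivalence.

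Next I would describe affine equivalence between these triangles. For \(\phi(x)=Ax+w\) with \(A \in \GL_2(\ZZ)\) and \(w \in \ZZ^2\) one has \(\phi(T_v)=\frac1r(A\Delta+Av+rw)\), so \(\phi(T_v)=T_{v'}\) forces \(A\Delta\) to be a lattice translate of \(\Delta\). The matrices with this property are precisely the linear parts of the six affine automorphisms of \(\Delta\), a copy of \(S_3\); for each there is \(c_A\in\ZZ^2\) with \(A\Delta=\Delta+c_A\), and then \(T_v\cong T_{v'}\) if and only if \(v'\equiv Av+c_A \pmod r\) for one of these pairs. Classifying the triangles is therefore the same as describing the orbits of this \(S_3\)-action on \((\ZZ/r\ZZ)^2\).

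I would then introduce the invariant. To each edge of \(T_v\) associate the residue modulo \(r\) of the numerator of its level with respect to the primitive inner normal; for \(T_v\) the three residues attached to the edges opposite the vertices are \((v_1,\,v_2,\,(-1-v_1-v_2)\bmod r)\), and their sum is \(\equiv -1\pmod r\). Since they are read off from intrinsic edge data, their multiset is an affine invariant, and evaluating the action on the generators (the swap \(x\leftrightarrow y\) and a \(3\)-cycle of vertices) shows the \(S_3\)-action simply permutes the three residues and realises every permutation. Hence the sorted triple \(a_1\le a_2\le a_3\) is a complete invariant of the equivalence class. To match the image with \(A\), note each residue lies in \([0,r-1]\) and the sum, being \(\equiv -1\pmod r\), is \(r-1\) or \(2r-1\); writing \(s=v_1+v_2\), the third residue is \(r-1-s\) if \(s\le r-1\) and \(2r-1-s\) if \(s\ge r\). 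Reconstructing \(v\) from any ordered pair of a candidate triple, I would verify that every sum-\((r-1)\) triple occurs except \((0,0,r-1)\) --- the common image of the three forbidden \(v\) --- which is the content of the constraint \(0<a_1+a_2\) in~(1), while every sum-\((2r-1)\) triple occurs and automatically satisfies \(a_1+a_2=2r-1-a_3\ge r\), matching~(2).

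The main obstacle I expect is the equivalence step: showing that the only affine maps identifying two of these triangles arise from the six automorphisms of \(\Delta\), and that the induced action on the residues realises all of \(S_3\), so that the sorted triple is simultaneously well defined and class-separating. Once this is secured, the identification with \(A\) is bookkeeping, the only delicate checks being that the three forbidden \(v\) collapse to the single excluded triple \((0,0,r-1)\) and that the inequality in condition~(2) holds automatically.
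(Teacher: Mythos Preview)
Your proposal is correct and follows essentially the same approach as the paper: both attach to \(T_v\) the triple of residues \((v_1,\,v_2,\,(-1-v_1-v_2)\bmod r)\) coming from the levels of the three supporting half-spaces, and show that the sorted triple is a complete affine invariant landing in \(A\). The only difference is presentational: you make the \(S_3\)-action on \((\ZZ/r\ZZ)^2\) explicit and argue that it realises exactly the permutations of the triple, whereas the paper bypasses this by directly normalising any such triangle to the canonical representative \(\frac1r((h_1,h_2)+\Delta)\) after ordering \(h_1\le h_2\le h_3\), which simultaneously gives well-definedness and injectivity.
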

\begin{proof}
By Proposition~\ref{prop:min_size_zero} it suffices to show that there is a bijection between \(A\) and the set \(S\) of affine equivalence classes \([\frac1r(\Delta+v)]\) where \(v\) is a lattice point such that \(\frac1r(\Delta+v)\) contains no lattice points.

Let \(T \coloneqq \frac1r(\Delta+v)\).
We can write \(T\) as the intersection of three half-spaces \(H_i \coloneqq \{v \in \RR^2: u_i\cdot v \geq \widetilde{h_i}\}\) for \(i=1,2,3\) where each \(\widetilde{h_i} \in \frac1r\ZZ\).
We define \(h_i = (r\widetilde{h_i} \mod r)\) and, up to relabeling, we may assume that \(h_1 \leq h_2 \leq h_3\).
By a unimodular map we may assume that \(u_1=(1,0)\) and \(u_2=(0,1)\) and by an integral translation we may assume that \(r\widetilde{h_1}=h_1\) and \(r\widetilde{h_2}=h_2\).
This means that \(T\) is equivalent to \(\frac1r((h_1,h_2)+\Delta)\) and so we can calculate \(h_3\) in terms of \(h_1\) and \(h_2\).
That is, \(h_3=r-1-h_1-h_2\) when \(h_1+h_2 <r\) and \(h_3=2r-1-h_1-h_2\) otherwise.
Since \(T\) contains no lattice points \(h_1+h_2\) is always greater than \(0\).
This shows that \((h_1,h_2,h_3) \in A\).

We claim that the map from \(S\) to \(A\) taking the equivalence class \([T]\) to \((h_1,h_2,h_3)\) is the desired bijection.
This is a well-defined map as the triple \((h_1,h_2,h_3)\) is invariant under affine maps.
It remains to prove that it is injective and surjective.
As shown above, if \([T] \mapsto (h_1,h_2,h_3)\) then \(T\) is equivalent to the triangle \(\frac1r((h_1,h_2)+\Delta)\) from which we can deduce injectivity.
Surjectivity comes from the fact that for any \((a_1,a_2,a_3) \in A\) we have \([\frac1r((a_1,a_2)+\Delta)] \mapsto (a_1,a_2,a_3)\).
\end{proof}

\begin{proposition}\label{prop:class_of_minimal}
    The minimal polygons with denominator \(r\) and size \(k > 1\) are exactly the integral polygons of size \(k\) and the triangle
    \[
    T_{r,k}\coloneqq \conv\left((0,0),(k-1,0),\left(0,\tfrac{1}{r}\right)\right)
    \]
    and the only minimal polygon with denominator \(r\) and size \(k=1\) is the triangle 
    \[
    T_{r,1}\coloneqq \conv\left((0,0),\left(\tfrac{1}{r},0\right),\left(0,\tfrac{1}{r}\right)\right).
    \]
\end{proposition}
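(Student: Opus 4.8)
The plan is to prove both inclusions separately: that the polygons listed are minimal, and that a minimal polygon of size \(k\ge 1\) must be one of them.

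For the easy inclusion I would argue directly. A lattice polygon \(P\) of size \(k\) is minimal because each vertex \(v\) is itself a lattice point, so \(\conv((P\cap\tfrac1r\ZZ^2)\setminus\{v\})\) no longer contains \(v\) and hence has size at most \(k-1\). For \(T_{r,k}\) with \(k>1\) I would check the three vertices by hand: deleting the apex \((0,\tfrac1r)\) leaves only points with \(y=0\), collapsing the hull to dimension \(1\), whereas deleting either endpoint of the base \(\conv((0,0),(k-1,0))\) removes a lattice point and lowers the size to \(k-1\). The triangle \(T_{r,1}\) is handled the same way, noting that deleting any one of its three vertices leaves two collinear points.

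The heart of the converse is a single observation. If \(v\) is a non-lattice vertex of a minimal polygon \(P\), then every lattice point of \(P\) is distinct from \(v\) and therefore still belongs to \((P\cap\tfrac1r\ZZ^2)\setminus\{v\}\); consequently \(Q_v\coloneqq\conv((P\cap\tfrac1r\ZZ^2)\setminus\{v\})\) still contains all \(k\) lattice points and cannot have size smaller than \(k\). Minimality therefore forces \(Q_v\) to have dimension at most \(1\). Hence all \(\tfrac1r\ZZ^2\)-points of \(P\) other than \(v\) lie on a single line \(\ell\), and \(P\) is the triangle with apex \(v\) and base on \(\ell\); moreover \(v\) is the unique \(\tfrac1r\ZZ^2\)-point of \(P\) off \(\ell\), and all \(k\) lattice points of \(P\) lie on \(\ell\). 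In particular, if every vertex of \(P\) is a lattice point then \(P\) is simply a lattice polygon of size \(k\).

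Assume next that \(k\ge2\) and that \(P\) has a non-lattice vertex, so the triangle structure above applies. Since \(\ell\) carries at least two lattice points it is rational, and a unimodular transformation places \(\ell\) on the \(x\)-axis with its lattice points at integer coordinates and the apex at height \(\tfrac qr\) for some integer \(q\ge1\). The horizontal slice of \(P\) at height \(\tfrac1r\) has length \(\tfrac{q-1}{q}\) times the base length; for \(q\ge2\) this is at least \(\tfrac12\ge\tfrac1r\), so this slice, lying on the grid line \(y=\tfrac1r\), must contain a \(\tfrac1r\ZZ^2\)-point other than the apex, contradicting uniqueness. Thus \(q=1\). With the apex at height \(\tfrac1r\), deleting a non-lattice base endpoint would retract the base by exactly \(\tfrac1r\) without reaching the nearest lattice point, preserving both size and dimension and contradicting minimality; hence both base endpoints are lattice points. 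The base is then \(\conv((0,0),(k-1,0))\) after translation, and an integral shear fixing the \(x\)-axis carries the apex to \((0,\tfrac1r)\), giving \(P\cong T_{r,k}\).

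The case \(k=1\) is genuinely degenerate and I would treat it by counting the \(\tfrac1r\ZZ^2\)-points \(P_1,\dots,P_t\) of the base. If \(t\ge3\), then deleting either base endpoint leaves at least two collinear base points together with the apex, so the dimension does not drop; minimality then forces the unique lattice point to coincide with that endpoint, which cannot hold at both ends at once. Hence \(t=2\), so \(P\) has exactly three \(\tfrac1r\ZZ^2\)-points, namely its vertices, exactly one of which is a lattice point; such a triangle is \(\tfrac1r\) times a unimodular triangle and is equivalent to \(\tfrac1r\Delta=T_{r,1}\). I expect the main obstacle to be precisely this fiddly normalization — ruling out \(q\ge2\) and pinning the base endpoints to the lattice in the size-\({\ge}2\) case, together with the separate degenerate count when \(k=1\). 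The conceptual content, by contrast, is short: a non-lattice vertex can never be removed so as to lower the size, so its removal must lower the dimension, and this is what forces the triangular shape and drives the whole classification.
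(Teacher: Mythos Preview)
Your argument is correct, and the approach differs from the paper's. The paper begins by setting \(Q=\conv(P\cap\ZZ^2)\) and splitting on whether \(Q\) is two-dimensional (in which case any vertex of \(P\) outside \(Q\) may be removed without losing size or dimension, forcing \(P=Q\)) or a line segment; in the latter case it invokes Lemma~\ref{lem:Picks_point_on_line}, a Pick's-theorem consequence, to locate a \(\tfrac1r\ZZ^2\)-point at height \(\tfrac1r\), and then again removes redundant vertices to conclude \(P=T_{r,k}\). Your route instead fixes a non-lattice vertex \(v\) and observes that deleting it cannot decrease the size, so minimality forces all remaining \(\tfrac1r\ZZ^2\)-points onto a single line, immediately yielding the triangle shape. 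You then replace the paper's appeal to Lemma~\ref{lem:Picks_point_on_line} with a direct intercept estimate (the slice at height \(\tfrac1r\) has length at least \(\tfrac12\ge\tfrac1r\), hence meets the grid) to pin the apex height, followed by a separate minimality argument to force the base endpoints into \(\ZZ^2\). The paper's version is shorter because it reuses Lemma~\ref{lem:Picks_point_on_line} and never needs to examine the base endpoints individually --- once \((0,0)\), \((k-1,0)\), \((0,\tfrac1r)\) all lie in \(P\), any further vertex is removable; your version is self-contained and makes the role of each vertex more explicit. One small point worth stating for \(k=1\): before invoking ``the base'' you should note that a lattice polygon has size at least \(3\), so \(P\) necessarily has a non-lattice vertex and the triangle structure from the preceding paragraph applies.
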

\begin{proof}
    The integral polygons of size \(k\) are minimal since if we removed any of their vertices the result would have size \(k-1\).
    The triangle \(T_{r,k}\) is minimal for all \(k \geq 1\), since if we removed an integral vertex from it the result would have size \(k-1\) and if we removed a rational vertex the result would be a line.
    
    Suppose \(P\) is a minimal polygon with denominator \(r\) and size \(k > 1\).
    Let \(Q\) be the convex hull of the lattice points in \(P\).
    This is either a lattice polygon of size \(k\) or a line segment of lattice length \(k-1\).
    If \(Q\) is an lattice polygon of size \(k\) then any vertex of \(P\) not contained in \(Q\) can be removed without making the result a smaller size or dimension, therefore \(P=Q\) is a lattice polygon of size \(k\).
    If instead \(Q\) is a line segment of lattice length \(k-1\), then by an affine map we may assume that \(P\) contains the line segment between \((0,0)\) and \((k-1,0)\).
    By a reflection we may assume that \(P\) contains a \(\frac{1}{r}\)-integral point with positive \(y\)-coordinate.
    By Lemma~\ref{lem:Picks_point_on_line} \(P\) contains a point with \(y\)-coordinate \(\frac{1}{r}\) which by a shear we may assume is \((0,\frac{1}{r})\).
    Any point of \(P\) not contained in \(\conv((0,0), (k-1,0), (0,\frac{1}{r}))\) can be removed without making the result a smaller size or dimension, therefore \(P=\conv((0,0), (k-1,0), (0,\frac{1}{r}))\).
    
    If \(P\) is a minimal polygon with denominator \(r\) and size \(1\) an affine transformation allows us to assume it contains the points \((0,0)\) and \((\frac1r,0)\).
    A similar argument to above shows that \(P\) is equivalent to \(T_{r,1}\).
\end{proof}

\begin{remark}\label{rem:only_one_min}
Notice that the minimal polygon contained in a rational polygon of size at least 1 is entirely determined by the convex hull of the lattice points it contains.
As a result, two such polygons can only be equivalent if they can be grown from the same minimal polygon.
\end{remark}

To compute the minimal polygons, it remains to classify the lattice polygons of a given size.
Algorithms to do so have been presented elsewhere, for example in \cite{Class_conv_poly} and \cite{Latt_3_topes_few_pts}.
We use a very similar growing algorithm to the one described in Section~\ref{sec:growing_alg}, which adds points on hyperplanes adjacent to the facets of a polygon which is the approach mentioned in \cite{Latt_3_topes_few_pts}.

\section{Ehrhart Theory of Rational Polygons}
\label{sec:ehrhart}

As mentioned previously, the Ehrhart polynomial of a lattice polygon determines and is determined by the number of boundary and interior points of that polygon.
Thus, the following result gives a complete classification of the Ehrhart polynomials of lattice polygons.
\begin{theorem}[\cite{Scott,Haase_Schicho}]\label{thm:Scotts}
    Integers \(b\) and \(i\) are the number of boundary and interior points of a lattice polygon if and only if \(b \geq 3\), \(i \geq 0\) and one of the following holds
    \begin{itemize}
        \item \(i=0\),
        \item \(i=1\) and \(b = 9\) or
        \item \(i \geq 1 \) and \(b \leq 2i+6\).
    \end{itemize}
\end{theorem}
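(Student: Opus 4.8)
The plan is to prove both directions. The necessity of $b \ge 3$ and $i \ge 0$ is immediate (a full-dimensional lattice polygon has at least three vertices), so the content is, on one side, \emph{Scott's inequality} $b \le 2i+6$ together with its single exception, and on the other, the realisability of every admissible pair.

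For realisability I would exhibit explicit families. When $i=0$ the triangle $\conv\{(0,0),(b-2,0),(0,1)\}$ has exactly $b$ boundary points and, by Pick's theorem, no interior point, covering all $b\ge 3$. The exceptional pair $(b,i)=(9,1)$ is realised by $T_3\coloneqq\conv\{(0,0),(3,0),(0,3)\}$. For the remaining range $i\ge 1$, $3\le b\le 2i+6$, I would realise the two extremes and interpolate: $\conv\{(0,0),(2,0),(0,2i+2)\}$ has $i$ interior and $b=2i+6$ boundary points, while $\conv\{(0,0),(1,0),(2,2i+1)\}$ has all edges primitive, hence $b=3$, with $i$ interior points; intermediate values of $b$ are obtained by moving a single vertex one lattice step at a time, keeping $i$ fixed while $b$ varies. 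Verifying that this sweep meets every admissible $b$ is elementary but needs care.

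The substance is Scott's inequality. Since $i\ge 1$ forces $w\coloneqq\width(P)\ge 2$, a unimodular transformation places $P$ in the horizontal strip $0\le y\le w$ meeting both bounding lines. Writing $a_j$ for the number of lattice points of $P$ on the line $y=j$, each intermediate row $1\le j\le w-1$ contributes at most two boundary points (the endpoints of its slice, lying on the left and right boundary chains), while the extreme rows contribute $a_0$ and $a_w$; hence $b\le a_0+a_w+2(w-1)$. At the same time the interior points are exactly those in the intermediate rows strictly between the two chains, so $i=\sum_{j=1}^{w-1}(\text{interior points of row }j)$, which one estimates from the $a_j$ using Pick's theorem on subtriangles.

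The hard part is converting these crude counts into $b\le 2i+6$ and isolating the exception. For $T_3$ the estimate above gives $b\le a_0+a_w+2(w-1)=4+1+4=9=2i+7$, one more than the target; so the proof must show that a long bottom or top edge, or large width, forces proportionally many interior points, via the observation that a long edge together with a point at height $w$ spans a large triangle whose interior (by Pick) contains many lattice points. I would organise this as a case analysis on $w$ and on $a_0,a_w$, treating the thin configurations directly. The genuine obstacle is the borderline case $i=1$: here I would invoke the classical finiteness of lattice polygons with a single interior point and check the finite list by hand, which shows that $b\le 8$ in every case except $T_3$, where $b=9$. Pinning the exception to this one triangle, rather than to an unbounded family, is where the care is concentrated.
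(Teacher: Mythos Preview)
The paper does not prove this theorem: it is quoted as a known result with a citation to Scott and to Haase--Schicho, and the only accompanying remark is that the exceptional pair \((b,i)=(9,1)\) is realised uniquely by \(\conv((0,0),(3,0),(0,3))\). There is therefore no in-paper argument to compare your proposal against.

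As a standalone sketch, your outline has the right shape but is openly incomplete at the decisive step. The realisability half is essentially fine: your families for \(i=0\), for \((9,1)\), and for the two extremes \(b=3\) and \(b=2i+6\) when \(i\ge1\) all check out via Pick's theorem, and a one-vertex sweep between the extremes can be made rigorous (e.g.\ by sliding the top vertex of \(\conv\{(0,0),(2,0),(0,2i+2)\}\) along the line \(y=2i+2\) and then shortening the base), though you should actually write down the intermediate family rather than assert that ``interpolation'' works. The Scott-inequality half, however, is where the content lies, and you concede that the passage from \(b\le a_0+a_w+2(w-1)\) to \(b\le 2i+6\) is not carried out: the case analysis on width and on long edges is only gestured at, and for \(i=1\) you fall back on invoking an external finite classification. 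That is not a gap in strategy---Scott's own proof proceeds by a width-based case split of this kind---but as written it is a plan, not a proof. If you intend to supply a self-contained argument you will need to execute the case analysis explicitly; otherwise, citing Scott and Haase--Schicho as the paper does is the appropriate move.
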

The only polygon with one interior point and 9 boundary points is the triangle \(\conv((0,0),(3,0),(0,3))\).

Towards a generalisation of this result for rational polygons we first show that the Ehrhart polynomial of a rational polygon can be written in terms of numbers of boundary and interior points of polygons.

\begin{proposition}\label{prop:rat_Ehr_poly}
Let \(P\) be a rational polygon with Ehrhart quasi-polynomial
\[
\ehr_P(n) = a_{2,i}n^2 + a_{1,i}n + a_{0,i}\quad \text{when } n \equiv i \mod r
\]
for some positive integer \(r\) and \(i \in \{0,1,\dots, r-1\}\). Then 
\begin{align*}
    a_{2,i} & = \tfrac12\Vol(P)\\
    a_{1,i} & = \tfrac1r\left(|iP\cap \ZZ^2| - |(r-i)P^\circ \cap \ZZ^2| - \tfrac{r(2i - r)}{2}\Vol(P)\right)\\
    a_{2,i}i^2+a_{1,i}i+a_{0,i} &= |iP \cap \ZZ^2|
\end{align*}
for all \(i\) where \(\Vol(P)\) denotes the normalised volume of \(P\).
\end{proposition}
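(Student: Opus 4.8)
The plan is to read off the three coefficients \(a_{2,i}, a_{1,i}, a_{0,i}\) of each polynomial piece from three independent pieces of data: the asymptotic growth (giving the volume), a single evaluation of the counting function, and Ehrhart--Macdonald reciprocity. By Ehrhart's theorem \(\ehr_P(n) = |nP \cap \ZZ^2|\) agrees with a quasi-polynomial of degree \(2\) whose period divides \(r\), so for each residue \(i\) there are exactly three unknowns to determine, and the three displayed identities will pin them down.

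First I would dispatch the two straightforward identities. The leading coefficient of an Ehrhart quasi-polynomial equals the Euclidean volume of \(P\) in every residue class; since \(\Vol\) denotes the normalised volume, which in dimension two is twice the Euclidean area, this gives \(a_{2,i} = \tfrac12\Vol(P)\) uniformly in \(i\). The third identity is immediate: as \(0 \le i \le r-1\) we have \(i \equiv i \pmod r\), so substituting \(n = i\) into the \(i\)-th polynomial piece yields \(a_{2,i}i^2 + a_{1,i}i + a_{0,i} = \ehr_P(i) = |iP \cap \ZZ^2|\), with the usual convention \(\ehr_P(0)=1\) covering the case \(i=0\).

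The real content is the middle coefficient, which I would extract using reciprocity. For a two-dimensional polytope Ehrhart--Macdonald reciprocity reads \(|nP^\circ \cap \ZZ^2| = \ehr_P(-n)\), where the right-hand side means the quasi-polynomial extended to negative arguments via its polynomial pieces. Evaluating at \(n = r-i\) gives \(|(r-i)P^\circ \cap \ZZ^2| = \ehr_P(i-r)\), and because \(i-r \equiv i \pmod r\) the right-hand side is computed from the same \(i\)-th piece, namely \(a_{2,i}(i-r)^2 + a_{1,i}(i-r) + a_{0,i}\). Subtracting this from the evaluation at \(n=i\) cancels \(a_{0,i}\), while the coefficients of \(a_{2,i}\) and \(a_{1,i}\) simplify to \(r(2i-r)\) and \(r\) respectively; solving the resulting linear equation and inserting \(a_{2,i}=\tfrac12\Vol(P)\) produces precisely the claimed formula for \(a_{1,i}\).

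The only points requiring care are bookkeeping ones: confirming that \(i-r\) lands in the correct residue class so that the \(i\)-th piece is the one used, and checking the degenerate case \(i=0\) (where \(r-i=r\) and \(iP\) collapses to a point) to see the formula still holds. The single genuine ingredient is reciprocity; everything else is linear algebra in the three coefficients, so I expect no serious obstacle beyond keeping the residues straight.
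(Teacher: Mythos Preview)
Your proposal is correct and follows essentially the same route as the paper: identify the leading coefficient as the volume, evaluate the \(i\)-th piece at \(n=i\) for the third identity, and apply Ehrhart--Macdonald reciprocity at \(n=i-r\) (equivalently at \(r-i\)) to obtain a second linear relation from which \(a_{1,i}\) is extracted by subtraction. The only difference is that you add a remark about the degenerate case \(i=0\), which the paper does not single out.
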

\begin{remark}
    The case \(r=2\) was presented before in \cite[Lemma~3.3]{Herrmann}.
    We use the same method to extend this to the general quasi-period case.
\end{remark}
The proof depends on the following theorem.
\begin{theorem}[Ehrhart--Macdonald reciprocity \cite{Ehr_Mac_recip}]\label{thm:ehr_mac_recip}
    Let \(P\subseteq \RR^d\) be a rational polytope and let \(n\) be a positive integer.
    Then,
    \[
    (-1)^{\dim(P)}\ehr_P(-n) = |nP^\circ \cap \ZZ^d|
    \]
    where \(P^\circ\) is the relative interior of \(P\).
\end{theorem}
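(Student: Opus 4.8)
The plan is to prove Ehrhart--Macdonald reciprocity by homogenising \(P\) to a rational cone one dimension higher, establishing Stanley's reciprocity for the lattice-point generating function of that cone, and then transporting the resulting identity of rational functions back down to the Ehrhart quasi-polynomial. First I would lift \(P = \conv(v_1,\dots,v_m)\), with \(v_j \in \QQ^d\), to height one and form the \((d+1)\)-dimensional rational cone \(C = \cone((v_1,1),\dots,(v_m,1)) \subseteq \RR^{d+1}\) (clearing denominators so that the rays are spanned by primitive lattice points). By construction the lattice points of \(C\) at height \(n \ge 0\) are exactly the \((x,n)\) with \(x \in nP \cap \ZZ^d\), and the lattice points in the relative interior \(C^\circ\) at height \(n \ge 1\) are exactly those with \(x \in nP^\circ \cap \ZZ^d\); nothing here requires \(P\) to be integral, so the quasi-polynomial case is covered automatically. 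Encoding these counts in the integer-point transform \(\sigma_C(\mathbf z) = \sum_{\mathbf a \in C \cap \ZZ^{d+1}} \mathbf z^{\mathbf a}\) and specialising the first \(d\) variables to \(1\) while the height is tracked by a single variable \(z\), I recover the Ehrhart series \(\Ehr_P(z) = \sum_{n\ge0}\ehr_P(n) z^n\); the same specialisation applied to \(\sigma_{C^\circ}\) produces \(\sum_{n\ge1}|nP^\circ \cap \ZZ^d|\, z^n\). Since \(C\) is rational, \(\sigma_C\) is a rational function and so are both specialisations.

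The heart of the argument is Stanley reciprocity at the level of the cone, the identity of rational functions \(\sigma_{C^\circ}(\mathbf z) = (-1)^{d+1}\sigma_C(\mathbf z^{-1})\). I would prove this by triangulating \(C\) into simplicial subcones and using a consistent half-open (disjoint) decomposition, so that every lattice point of \(C\), and separately of \(C^\circ\), is counted exactly once with no boundary double-counting. This reduces the claim to a single simplicial cone \(K = \cone(w_1,\dots,w_{d+1})\) with primitive generators \(w_j \in \ZZ^{d+1}\). Tiling \(K\) by \(\ZZ_{\ge0}\)-translates of the half-open fundamental parallelepiped \(\Pi\) gives \(\sigma_K(\mathbf z) = \sigma_\Pi(\mathbf z)/\prod_j(1-\mathbf z^{w_j})\), and the analogous tiling of \(K^\circ\) by the opposite half-open parallelepiped \(\Pi^\circ\) gives \(\sigma_{K^\circ}(\mathbf z) = \sigma_{\Pi^\circ}(\mathbf z)/\prod_j(1-\mathbf z^{w_j})\). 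The lattice-point involution \(\mathbf a \mapsto (w_1+\dots+w_{d+1}) - \mathbf a\) carries \(\Pi \cap \ZZ^{d+1}\) bijectively onto \(\Pi^\circ \cap \ZZ^{d+1}\), whence \(\sigma_{\Pi^\circ}(\mathbf z) = \mathbf z^{w_1+\dots+w_{d+1}}\sigma_\Pi(\mathbf z^{-1})\); substituting this and rewriting each factor as \(1-\mathbf z^{-w_j} = -\mathbf z^{-w_j}(1-\mathbf z^{w_j})\) produces exactly the sign \((-1)^{d+1}\).

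With cone reciprocity established, I specialise both sides as above to obtain the identity of rational functions \(\sum_{n\ge1}|nP^\circ \cap \ZZ^d|\, z^n = (-1)^{d+1}\Ehr_P(1/z)\). To read off the polynomial statement I would invoke the elementary fact that the rational function \(\Ehr_P(w) = \sum_{n\ge0}\ehr_P(n) w^n\) admits a second Laurent expansion, valid about \(w=\infty\), equal to \(-\sum_{n\ge1}\ehr_P(-n)\, w^{-n}\); this is the quasi-polynomial form of the classical evaluation of \(\sum_n p(n) w^n\) at negative arguments and follows from partial fractions, using that \(\ehr_P\) is a quasi-polynomial of degree \(d\). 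Substituting \(w = 1/z\) turns the right-hand side into \(-\sum_{n\ge1}\ehr_P(-n)\, z^n\), so comparing coefficients of \(z^n\) yields \(|nP^\circ \cap \ZZ^d| = (-1)^{d+1}\bigl(-\ehr_P(-n)\bigr) = (-1)^d \ehr_P(-n)\) for all \(n \ge 1\); as \(d = \dim P\), this is precisely the asserted reciprocity.

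The main obstacle is the cone reciprocity of the second step, and within it the handling of interiors under triangulation: a naive inclusion-exclusion over a triangulation does not map interiors to interiors, and it is exactly the half-open decomposition that repairs this, so the real care lies in choosing the half-open structure coherently across all subcones of the triangulation. A secondary technical point is the specialisation \(z_1 = \dots = z_d = 1\): the multivariate rational function \(\sigma_C\) has poles along coordinate hyperplanes, so this substitution must be carried out along a generic one-parameter path \(z_j = z^{\lambda_j}\) avoiding those poles and then taking a limit. This is standard but must be stated to make the single-variable identities rigorous; once it is in place, the remaining steps are routine manipulations of rational functions.
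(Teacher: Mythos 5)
The paper does not prove Theorem~\ref{thm:ehr_mac_recip} at all: it is stated as a classical result with a citation and used as a black box in the proof of Proposition~\ref{prop:rat_Ehr_poly}. Your proposal reconstructs the standard proof from the literature (Stanley reciprocity via homogenisation, as in Beck--Robins or Stanley's original argument), and in outline it is sound: the lift of \(P\) to the cone \(C\), the identification of height-\(n\) lattice points of \(C\) and \(C^\circ\) with \(nP \cap \ZZ^d\) and \(nP^\circ \cap \ZZ^d\), the tiling of a simplicial cone by \(\ZZ_{\geq 0}\)-translates of the half-open fundamental parallelepiped, the involution \(\mathbf a \mapsto (w_1+\dots+w_{d+1})-\mathbf a\) exchanging the two half-open parallelepipeds, the sign bookkeeping from \(1-\mathbf z^{-w_j} = -\mathbf z^{-w_j}(1-\mathbf z^{w_j})\), and the final comparison of the two Laurent expansions of \(\Ehr_P\) (about \(0\) and about \(\infty\)) are all correct. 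Your remark that a naive inclusion--exclusion over a triangulation fails to track interiors, and that a coherently chosen half-open decomposition is what repairs this, identifies exactly the delicate point of the argument.

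Two caveats. First, as written your proof only covers full-dimensional \(P\): you take \(C\) to be \((d+1)\)-dimensional with \(d+1\) generators per simplicial subcone and land on the sign \((-1)^{d+1}\), whereas the statement involves \((-1)^{\dim(P)}\) and the \emph{relative} interior, precisely to accommodate \(\dim P < d\). The repair is routine but should be said: \(C\) then has dimension \(\dim(P)+1\), the parallelepiped and triangulation arguments run inside the linear span of \(C\), and the sign comes out \((-1)^{\dim(P)+1}\), giving \((-1)^{\dim(P)}\) after the final expansion step. (For the paper's application this is immaterial, since it is only invoked for two-dimensional polygons in \(\RR^2\).) Second, your worry about the specialisation \(z_1=\dots=z_d=1\) is unnecessary in this setting: every generator of \(C\), and of every subcone appearing in a triangulation of \(C\), has strictly positive last coordinate, so after specialisation each denominator factor becomes \(1-z^{n_j}\) with \(n_j \geq 1\), which is not identically zero; no limiting path is needed.
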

\begin{proof}[Proof of Proposition~\ref{prop:rat_Ehr_poly}]
The fact that \(a_{2,i}=\frac12\Vol(P)\) is a known result coming from considering the limit as \(n\) tends to infinity.
By the definition of \(\ehr_P(n)\), for \(i=0,1,\dots,r-1\)
\[
a_{2,i}i^2+a_{1,i}i+a_{0,i} = |iP \cap \ZZ^2|
\]
and by Ehrhart-Macdonald reciprocity
\[
a_{2,i}(i-r)^2+a_{1,i}(i-r)+a_{0,i} = |(r-i)P^\circ \cap \ZZ^2|.
\]
Taking the difference of these two equations and simplifying shows that 
\[
a_{1,i} = \frac1r\left(|iP\cap \ZZ^2| - |(r-i)P^\circ \cap \ZZ^2| - \frac{r(2i - r)}{2}\Vol(P)\right).
\]
\end{proof}
Since \(rP\) is a lattice polygon we can use Pick's theorem to write the volume of \(P\) in terms of the number of boundary and interior points of \(rP\).
Similarly to lattice polygons, this shows that the Ehrhart polynomial of a denominator \(r\) polygon is completely encoded by the number of boundary and interior points of \(P\), \(2P\), \dots, \((r-1)P\) and \(rP\).
Therefore, it suffices to study such numbers to understand the Ehrhart theory of rational polygons.

\section{Ehrhart Theory of Denominator 2 Polygons}

In this section we study the number of interior and boundary points in \(P\) and \(2P\) where \(P\) is a polygon with denominator 2.
Recall, Theorem~\ref{thm:hip_finlap_main} states that all but finitely many such polygons \(P\) must satisfy one of four collections of linear bounds on the number of interior and boundary points of \(P\) and \(2P\).
We prove this in Propositions~\ref{prop:inf_half_Ehr_1}, \ref{prop:inf_half_Ehr_2}, \ref{prop:ehr_diag_bound}, \ref{prop:ehr_h_v_bounds} and Table~\ref{tab:zero_int_pts_ehr}.

\subsection{Polygons With Zero Interior Points}
\subsubsection{Infinitely growable polygons}
The width of a polytope is an affine invariant which measures the number of integral hyperplanes it passes through.
The \emph{width of \(P\) with respect to \(u\)}, where \(u\) is a dual lattice vector, is defined to be
\[
\width_u(P) \coloneqq \max_{x \in P} \{u \cdot x\} - \min_{x \in P} \{u \cdot x\}.
\]
The \emph{(first) width} of \(P\) is defined to be 
\[
\width(P) = \min_{u \neq 0} \{\width_u(P)\}.
\]
We showed in Section~\ref{sec:infty-grow-hip} that infinitely growable rational polygons are those which are equivalent to a subset of the strip \([0,1] \times \RR\).
A consequence of this is that the width of an infinitely growable polygon is at most 1.
This helps us prove the following.
\begin{proposition} \label{prop:inf_half_Ehr_1}
    Let \(P\) be an infinitely growable, denominator 2 polygon.
    Let \(i_1\), \(b_1\), \(i_2\) and \(b_2\) be the number of interior and boundary points of \(P\) and \(2P\) respectively.
    Then \(i_1=0\) 
    and the remaining variables satisfy one of the following conditions:
    \begin{enumerate}
        \item \(i_2=0\) and \(b_2 \geq \max(3,2b_1)\),
        \item \(i_2>0\), \(b_1=0\) and \(b_2=4\) or
        \item \(i_2,b_1>0\), \(\max(3,2b_1) \leq b_2 \leq 2b_1+4\) and \(b_2 \leq 2i_2+6\).
    \end{enumerate}
\end{proposition}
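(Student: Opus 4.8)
The plan is to exploit the structural description of infinitely growable polygons from Corollary~\ref{cor:dim-2-grow-inf}: up to affine equivalence, $P$ lies in the strip $[0,1]\times\RR$, so $\width(P)\le 1$. I would first argue that this forces $i_1=0$: any interior lattice point of $P$ would require $P$ to properly contain integral hyperplanes in both coordinate directions, which is impossible inside a width-$1$ strip. The real content is then to analyze $2P$, which lives in the strip $[0,2]\times\RR$. Here the lattice points of $2P$ can only have $x$-coordinate $0$, $1$, or $2$, and so all boundary and interior structure of $2P$ is governed by how $P$ meets the three vertical lines $x=0$, $x=\tfrac12$, $x=1$. I would set up coordinates so that $P=\conv$ of points with $x\in\{0,\tfrac12,1\}$ and reduce to tracking the lattice-point counts on these three lines.

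**Case analysis on the structure of $P$.** The natural split is by where the boundary lattice points of $P$ (equivalently, lattice points of $P$, since $i_1=0$) sit. If $P$ has no lattice points on the lines $x=0$ or $x=1$ at all, we are in the regime leading to case (2): $b_1=0$, and $2P$ is then essentially a parallelogram or triangle meeting $x=1$ in an interval, giving $b_2=4$ with $i_2>0$ once $P$ is tall enough. Otherwise $P$ contains lattice points, so $b_1>0$, and I would compute $b_2$ as roughly twice the number of boundary lattice points of $P$ (each lattice edge of $P$ in the $y$-direction doubles its visible lattice points under the $n=2$ dilation), yielding the lower bound $b_2\ge\max(3,2b_1)$. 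The upper bounds $b_2\le 2b_1+4$ and $b_2\le 2i_2+6$ should come from counting the extra boundary points of $2P$ contributed by the line $x=1$ (at most $4$ new ones beyond the doubled count) and from a Scott-type inequality applied to the lattice polygon $2P$ itself.

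**The key inequalities.** The bound $b_2\le 2i_2+6$ is exactly Scott's inequality (Theorem~\ref{thm:Scotts}) applied to the lattice polygon $2P$: when $i_2\ge 1$ we have $b(2P)\le 2i(2P)+6$ directly, and when $i_2=0$ we land in case (1) with no upper bound needed. The bound $b_2\le 2b_1+4$ requires the geometric observation that doubling $P$ can create at most a bounded number of genuinely new boundary lattice points beyond those forced by the lattice points already on the two outer lines $x=0,1$; I expect to prove this by a direct count of how many lattice points $2P$ can gain on the line $x=1$ (which corresponds to the midline $x=\tfrac12$ of $P$), together with the vertices. The separation into whether $i_2=0$ or $i_2>0$ then distinguishes cases (1) from (3), with (2) as the degenerate $b_1=0$ subcase of $i_2>0$.

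**Main obstacle.** The hard part will be the upper bound $b_2\le 2b_1+4$ in case~(3), since it demands a careful accounting of which lattice points appear on the middle line $x=1$ of $2P$ and showing the surplus over $2b_1$ is uniformly bounded by $4$ regardless of how long and thin $P$ is. This is where the width-$1$ constraint must be used sharply: the three relevant vertical lines are fixed, so the only freedom is in the $y$-extent, and I would need to verify that extremal configurations (long thin parallelograms and triangles, as in the infinite families) realize the bound with equality without ever exceeding it. Managing the boundary-point bookkeeping so that edges shared between consecutive lines are not double-counted is the delicate routine step I would expect to consume most of the argument.
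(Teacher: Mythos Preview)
Your approach is essentially the paper's: place $P$ in the strip $[0,1]\times\RR$, deduce $i_1=0$, apply Scott's inequality to the lattice polygon $2P$, and count boundary points of $2P$ along the three vertical lines $x=0,1,2$. Two small points deserve attention.

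First, you write that ``when $i_2\ge 1$ we have $b(2P)\le 2i(2P)+6$ directly'' from Scott. That is not quite true: Theorem~\ref{thm:Scotts} has the exceptional case $(b_2,i_2)=(9,1)$, realised only by $\conv((0,0),(3,0),(0,3))$. The paper disposes of this by observing that this triangle has width $3$, whereas $2P$ has width at most $2$; you should insert the same one-line check.

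Second, the bound $b_2\le 2b_1+4$ that you flag as the main obstacle is in fact the easiest step once $i_2>0$. Since $2P$ has an interior lattice point on the line $x=1$, that line contributes at most two \emph{boundary} points to $2P$; and the lines $x=0$ and $x=2$ together contribute at most $2b_1+2$ boundary points (the lattice points of $P$ on $x=0$ and $x=1$, doubled, plus possibly one extra endpoint on each side). Summing gives $b_2\le 2b_1+4$ immediately, with no need to analyse extremal families. Your anticipated difficulty with ``long thin'' configurations does not arise, because the count on $x=1$ is capped at $2$ regardless of the height of $P$.
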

See Figure~\ref{fig:0_int} for plots of pairs \((b_2,i_2)\) when \(b_1,i_2\) and \(b_2\) are small.

\begin{figure}[b]
\centering
\subfloat[\(P\) has 0 boundary points]{
    \begin{tikzpicture}
    \begin{axis}[width=0.45\textwidth, axis lines=left, xmin=0, ymin=0, xlabel = {\(|\partial 2P \cap \ZZ^2|\)}, ylabel = {\(|2P^\circ \cap \ZZ^2|\)}]
    \addplot[only marks, mark=x, mark size=3.2pt]
    table{data/0_boundary_0_interior_inf.dat};
    \addplot[only marks, mark=*, mark size=1.5pt]
    table{data/0_boundary_0_interior_fin.dat};
    \addplot[densely dotted,color=black] coordinates {(4, 0) (4, 10)};
    \addplot[densely dotted,color=black] coordinates {(6, 0) (10,2)};
    \end{axis}
    \end{tikzpicture}
}
\subfloat[\(P\) has  1 boundary point]{
    \begin{tikzpicture}
    \begin{axis}[width=0.45\textwidth, axis lines=left, xmin=0, ymin=0, xlabel = {\(|\partial 2P \cap \ZZ^2|\)}, ylabel = {\(|2P^\circ \cap \ZZ^2|\)}]
    \addplot[only marks, mark=x, mark size=3.2pt]
    table{data/1_boundary_0_interior_inf.dat};
    \addplot[only marks, mark=*, mark size=1.5pt]
    table{data/1_boundary_0_interior_fin.dat};
    \addplot[densely dotted,color=black] coordinates {(3, 0) (3, 10)};
    \addplot[densely dotted,color=black] coordinates {(6, 0) (6, 10)};
    \addplot[densely dotted,color=black] coordinates {(6, 0) (10,2)};
    \end{axis}
    \end{tikzpicture}
}\quad
\subfloat[\(P\) has 2 boundary points]{
    \begin{tikzpicture}
    \begin{axis}[width=0.45\textwidth, axis lines=left, xmin=2, ymin=0, xlabel = {\(|\partial 2P \cap \ZZ^2|\)}, ylabel = {\(|2P^\circ \cap \ZZ^2|\)}]
    \addplot[only marks, mark=x, mark size=3.2pt]
    table{data/2_boundary_0_interior_inf.dat};
    \addplot[only marks, mark=*, mark size=1.5pt]
    table{data/2_boundary_0_interior_fin.dat};
    \addplot[densely dotted,color=black] coordinates {(4, 0) (4, 10)};
    \addplot[densely dotted,color=black] coordinates {(8, 0) (8, 10)};
    \addplot[densely dotted,color=black] coordinates {(6, 0) (12,3)};
    \end{axis}
    \end{tikzpicture}
}
\subfloat[\(P\) has 3 boundary points]{
    \begin{tikzpicture}
    \begin{axis}[width=0.45\textwidth, axis lines=left, xmin=4, ymin=0, xlabel = {\(|\partial 2P \cap \ZZ^2|\)}, ylabel = {\(|2P^\circ \cap \ZZ^2|\)}]
    \addplot[only marks, mark=x, mark size=3.2pt]
    table{data/3_boundary_0_interior_inf.dat};
    \addplot[only marks, mark=*, mark size=1.5pt]
    table{data/3_boundary_0_interior_fin.dat};
    \addplot[densely dotted,color=black] coordinates {(6, 0) (6, 10)};
    \addplot[densely dotted,color=black] coordinates {(10, 0) (10, 10)};
    \addplot[densely dotted,color=black] coordinates {(6, 0) (14,4)};
    \end{axis}
    \end{tikzpicture}
}
\caption{Plots of \((b(2P),i(2P))\) for denominator 2 polygons of size up to 6 with zero interior points. Crosses are realised by infinitely growable polygons and dots by finitely growable polygons. Dotted lines denote the bounds described in Proposition~\ref{prop:inf_half_Ehr_1}. Continued on next page.}
\label{fig:0_int}
\end{figure}
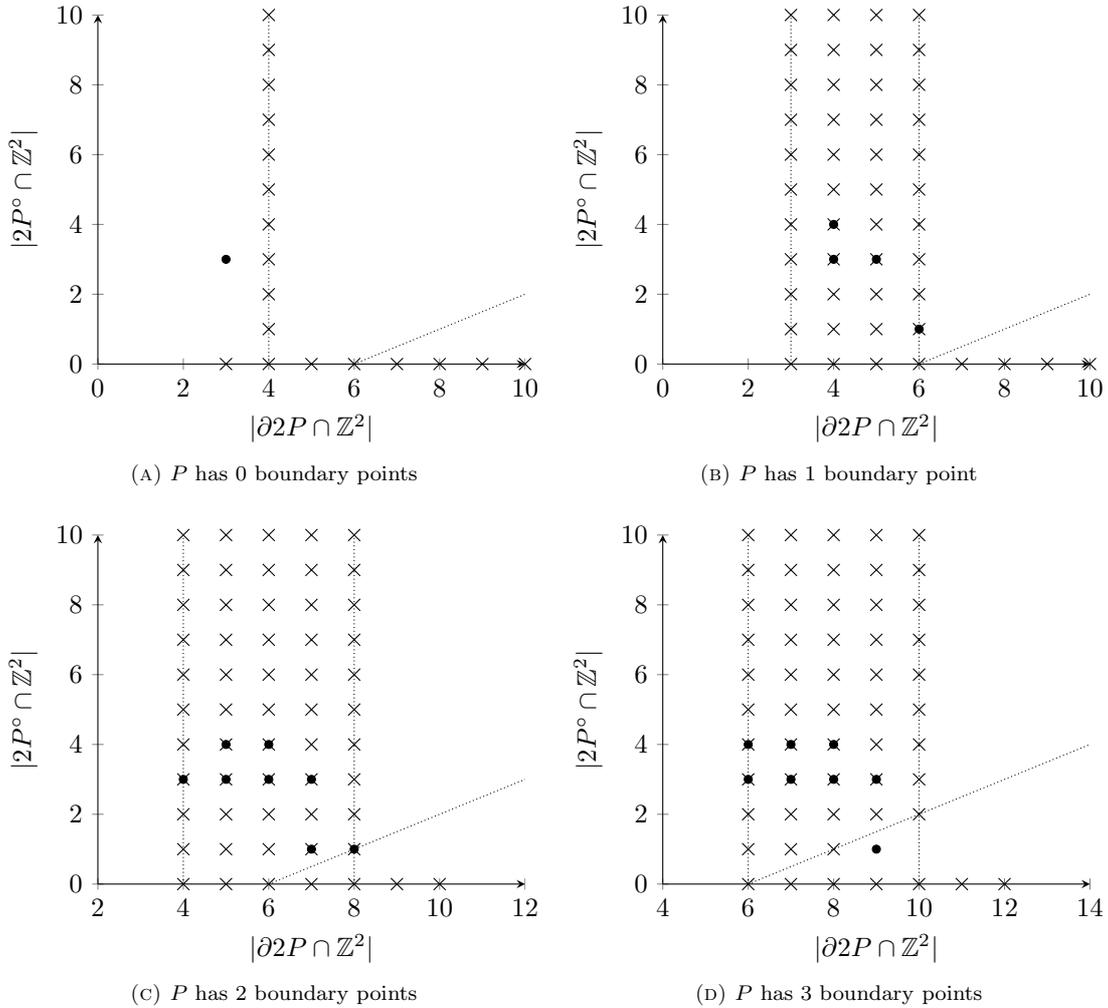

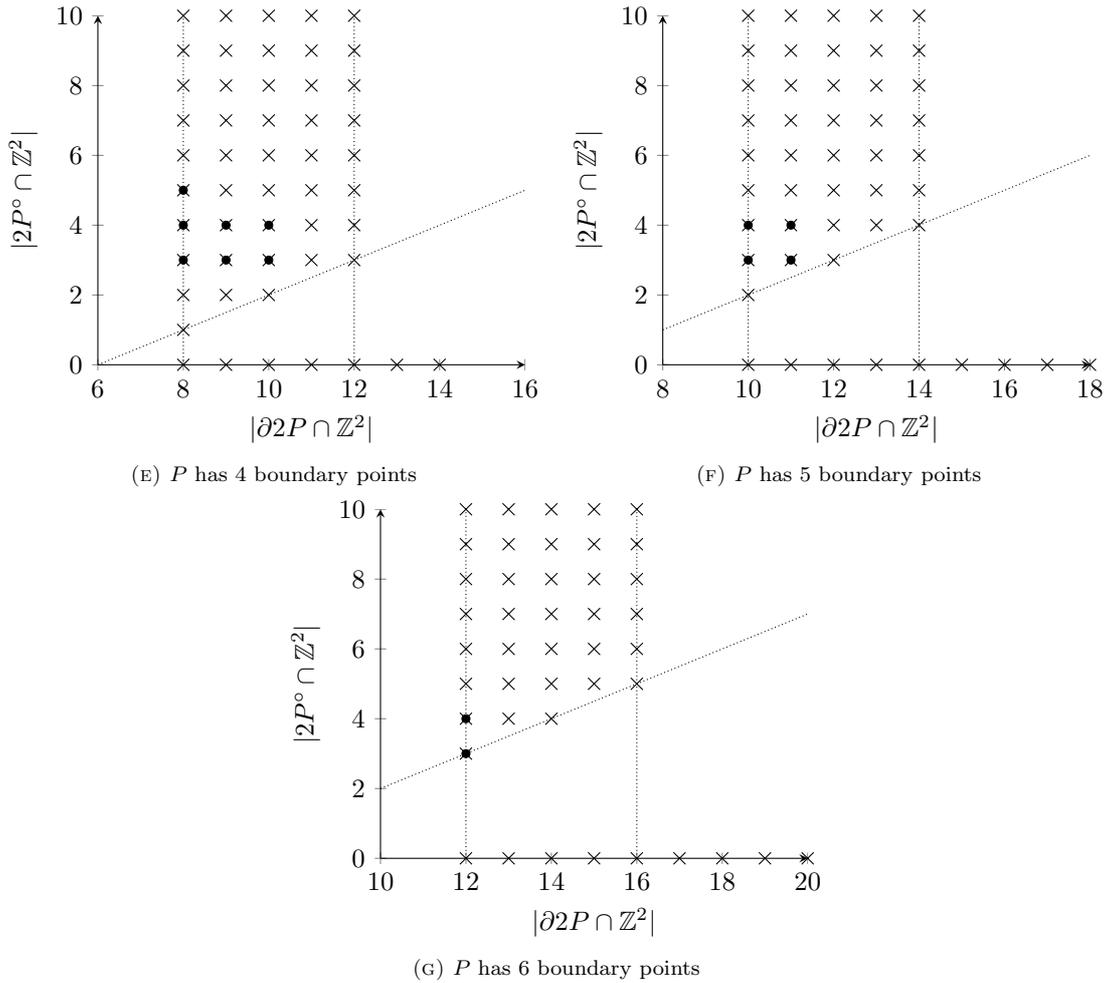
\begin{figure}[ht]
\centering
\ContinuedFloat
\subfloat[\(P\) has 4 boundary points]{
    \begin{tikzpicture}
    \begin{axis}[width=0.45\textwidth, axis lines=left, xmin=6, ymin=0, xlabel = {\(|\partial 2P \cap \ZZ^2|\)}, ylabel = {\(|2P^\circ \cap \ZZ^2|\)}]
    \addplot[only marks, mark=x, mark size=3.2pt]
    table{data/4_boundary_0_interior_inf.dat};
    \addplot[only marks, mark=*, mark size=1.5pt]
    table{data/4_boundary_0_interior_fin.dat};
    \addplot[densely dotted,color=black] coordinates {(8, 0) (8, 10)};
    \addplot[densely dotted,color=black] coordinates {(12, 0) (12, 10)};
    \addplot[densely dotted,color=black] coordinates {(6, 0) (16,5)};
    \end{axis}
    \end{tikzpicture}
}
\subfloat[\(P\) has 5 boundary points]{
    \begin{tikzpicture}
    \begin{axis}[width=0.45\textwidth, axis lines=left, xmin=8, ymin=0, xlabel = {\(|\partial 2P \cap \ZZ^2|\)}, ylabel = {\(|2P^\circ \cap \ZZ^2|\)}]
    \addplot[only marks, mark=x, mark size=3.2pt]
    table{data/5_boundary_0_interior_inf.dat};
    \addplot[only marks, mark=*, mark size=1.5pt]
    table{data/5_boundary_0_interior_fin.dat};
    \addplot[densely dotted,color=black] coordinates {(10, 0) (10, 10)};
    \addplot[densely dotted,color=black] coordinates {(14, 0) (14, 10)};
    \addplot[densely dotted,color=black] coordinates {(8, 1) (18,6)};
    \end{axis}
    \end{tikzpicture}
}\quad
\subfloat[\(P\) has 6 boundary points]{
    \begin{tikzpicture}
    \begin{axis}[width=0.45\textwidth, axis lines=left, xmin=10, ymin=0, xlabel = {\(|\partial 2P \cap \ZZ^2|\)}, ylabel = {\(|2P^\circ \cap \ZZ^2|\)}]
    \addplot[only marks, mark=x, mark size=3.2pt]
    table{data/6_boundary_0_interior_inf.dat};
    \addplot[only marks, mark=*, mark size=1.5pt]
    table{data/6_boundary_0_interior_fin.dat};
    \addplot[densely dotted,color=black] coordinates {(12, 0) (12, 10)};
    \addplot[densely dotted,color=black] coordinates {(16, 0) (16, 10)};
    \addplot[densely dotted,color=black] coordinates {(10, 2) (20,7)};
    \end{axis}
    \end{tikzpicture}
}
\caption{Plots of \((b(2P),i(2P))\) for denominator 2 polygons of size up to 6 with zero interior points continued.}
\end{figure}

\begin{proof}
    The fact that \(P\) is infinitely growable shows that \(i_1=0\).
    It is immediate that exactly one of the following three conditions holds
    \begin{enumerate}[(a)]
        \item \(i_2=0\),
        \item \(i_2>0\) and \(b_1=0\) or
        \item \(i_2>0\) and \(b_1>0\).
    \end{enumerate}
    It suffices to show that, in each of these cases, the corresponding condition of the proposition holds.
    
    The integers \(b_2\) and \(i_2\) are the number of boundary and interior points of a lattice polygon so they must satisfy Theorem~\ref{thm:Scotts}.
    The exceptional case where \(b_2=9\) and \(i_2=1\) never occurs since the only possible \(2P\) in this case has width 3, contradicting the fact that \(P\) is infinitely growable.
    Therefore, \(b_2 \geq 3\) and either \(i_2=0\) or \(b_2 \leq 2i_2+6\).
    
    For each boundary point \(v\) of \(P\) we know that \(2v\) is a boundary point of \(2P\).
    For each pair of adjacent boundary points of \(P\) either they are connected by part of a single edge of \(P\) or there is a half-integral vertex between them on the boundary of \(P\).
    In either case, there is a half-integral point on the boundary of \(P\) between each such pair so \(b_2 \geq 2b_1\).
    Therefore, when \(i_2=0\) condition (1) holds
    
    From now on we assume that \(i_2 >0\).
    We may assume that \(P\) is a subset of \([0,1] \times \RR\) and consider the number of boundary points \(2P\) has on each of the hyperplanes \(x=0,1\) and \(2\).
    Since \(i_2>0\), \(2P\) has interior points on the line \(x=1\), so \(2P\) can have at most two boundary points on this line.
    On the lines \(x=0\) and \(x=2\) together \(2P\) can have at most \(2b_1+2\) boundary points so \(b_2 \leq 2b_1+4\).
    Therefore, when \(i_2\) and \(b_1\) are positive, condition (3) holds.
    
    If \(b_1=0\) and \(i_2>0\) then \(2P\) contains interior lattice points in the line \(x=1\) but contains at most one point in each of the lines \(x=0\) and \(2\).
    Therefore, \(2P\) contains exactly one boundary point on each of the lines \(x=0\) and \(2\).
    Call these \(v_0\) and \(v_2\).
    The only possible remaining vertices of \(2P\) are in the line \(x=1\).
    If \(2P\) has two vertices in the line \(x=1\) then we have shown that \(b_2\geq 4\).
    Otherwise, the line segment from \(v_0\) to \(v_2\) is an edge of \(2P\).
    It contains a lattice point at its midpoint since it is affine equivalent to \(\conv((1,0),(1,2))\).
    Therefore, \(2P\) has two boundary points on the line \(x=1\) and \(b_2\geq 4\).
    Therefore, when \(i_2\) is positive and \(b_1=0\), condition (2) holds.
\end{proof}

\begin{proposition}\label{prop:inf_half_Ehr_2}
    Let \(b_1,b_2\) and \(i_2\) be non-negative integers satisfying one of the conditions in Proposition~\ref{prop:inf_half_Ehr_1}.
    Then there is an infinitely growable, denominator 2 polygon \(P\) such that \(P\) has \(b_1\) boundary points and \(2P\) has \(b_2\) boundary points and \(i_2\) interior points.
\end{proposition}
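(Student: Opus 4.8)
The plan is to prove a converse to Proposition~\ref{prop:inf_half_Ehr_1} by explicitly constructing, for every admissible triple \((b_1,b_2,i_2)\), an infinitely growable denominator 2 polygon realising it. Since infinitely growable polygons are exactly those equivalent to subsets of the strip \([0,1]\times\RR\) by Corollary~\ref{cor:dim-2-grow-inf}, every candidate polygon we build will live inside this strip, and its infinite growability will be automatic. The construction naturally splits along the three cases of Proposition~\ref{prop:inf_half_Ehr_1}, and within each case I would use a parametrised family of polygons whose vertices have \(x\)-coordinates among \(\{0,\tfrac12,1\}\), so that the half-integral points of \(P\) land on \(x=0,\tfrac12,1\) and the lattice points of \(2P\) land on the lines \(x=0,1,2\). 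The main work is bookkeeping: reading off \(b_1\), \(b_2\) and \(i_2\) from the chosen vertices and checking that \(P\) has no interior points (as required, since \(i_1=0\) for every such polygon).

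First I would handle case (1), where \(i_2=0\) and \(b_2\geq\max(3,2b_1)\). Here I would take a thin quadrilateral or triangle in the strip with some controlled number of half-integral boundary points on the two vertical edges \(x=0\) and \(x=1\); lengthening these edges raises \(b_1\), while shearing the figure lets me increase \(b_2\) independently past the minimum \(2b_1\) (each unit of shear in a suitable direction introduces extra boundary lattice points of \(2P\) on the line \(x=1\) without creating interior points). Degenerate sub-cases such as \(b_1=0\) or \(b_1=1\) and the lower boundary value \(b_2=3\) would be exhibited by hand using small triangles. For case (2), where \(b_1=0\), \(i_2>0\) and \(b_2=4\), I would use a family of the form \(\conv\bigl((0,0),(1,0),(1,2m),\dots\bigr)\)-type triangles or narrow quadrilaterals in the strip whose only boundary lattice points of \(2P\) are the four forced ones on \(x=0\) and \(x=2\) (plus the induced midpoints), with all the interior count \(i_2\) accumulating on the central line \(x=1\); tuning the height produces every positive value of \(i_2\).

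The genuinely delicate case is (3), where \(b_1,i_2>0\) and both inequalities \(\max(3,2b_1)\le b_2\le 2b_1+4\) and \(b_2\le 2i_2+6\) must hold simultaneously. Here I would again take polygons with half-integral points stacked on \(x=0\) and \(x=1\) to fix \(b_1\), but now I need two independent ``dials'': one controlling \(b_2\) within its window of width \(4\) above \(2b_1\) (governed by how many boundary lattice points \(2P\) picks up on \(x=1\) and at the top and bottom), and one controlling \(i_2\). The constraint \(b_2\le 2i_2+6\) is exactly the Scott inequality for the lattice polygon \(2P\), so any polygon I write down is automatically consistent with it; the real content is surjectivity, i.e.\ hitting every admissible \((b_2,i_2)\) pair. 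I expect the main obstacle to be verifying that the two dials are truly independent and that the extreme corners of the admissible region (for instance \(b_2=2b_1+4\) together with the minimal allowed \(i_2\)) are attained, rather than merely the interior; these boundary realisations may require a separate small family or an ad hoc construction, analogous to the exceptional checks one makes when proving sharpness of Scott's inequality. Throughout, the infinite growability is never in doubt — it is guaranteed by containment in \([0,1]\times\RR\) — so the entire proof reduces to producing the right finite menu of explicit strip-polygons and tabulating their point counts.
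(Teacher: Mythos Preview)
Your overall strategy matches the paper's exactly: split along the three conditions and exhibit explicit strip polygons realising every admissible triple, relying on Corollary~\ref{cor:dim-2-grow-inf} for infinite growability. The paper simply writes down the families outright (one bullet-pointed polygon per sub-case), so the structure of your argument is correct.

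However, two of your sketched constructions would not work as described. In case~(1) you place half-integral points on both vertical edges \(x=0\) and \(x=1\), so that \(2P\) has width~\(2\). But then the line \(x=1\) passes through the \emph{interior} of \(2P\), and any lattice point there beyond the top and bottom contributes to \(i_2\), not \(b_2\); your shearing mechanism cannot push \(b_2\) arbitrarily high while keeping \(i_2=0\). Indeed the only hollow width-\(2\) lattice polygon is \(2\Delta\). The paper avoids this by taking \(P\subset[0,\tfrac12]\times\RR\), so that \(2P\) has width~\(1\) and is automatically hollow; the parameter \(b_2\) then comes from the height of the \(x=\tfrac12\) edge. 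In case~(2) the template \(\conv((0,0),(1,0),(1,2m),\dots)\) has at least two lattice boundary points and hence \(b_1\ge2\), contradicting \(b_1=0\); you need all vertices of \(P\) to be strictly half-integral. The paper uses \(\conv((0,\tfrac12),(1,\tfrac12),(\tfrac12,\tfrac{i_2+2}{2}))\). Your treatment of case~(3) via two independent dials is essentially what the paper does: it fixes one polygon shape for each residue \(b_2-2b_1\in\{0,\dots,4\}\) and lets a single top vertex at \((\tfrac12,\tfrac{i_2+1}{2})\) control \(i_2\).
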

\begin{proof}
    We consider tuples \((b_1,b_2,i_2)\) of non-negative integers which satisfy the conditions of Proposition~\ref{prop:inf_half_Ehr_1}.
    We say that a polygon \(P\) realises such a tuple if it is infinitely growable, has \(b_1\) boundary points and \(2P\) has \(b_2\) boundary points and \(i_2\) interior points.
    We give examples of infinite families of denominator 2 polygons realising all of these tuples.
    
    The first condition is satisfied by tuples \((0,b_2,0)\) where \(b_2 \geq 3\), \((1,b_2,0)\) where \(b_2 \geq 3\) and \((b_1,b_2,0)\) where \(b_2 \geq 2b_1 \geq 4\).
    These are realised by the polygons
    \begin{itemize}
        \item \(\conv((0,\frac12),(\frac12,0),(\frac12,\frac{b_2-2}{2})\) if \(b_1=0\), and 
        \item \(\conv((0,0),(0,b_1-1),(\frac12,0),(\frac12,\frac{b_2-2b_1}{2})\) if \(b_1>0\).
    \end{itemize}
    
    The second condition is satisfied by tuples \((0,4,i_2)\) where \(i_2>0\).
    These are realised by polygons
    \begin{itemize}
        \item \(\conv((0,\frac12), (1,\frac12), (\frac12,\frac{i_2+2}{2}))\).
    \end{itemize}
    
    If \(b_1=1\) the third condition is satisfied by the tuples \((1,b_2,i_2)\) where \(3 \leq b_2 \leq 6\) and \(b_2 \leq 2i_2+6\).
    These are realised by the polygons
    \begin{itemize}
    \item \(\conv((0,0), (1,\frac12), (\frac12,\frac{i_2+1}{2}))\) if \(b_2=3\),
    \item \(\conv((0,0), (0,\frac12), (1,\frac12), (\frac12,\frac{i_2+1}{2}))\) if \(b_2=4\),
    \item \(\conv((0,0), (0,\frac12), (\frac12,0), (1,\frac12), (\frac12,\frac{i_2+1}{2}))\) if \(b_2=5\), and 
    \item \(\conv((0,-\frac12), (0,\frac12), (1,\frac12), (\frac12,\frac{i_2+1}{2}))\) if \(b_2=6\).
    \end{itemize}

    If \(b_1\geq 2\) the third condition is satisfied by the tuples \((b_1,b_2,i_2)\) where \(2b_1 \leq b_2 \leq 2b_1+4\) and \(b_2 \leq 2i_2+6\).
    These are realised by the polygons
    \begin{itemize}
    \item \(\conv((0,0),(0,b_1-2),(1,0),(\frac12,\frac{i_2+1}{2}))\) if \(b_2=2b_1\),
    \item \(\conv((0,-\frac12),(0,b_1-2),(1,0),(\frac12, -\frac12), (\frac12,\frac{i_2+1}{2}))\) if \(b_2=2b_1+1\),
    \item \(\conv((0,-\frac12),(0,b_1-2),(1,0),(1, -\frac12), (\frac12,\frac{i_2+1}{2}))\) if \(b_2=2b_1+2\),
    \item \(\conv((0,-\frac12),(0,b_1-2),(1,\frac12),(1, -\frac12), (\frac12,\frac{i_2+1}{2}))\) if \(b_2=2b_1+3\), and 
    \item \(\conv((0,-\frac12),(0,b_1-\frac32),(1,\frac12),(1, -\frac12), (\frac12,\frac{i_2+1}{2}))\) if \(b_2=2b_1+4\).
    \end{itemize}
\end{proof}

\subsubsection{Finitely growable polygons}

We grow the finitely growable polygons with zero interior points using an adaptation of Algorithm~\ref{alg:growing}.
First we identify the minimal polygons they contain.
\begin{proposition}
    Let \(P\) be a denominator 2, finitely growable polygon with zero interior points.
    Then \(P\) contains a unique minimal polygon \(Q\) with the same size and denominator as \(P\).
    The polygon \(Q\) must be one of the following
    \[
        \begin{matrix}\conv((0,0),(\tfrac12,0),(0,\tfrac12)) &
        \conv((\tfrac12,\tfrac12),(\tfrac12,1),(1,\tfrac12))\\
        \conv((0,0),(1,0),(0,\tfrac12)) &
        \conv((0,0),(1,0),(0,1))\\
        \conv((0,0),(1,0),(0,1),(1,1))&
        \conv((0,0),(1,0),(0,2))\\
        \conv((0,0),(1,0),(0,2),(1,1))&
        \conv((0,0),(1,0),(0,3))\\
        \conv((0,0),(1,0),(0,3),(1,1))&
        \conv((0,0),(2,0),(0,2))
        \end{matrix}
    \]
\end{proposition}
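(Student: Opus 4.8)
The plan is to reduce the statement to a classification of minimal, denominator $2$ polygons with no interior lattice points, and then to discard those that cannot sit inside a finitely growable polygon with no interior points.

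First I would settle existence and uniqueness of $Q$. Proposition~\ref{prop:growing_alg} already guarantees that every denominator $2$, size $k$ polygon contains a minimal polygon of the same denominator and size, and Remark~\ref{rem:only_one_min} shows this $Q$ is unique once $k\geq 1$ (it is determined by $\conv(P\cap\ZZ^2)$); when $k=0$ there is only one minimal polygon up to equivalence by Proposition~\ref{prop:minimal_size_zero2}, so uniqueness is automatic. The key structural observation is that $i(Q)=0$: since $Q$ and $P$ are both two-dimensional and $Q\subseteq P$, we have $\interior(Q)\subseteq\interior(P)$, so every interior lattice point of $Q$ is an interior lattice point of $P$, and $i(P)=0$ forces $i(Q)=0$.

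Next I would enumerate all minimal denominator $2$ polygons with $i(Q)=0$ using the classifications already proved. By Proposition~\ref{prop:minimal_size_zero2} the only size $0$ example is $\conv((\tfrac12,\tfrac12),(\tfrac12,1),(1,\tfrac12))$; by Proposition~\ref{prop:class_of_minimal} the size $1$ example is $T_{2,1}$, and for each $k\geq 2$ the candidates are the triangle $T_{2,k}=\conv((0,0),(k-1,0),(0,\tfrac12))$ together with the lattice polygons of size $k$. Among lattice polygons, Theorem~\ref{thm:Scotts} permits $i=0$ for every boundary count $b\geq 3$, so at this stage the candidate list is still infinite, coming from the family $T_{2,k}$ and from arbitrarily large hollow lattice polygons. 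The remaining work, and the crux, is to show that only the ten listed candidates occur as the minimal polygon of a finitely growable $P$ with $i(P)=0$, and to exhibit a suitable $P$ for each. For the triangles $T_{2,k}$, note that $i(P)=0$ forces the collinear lattice points $(0,0),\dots,(k-1,0)$ of $P$ to lie on a single edge (otherwise an intermediate point would be interior), so $P$ lies on one side of that line; a cross-section argument at height $1$, using Lemma~\ref{lem:Picks_point_on_line} and the fact that the base has lattice length $k-1\geq 2$, then shows $P$ cannot reach height $\geq 1$ without acquiring a lattice point, whence $P$ lies in a width-one strip and is infinitely growable by Corollary~\ref{cor:dim-2-grow-inf}. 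This rules out $T_{2,k}$ for $k\geq 3$, leaving $T_{2,1}$ and $T_{2,2}$. For the lattice-polygon candidates the same style of reasoning bounds how far a hollow, width-one lattice polygon can be thickened by half-integral vertices before an interior lattice point appears or it stays trapped in a strip; carrying this out gives at most six boundary points, after which one inspects the finitely many hollow lattice polygons of size at most $6$. In each surviving case I would finish by displaying an explicit finitely growable $P$ (a small half-integral thickening of $Q$, for instance $\conv((0,0),(1,0),(0,1),(\tfrac32,\tfrac12),(\tfrac12,-\tfrac12))$ for $Q=\Delta$, which has width $\tfrac32>1$, integral hull $\Delta$, and no interior points), confirming that exactly the ten polygons arise.

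The hard part will be the quantitative exclusion: proving that a long, thin hollow lattice polygon (large $b$) cannot be thickened to a non-strip, interior-point-free denominator $2$ polygon, and pinning the bound at six boundary points. This is where the interplay between the strip characterisation of Corollary~\ref{cor:dim-2-grow-inf}, Scott's inequality, and the explicit half-integral geometry must be controlled carefully; the finitely many borderline cases can then be settled either by the growing algorithm of Section~\ref{sec:growing_alg} or by hand.
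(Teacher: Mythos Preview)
Your overall strategy matches the paper's: reduce to the minimal polygon $Q$ determined by $P'=\conv(P\cap\ZZ^2)$, split into cases by $\dim P'$, and use the finitely-growable hypothesis to bound the possibilities. Two steps of your execution, however, do not go through as written.

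First, Lemma~\ref{lem:Picks_point_on_line} cannot be invoked to rule out $T_{2,k}$ for $k\ge3$ in the way you describe: that lemma requires a \emph{lattice} point of $P$ in the open half-space $y>0$, but in the line-segment case every lattice point of $P$ lies on $y=0$ by hypothesis. The paper handles this case with a direct penumbra argument instead: since $P$ is finitely growable it must contain a half-integral point at height $|y|\ge\tfrac32$, and any such point together with the base segment $\conv((0,0),(2,0))$ forces some $(n,1)\in P$ with $n\in\ZZ$, contradicting one-dimensionality of $P'$.

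Second, your bound of at most six boundary points in the lattice-polygon case is asserted rather than derived, and this is exactly where the substance lies. The paper obtains it from two explicit constraints: the same penumbra argument, applied with a base of lattice length $4$, shows that $P'$ can contain at most four collinear lattice points (otherwise an interior lattice point is forced at height~$1$); and since $P$ must have a half-integral vertex with $x$-coordinate exceeding $1$ (else it sits in the strip and is infinitely growable), the line $x=1$ carries at most two lattice points of $P'$ (a third would become interior to $P$). These two bounds, together with the hollow-polygon classification, yield precisely the seven lattice polygons on the list. Your sketch gestures at ``the same style of reasoning'' but does not supply this argument.

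Finally, exhibiting a finitely growable $P$ for each listed $Q$ is not part of the statement (which only asserts that $Q$ \emph{must} be one of the ten), so that portion of your plan is unnecessary.
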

\begin{proof}
    Let \(P'\) denote the convex hull of the lattice points in \(P\).
    As in Section~\ref{sec:minimal_polygons} we can determine \(Q\) from \(P'\).
    If \(P'\) is empty then \(Q\) is equivalent to \(\conv((\frac12,\frac12),(\frac12,1),(1,\frac12))\).
    If \(P'\) is a point then \(Q\) is equivalent to \(\conv((0,0),(\frac12,0),(0,\frac12)\).
    If \(P'\) is a line segment then \(Q\) is equivalent to \(\conv((0,0),(k,0),(0,\frac12))\) for some positive integer \(k\).
    Finally, if \(P'\) is a lattice polygon then \(Q=P'\).
    
    If \(P'\) is a line segment of lattice length at least 2 then we may assume it contains the points \((0,0)\) and \((2,0)\).
    Since \(P\) is finitely growable we may assume that it contains a point with \(y\)-coordinate at least \(\frac32\).
    However, all such points are contained in the penumbra \(\pen(\conv((0,0),(2,0)),(n,1))\) for some integer \(n\) so none can be included in \(P\) without contradicting the condition that \(P'\) is a line segment.
    This is enough to complete the cases where \(P'\) has dimension less than 2.

    Similarly, if \(P'\) contains a line segment of lattice length at least \(4\) then, after some affine map, we may assume it contains the points \((0,0)\) and \((4,0)\).
    Since \(P\) is finitely growable we may assume it contains a point with \(y\)-coordinate at least \(\frac32\).
    However, all such points are contained in the interior of the penumbra \(\pen(\conv((0,0),(4,0)),(n,1))\) for some integer \(n\) so none can be included in \(P\) without contradicting the condition that \(P\) has no interior points.
    Therefore, \(P'\) contains at most 4 colinear lattice points.

    If \(P'\) is a lattice polygon then it is either \(\conv((0,0),(2,0),(0,2))\) or it has width 1 as these are the only lattice polygons with no interior points.
    If \(P'=\conv((0,0),(2,0),(0,2))\) then it cannot be grown by any point of \(\frac12\ZZ^2\) without including a lattice point in the interior, since all facets of \(P'\) have a lattice point in their relative interiors.
    Therefore, in this case \(P=P'\) and we do not attempt to grow \(P'\).

    If \(P'\) has width 1 then we may assume it is equivalent to a subset of the strip \([0,1] \times \RR\) and that \(P\) contains some rational vertex with \(x\)-coordinate greater than \(1\).
    This means that the intersection of \(P'\) with the line \(x=1\) must contain at most two lattice points.
    This, combined with the fact that \(P'\) contains at most 4 colinear lattice points is sufficient to classify the minimal polygons \(Q\).
\end{proof}

For each minimal polygon we apply Algorithm~\ref{alg:growing} with the new condition that we discard polygons with interior lattice points.
By the proof of Proposition~\ref{prop:colinear_points_bound}, if a finitely growable denominator 2 polygon contains at least \(4(k+1)\) half-integral points in the line \(x=\frac12\) we may assume it contains at least \(k+1\) lattice points in the line \(x=1\) and some point with \(x\)-coordinate greater than 1.
Therefore, it contains at least \(k-1\) interior points so our polygons contain at most \(8\) colinear points of \(\frac12\ZZ^2\).
We use this bound to improve compute time.
The result is a classification of 79 rational polygons \(P\) for which the tuples \((b(P),b(2P),i(2P))\) are listed in Table~\ref{tab:zero_int_pts_ehr} and plotted in Figure~\ref{fig:0_int}.
\begin{table}[ht]
    \centering
\begin{tabular}{c c c c c}
    ( 0, 3, 3 )&
    ( 1, 4, 3 )&
    ( 1, 4, 4 )&
    ( 1, 5, 3 )&
    ( 1, 6, 1 )\\
    ( 2, 4, 3 )&
    ( 2, 5, 3 )&
    ( 2, 5, 4 )&
    ( 2, 6, 3 )&
    ( 2, 6, 4 )\\
    ( 2, 7, 1 )&
    ( 2, 7, 3 )&
    ( 2, 8, 1 )&
    ( 3, 6, 3 )&
    ( 3, 6, 4 )\\
    ( 3, 7, 3 )&
    ( 3, 7, 4 )&
    ( 3, 8, 3 )&
    ( 3, 8, 4 )&
    ( 3, 9, 1 )\\
    ( 3, 9, 3 )&
    ( 4, 8, 3 )&
    ( 4, 8, 4 )&
    ( 4, 8, 5 )&
    ( 4, 9, 3 )\\
    ( 4, 9, 4 )&
    ( 4, 10, 3 )&
    ( 4, 10, 4 )&
    ( 5, 10, 3 )&
    ( 5, 10, 4 )\\
    ( 5, 11, 3 )&
    ( 5, 11, 4 )&
    ( 6, 12, 3 )&
    ( 6, 12, 4 )
\end{tabular}
    \caption{Tuples \((b(P),b(2P),i(2P))\) for all finitely growable denominator 2 polygons with zero interior points.}
    \label{tab:zero_int_pts_ehr}
\end{table}

\subsection{Polygons With Interior Points}
\label{sec:ehr_int_pnts}

We now consider the polygons with at least one interior point.
The number of boundary and interior points of \(P\) and \(2P\) for all denominator 2 polygons \(P\) containing an interior point and up to \(4\) lattice points are plotted in Figure~\ref{fig:data_plots}.
The dashed lines in each plot denote the bounds \(b(2P) \geq \max\{3,2b(P)\}\), \(i(2P) \geq b(P) + 2i(P)-1\) and \(b(2P)+i(2P) \leq 2b(P)+6i(P)+7\) all of which are satisfied by every polygon in the dataset.

Before we prove these bounds we provide some justification for our interest in them.
From Figure~\ref{fig:data_plots} we can see that the bounds are sharp.
However, since we seek a bound which is true apart from finitely many exceptions, we would like something stronger than this; each of the hyperplanes \(b(2P)=3\), \(b(2P)=2b(P)\), \(i(2P) = b(P)+2i(P)-1\) and \(b(2P)+i(2P)=2b(P)+6i(P)+7\), contains infinitely many points of the form \((b(P),i(P),b(2P),i(2P))\) for some denominator 2 polygon \(P\).
Moreover, they contain infinitely many points outside of any given plane.
Families of polygons which realise this fact can be found in Figure~\ref{fig:inf_families}.
Each family defines a sequence of colinear points, and a pair of such lines contained in each boundary are skew to one-another.
This can be seen since the lines have different direction vectors and do not intersect.
The result of this  is that there is no way to strengthen the above bounds without excluding infinitely many points.
This does not prevent the existence of additional bounds with different normal vectors, but it does suggest that an optimal polyhedron, containing all but finitely many of our points, will have unbounded facets defined by these inequalities.

To prove that these bounds hold in all but finitely many cases we first need to recall the definition of second width.
If the width of a polygon \(P\) is \(w_1=\width_{u_1}(P)\), then the \emph{second width} of \(P\) is the minimum width of \(P\) with respect to a dual vector \(u_2\), which is linearly independent to \(u_1\).
The second width is greater than or equal to the first width.
It was proven in \cite[Proposition~2.2 and 2.4]{triangles} that a polygon with first and second widths \(w_1\) and \(w_2\) is affine equivalent to a subset of the rectangle \(Q_{w_1,w_2} = [0,w_1] \times [0,w_2]\) and no sub-rectangle.
Such a polygon has a vertex on each edge of this rectangle, and its width with respect to any non-zero dual vector other than \(u_1\) is at least \(w_2\).

\begin{figure}[p!]
\centering
\subfloat[\(P\) has 1 interior point and \newline \hspace*{1.5em} 0 boundary points]{
    \begin{tikzpicture}
    \begin{axis}[width=0.45\textwidth, axis lines=left, xmin=0, ymin=0, xlabel = {\(|\partial 2P \cap \ZZ^2|\)}, ylabel = {\(|2P^\circ \cap \ZZ^2|\)}]
    \addplot[only marks, mark=*, mark size=1.5pt]
    table{data/0_boundary_1_interior.dat};
    \addplot[dashed,color=black] coordinates {(3, -1) (3, 10)};
    \addplot[domain=0:11, dashed, color=black] {1};
    \addplot[domain=3:11, dashed, color=black] {13-x};
    \end{axis}
    \end{tikzpicture}
}
\subfloat[\(P\) has 1 interior point and \newline \hspace*{1.5em} 1 boundary point]{
    \begin{tikzpicture}
    \begin{axis}[width=0.45\textwidth, axis lines=left, xmin=0, ymin=0, xlabel = {\(|\partial 2P \cap \ZZ^2|\)}, ylabel = {\(|2P^\circ \cap \ZZ^2|\)}]
    \addplot[only marks, mark=*, mark size=1.5pt]
    table{data/1_boundary_1_interior.dat};
    \addplot[dashed,color=black] coordinates {(3, -1) (3, 11)};
    \addplot[domain=0:13, dashed, color=black] {2};
    \addplot[domain=3:13, dashed, color=black] {15-x};
    \end{axis}
    \end{tikzpicture}
}\quad
\subfloat[\(P\) has 2 interior points and \newline \hspace*{1.5em} 0 boundary points]{
    \begin{tikzpicture}
    \begin{axis}[width=0.45\textwidth, axis lines=left, xmin=0, ymin=0, xlabel = {\(|\partial 2P \cap \ZZ^2|\)}, ylabel = {\(|2P^\circ \cap \ZZ^2|\)}]
    \addplot[only marks, mark=*, mark size=1.5pt]
    table{data/0_boundary_2_interior.dat};
    \addplot[dashed,color=black] coordinates {(3, -1) (3, 16)};
    \addplot[domain=0:15, dashed, color=black] {3};
    \addplot[domain=3:15, dashed, color=black] {19-x};
    \end{axis}
    \end{tikzpicture}
}
\subfloat[\(P\) has 1 interior point and  \newline \hspace*{1.5em}2 boundary points]{
    \begin{tikzpicture}
    \begin{axis}[width=0.45\textwidth, axis lines=left, xmin=0, ymin=0, xlabel = {\(|\partial 2P \cap \ZZ^2|\)}, ylabel = {\(|2P^\circ \cap \ZZ^2|\)}]
    \addplot[only marks, mark=*, mark size=1.5pt]
    table{data/2_boundary_1_interior.dat};
    \addplot[dashed,color=black] coordinates {(4, -1) (4, 14)};
    \addplot[domain=0:14, dashed, color=black] {3};
    \addplot[domain=3:14, dashed, color=black] {17-x};
    \end{axis}
    \end{tikzpicture}
}\quad
\subfloat[\(P\) has 2 interior points and \newline \hspace*{1.5em} 1 boundary point]{
    \begin{tikzpicture}
    \begin{axis}[width=0.45\textwidth, axis lines=left, xmin=0, ymin=0, xlabel = {\(|\partial 2P \cap \ZZ^2|\)}, ylabel = {\(|2P^\circ \cap \ZZ^2|\)}]
    \addplot[only marks, mark=*, mark size=1.5pt]
    table{data/1_boundary_2_interior.dat};
    \addplot[dashed,color=black] coordinates {(3, -1) (3, 18)};
    \addplot[domain=0:17, dashed, color=black] {4};
    \addplot[domain=3:17, dashed, color=black] {21-x};
    \end{axis}
    \end{tikzpicture}
}
\subfloat[\(P\) has 3 interior points and \newline \hspace*{1.5em} 0 boundary points]{
    \begin{tikzpicture}
    \begin{axis}[width=0.45\textwidth, axis lines=left, xmin=0, ymin=0, xlabel = {\(|\partial 2P \cap \ZZ^2|\)}, ylabel = {\(|2P^\circ \cap \ZZ^2|\)}]
    \addplot[only marks, mark=*, mark size=1.5pt]
    table{data/0_boundary_3_interior.dat};
    \addplot[dashed,color=black] coordinates {(3, -1) (3, 22)};
    \addplot[domain=0:20, dashed, color=black] {5};
    \addplot[domain=3:20, dashed, color=black] {25-x};
    \end{axis}
    \end{tikzpicture}
}
\caption{Number of boundary and interior points for \(P\) and \(2P\) for denominator 2 polygons \(P\) of size up to 4. Continued on next page.
}
\label{fig:data_plots}
\end{figure}
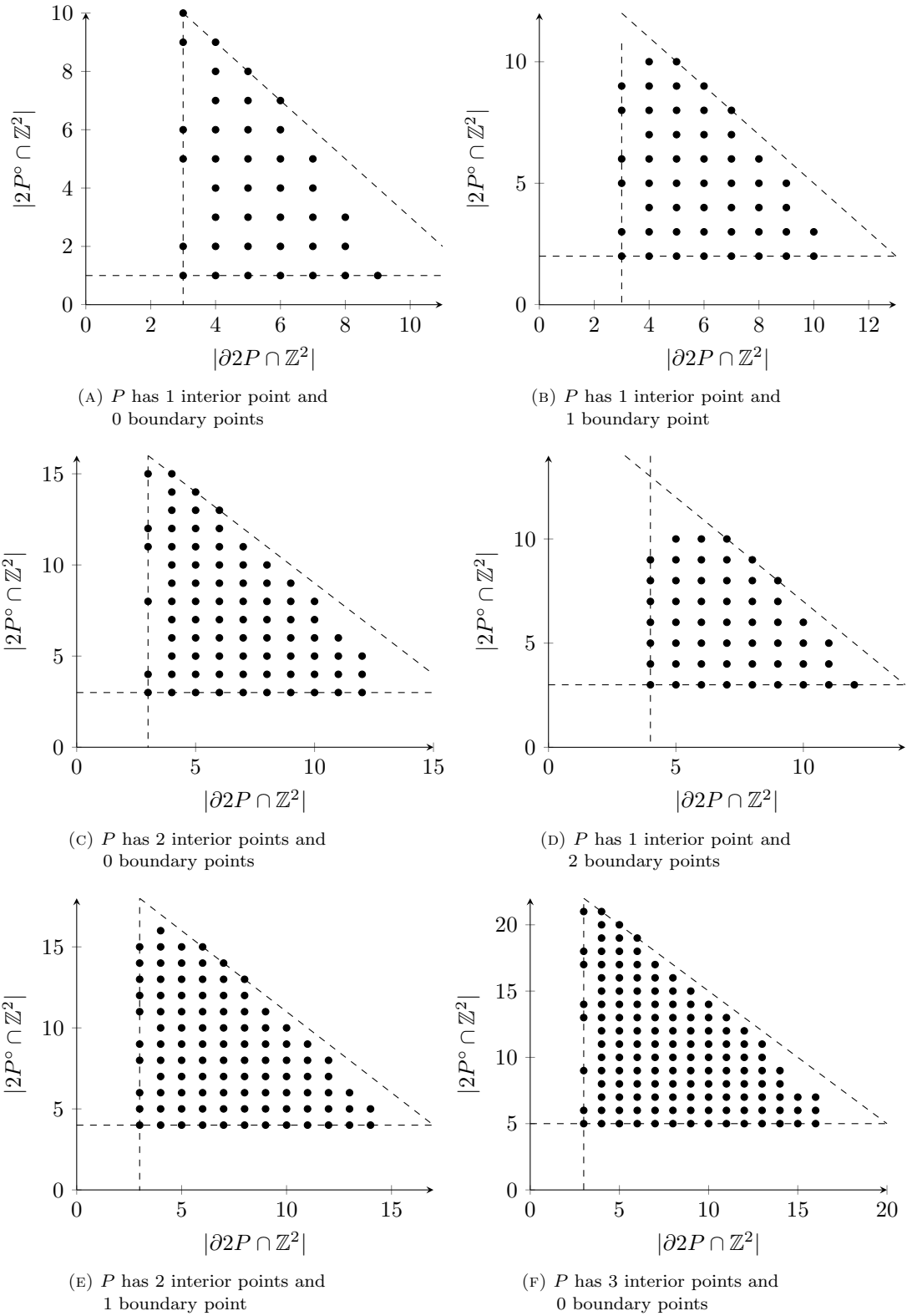

\begin{figure}[ht]
\centering
\ContinuedFloat
\subfloat[\(P\) has 1 interior point and \newline \hspace*{1.5em} 3 boundary points]{
    \begin{tikzpicture}
    \begin{axis}[width=0.45\textwidth, axis lines=left, xmin=0, ymin=0, xlabel = {\(|\partial 2P \cap \ZZ^2|\)}, ylabel = {\(|2P^\circ \cap \ZZ^2|\)}]
    \addplot[only marks, mark=*, mark size=1.5pt]
    table{data/3_boundary_1_interior.dat};
    \addplot[dashed,color=black] coordinates {(6, -1) (6, 13)};
    \addplot[domain=0:16, dashed, color=black] {4};
    \addplot[domain=6:16, dashed, color=black] {19-x};
    \end{axis}
    \end{tikzpicture}
}
\subfloat[\(P\) has 2 interior points and \newline \hspace*{1.5em} 2 boundary points]{
    \begin{tikzpicture}
    \begin{axis}[width=0.45\textwidth, axis lines=left, xmin=0, ymin=0, xlabel = {\(|\partial 2P \cap \ZZ^2|\)}, ylabel = {\(|2P^\circ \cap \ZZ^2|\)}]
    \addplot[only marks, mark=*, mark size=1.5pt]
    table{data/2_boundary_2_interior.dat};
    \addplot[dashed,color=black] coordinates {(4, -1) (4, 19)};
    \addplot[domain=0:18, dashed, color=black] {5};
    \addplot[domain=3:18, dashed, color=black] {23-x};
    \end{axis}
    \end{tikzpicture}
}\quad
\subfloat[\(P\) has 3 interior points and \newline \hspace*{1.5em} 1 boundary point]{
    \begin{tikzpicture}
    \begin{axis}[width=0.45\textwidth, axis lines=left, xmin=0, ymin=0, xlabel = {\(|\partial 2P \cap \ZZ^2|\)}, ylabel = {\(|2P^\circ \cap \ZZ^2|\)}]
    \addplot[only marks, mark=*, mark size=1.2pt]
    table{data/1_boundary_3_interior.dat};
    \addplot[dashed,color=black] coordinates {(3, -1) (3, 25)};
    \addplot[domain=0:20, dashed, color=black] {6};
    \addplot[domain=3:20, dashed, color=black] {27-x};
    \end{axis}
    \end{tikzpicture}
}
\subfloat[\(P\) has 4 interior points and \newline \hspace*{1.5em} 0 boundary points]{
    \begin{tikzpicture}
    \begin{axis}[width=0.45\textwidth, axis lines=left, xmin=0, ymin=0, xlabel = {\(|\partial 2P \cap \ZZ^2|\)}, ylabel = {\(|2P^\circ \cap \ZZ^2|\)}]
    \addplot[only marks, mark=*, mark size=1.2pt]
    table{data/0_boundary_4_interior.dat};
    \addplot[dashed,color=black] coordinates {(3, -1) (3, 29)};
    \addplot[domain=0:24, dashed, color=black] {7};
    \addplot[domain=2:24, dashed, color=black] {31-x};
    \end{axis}
    \end{tikzpicture}
}
\caption{Ehrhart data for denominator 2 polygons of size up to 4 continued.}
\end{figure}
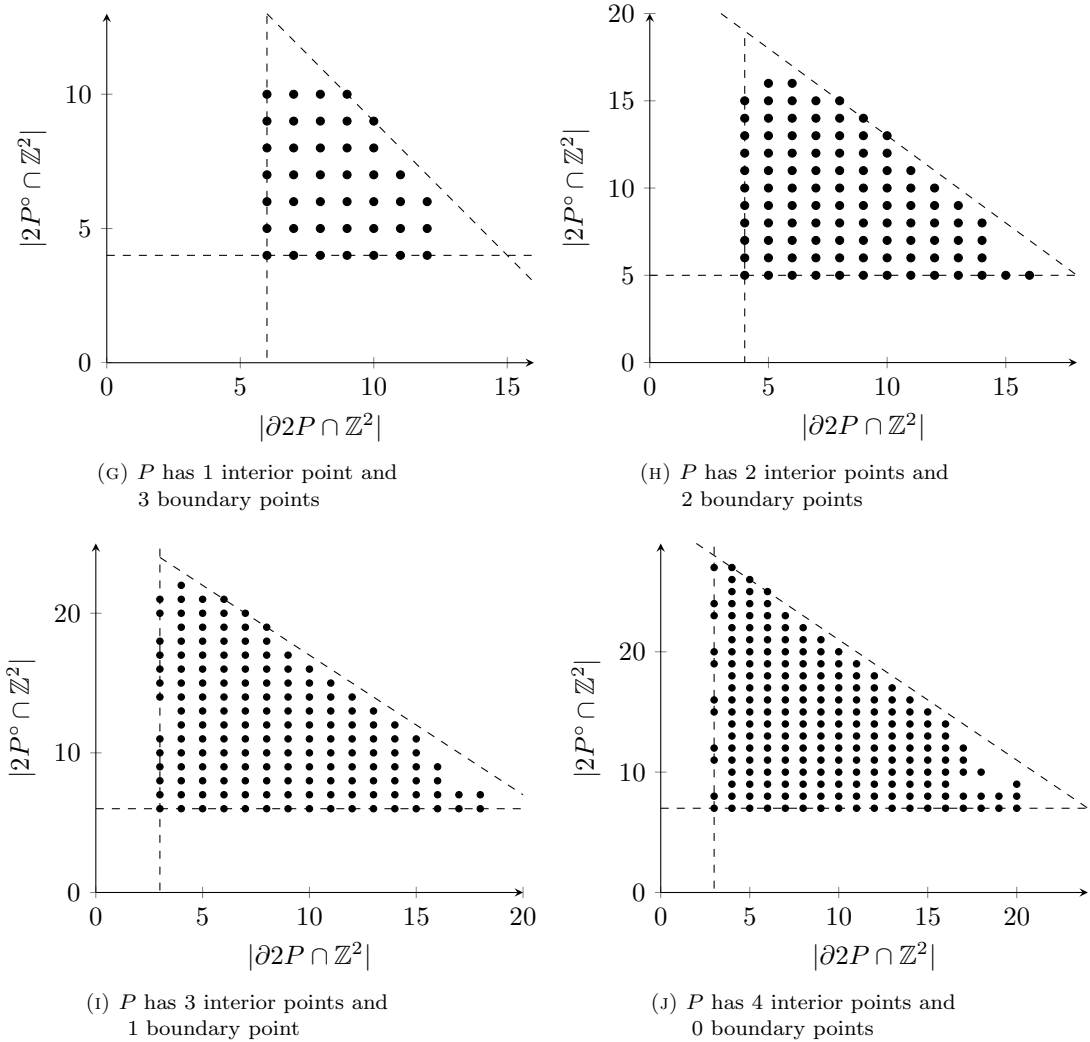

\begin{figure}[ht]
    \centering
\begin{tikzpicture}[x=0.4cm,y=0.4cm]

\foreach \i in {1,2,3}{
    \draw[fill=gray!30] (1+\i*\i + 3*\i - 4,-1+38) -- (2+\i*\i + 3*\i - 4,1+38) -- (2*\i+1+\i*\i + 3*\i - 4,0+38) -- cycle;

    \draw[fill=gray!30] (\i*\i + 3*\i - 4+0,1+32) -- (\i*\i + 3*\i - 4+0,-1+32) -- (\i*\i + 3*\i - 4+2*\i+2,0+32) -- cycle;

    \draw[fill=gray!30] (0+\i*\i + 3*\i - 4,-1+26) -- (0+\i*\i + 3*\i - 4,1+26) -- (2*\i+2+\i*\i + 3*\i - 4,1+26) -- (2*\i+2+\i*\i + 3*\i - 4,-1+26) -- cycle;

    \draw[fill=gray!30] (1+\i*\i + 3*\i - 4,-2+20) -- (2+\i*\i + 3*\i - 4,1+20) -- (2*\i+2+\i*\i + 3*\i - 4,0+20) -- cycle;

    \draw[fill=gray!30] (2+\i*\i + 3*\i - 4,2+14) -- (0+\i*\i + 3*\i - 4,0+14) -- (2+\i*\i + 3*\i - 4,-2+14) -- (2*\i+2+\i*\i + 3*\i - 4,0+14) -- cycle;

    \draw[fill=gray!30] (1+\i*\i + 3*\i - 4,2+8) -- (0+\i*\i + 3*\i - 4,1+8) -- (1+\i*\i + 3*\i - 4,-1+8) -- (3+\i*\i + 3*\i - 4,-2+8) -- (2*\i+1+\i*\i + 3*\i - 4,-2+8) -- (2*\i+2+\i*\i + 3*\i - 4,-1+8) -- (2*\i+1+\i*\i + 3*\i - 4,1+8) -- (2*\i-1+\i*\i + 3*\i - 4,2+8) -- cycle;

    \draw[fill=gray!30] (3+\i*\i + 3*\i - 4,0) -- (1+\i*\i + 3*\i - 4,0) -- (0+\i*\i + 3*\i - 4,1) -- (0+\i*\i + 3*\i - 4,3) -- (1+\i*\i + 3*\i - 4,4) -- (3+\i*\i + 3*\i - 4,4) -- (2*\i+2+\i*\i + 3*\i - 4,3) -- (2*\i+2+\i*\i + 3*\i - 4,1) -- cycle;
}

\foreach \y in {5,11,...,35}{
    \draw[dashed] (-1,\y) -- (23,\y);
}

\foreach \x in {-1,...,23}{
    \foreach \y in {0,1,...,4, 6,7,...,10, 12,13,...,16, 18,19,...,22, 24,25,...,28, 30,31,...,34, 36,37,...,40}{
        \node[draw,circle,inner sep=0.1pt,fill] at (\x,\y) {};
    }
}
\foreach \x in {0,2,...,22}{
    \foreach \y in {0,2,...,40}{
        \node[cross=2pt] at (\x,\y) {};
    }
}

\foreach \i in {1,2,3}{
\node[draw,circle,inner sep=1.5pt,fill] at (1+\i*\i + 3*\i - 4,-1+38) {};
\node[draw,circle,inner sep=1.5pt,fill] at (2+\i*\i + 3*\i - 4,1+38) {};
\node[draw,circle,inner sep=1.5pt,fill=white] at (2*\i+1+\i*\i + 3*\i - 4,0+38) {};

\node[draw,circle,inner sep=1.5pt,fill] at (\i*\i + 3*\i - 4+0,1+32) {};
\node[draw,circle,inner sep=1.5pt,fill] at (\i*\i + 3*\i - 4+0,-1+32) {};
\node[draw,circle,inner sep=1.5pt,fill=white] at (\i*\i + 3*\i - 4+2*\i+2,0+32) {}; 

\node[draw,circle,inner sep=1.5pt,fill] at (0+\i*\i + 3*\i - 4,-1+26) {};
\node[draw,circle,inner sep=1.5pt,fill] at (0+\i*\i + 3*\i - 4,1+26) {};
\node[draw,circle,inner sep=1.5pt,fill=white] at (2*\i+2+\i*\i + 3*\i - 4,1+26) {}; 
\node[draw,circle,inner sep=1.5pt,fill=white] at (2*\i+2+\i*\i + 3*\i - 4,-1+26) {};  

\node[draw,circle,inner sep=1.5pt,fill] at (1+\i*\i + 3*\i - 4,-2+20) {};
\node[draw,circle,inner sep=1.5pt,fill] at (2+\i*\i + 3*\i - 4,1+20) {};
\node[draw,circle,inner sep=1.5pt,fill=white] at (2*\i+2+\i*\i + 3*\i - 4,0+20) {}; 

\node[draw,circle,inner sep=1.5pt,fill] at (2+\i*\i + 3*\i - 4,2+14) {};
\node[draw,circle,inner sep=1.5pt,fill] at (0+\i*\i + 3*\i - 4,0+14) {};
\node[draw,circle,inner sep=1.5pt,fill] at (2+\i*\i + 3*\i - 4,-2+14) {};
\node[draw,circle,inner sep=1.5pt,fill=white] at (2*\i+2+\i*\i + 3*\i - 4,0+14) {}; 

\node[draw,circle,inner sep=1.5pt,fill] at (1+\i*\i + 3*\i - 4,2+8) {};
\node[draw,circle,inner sep=1.5pt,fill] at (0+\i*\i + 3*\i - 4,1+8) {};
\node[draw,circle,inner sep=1.5pt,fill] at (1+\i*\i + 3*\i - 4,-1+8) {};
\node[draw,circle,inner sep=1.5pt,fill] at (3+\i*\i + 3*\i - 4,-2+8) {};
\node[draw,circle,inner sep=1.5pt,fill=white] at (2*\i+1+\i*\i + 3*\i - 4,-2+8) {}; 
\node[draw,circle,inner sep=1.5pt,fill=white] at (2*\i+2+\i*\i + 3*\i - 4,-1+8) {}; 
\node[draw,circle,inner sep=1.5pt,fill=white] at (2*\i+1+\i*\i + 3*\i - 4,1+8) {}; 
\node[draw,circle,inner sep=1.5pt,fill=white] at (2*\i-1+\i*\i + 3*\i - 4,2+8) {}; 

\node[draw,circle,inner sep=1.5pt,fill] at (3+\i*\i + 3*\i - 4,0) {};
\node[draw,circle,inner sep=1.5pt,fill] at (1+\i*\i + 3*\i - 4,0) {};
\node[draw,circle,inner sep=1.5pt,fill] at (0+\i*\i + 3*\i - 4,1) {};
\node[draw,circle,inner sep=1.5pt,fill] at (0+\i*\i + 3*\i - 4,3) {};
\node[draw,circle,inner sep=1.5pt,fill] at (1+\i*\i + 3*\i - 4,4) {};
\node[draw,circle,inner sep=1.5pt,fill] at (3+\i*\i + 3*\i - 4,4) {};
\node[draw,circle,inner sep=1.5pt,fill=white] at (2*\i+2+\i*\i + 3*\i - 4,3) {}; 
\node[draw,circle,inner sep=1.5pt,fill=white] at (2*\i+2+\i*\i + 3*\i - 4,1) {}; 
}

\node[anchor=west] at (24,39) {\((0,i,3,2i-1)\)};
\node[anchor=west] at (24,38) {\(b(2P) = 3\)};
\node[anchor=west] at (24,37) {\(i(2P) = b(P) +2i(P)-1\)};

\node[anchor=west] at (24,33) {\((2, i, 4, 2i+1)\)};
\node[anchor=west] at (24,32) {\(b(2P) = 2b(P)\)};
\node[anchor=west] at (24,31) {\(i(2P) = b(P) + 2i(P) -1\)};

\node[anchor=west] at (24,27) {\((2,i,4i+8, 2i+1)\)};
\node[anchor=west] at (24,26) {\(i(2P) = b(P) +2i(P)-1\)};

\node[anchor=west] at (24,21) {\((1,i,3,3i)\)};
\node[anchor=west] at (24,20) {\(b(2P) = 3\)};

\node[anchor=west] at (24,15) {\((4,i,8,4i+1)\)};
\node[anchor=west] at (24,14) {\(b(2P) = 2b(P)\)};

\node[anchor=west] at (24,9) {\((2i-2,i,4i+2,6i+1)\)};
\node[anchor=west] at (24,8) {\(b(2P) + i(2P) = 2b(P) +6i(P) +7\)};

\node[anchor=west] at (24,3) {\((4,i,12,6i+3)\)};
\node[anchor=west] at (24,2) {\(b(2P) + i(2P) = 2b(P) +6i(P) +7\)};

\node[draw,semicircle,inner sep=0.7pt,fill] at (1,10.07) {};
\node[draw,semicircle,inner sep=0.7pt,fill] at (3,6.07) {};

\end{tikzpicture}
    \caption{Representatives of 7 infinite families of polygons, for which the tuples \((b(P),i(P),b(2P),i(2P))\) are on the boundaries of the region described in Theorem~\ref{thm:hip_finlap_main}. To obtain each whole family in general, fix the vertices denoted by filled points and successively add \((1,0)\) to the vertices denoted by hollow points. For each family, we include the general form of \((b(P),i(P),b(2P),i(2P))\) in terms of an integer \(i \geq 1\) as well as the equations of the boundaries these points are contained in.}
    \label{fig:inf_families}
\end{figure}
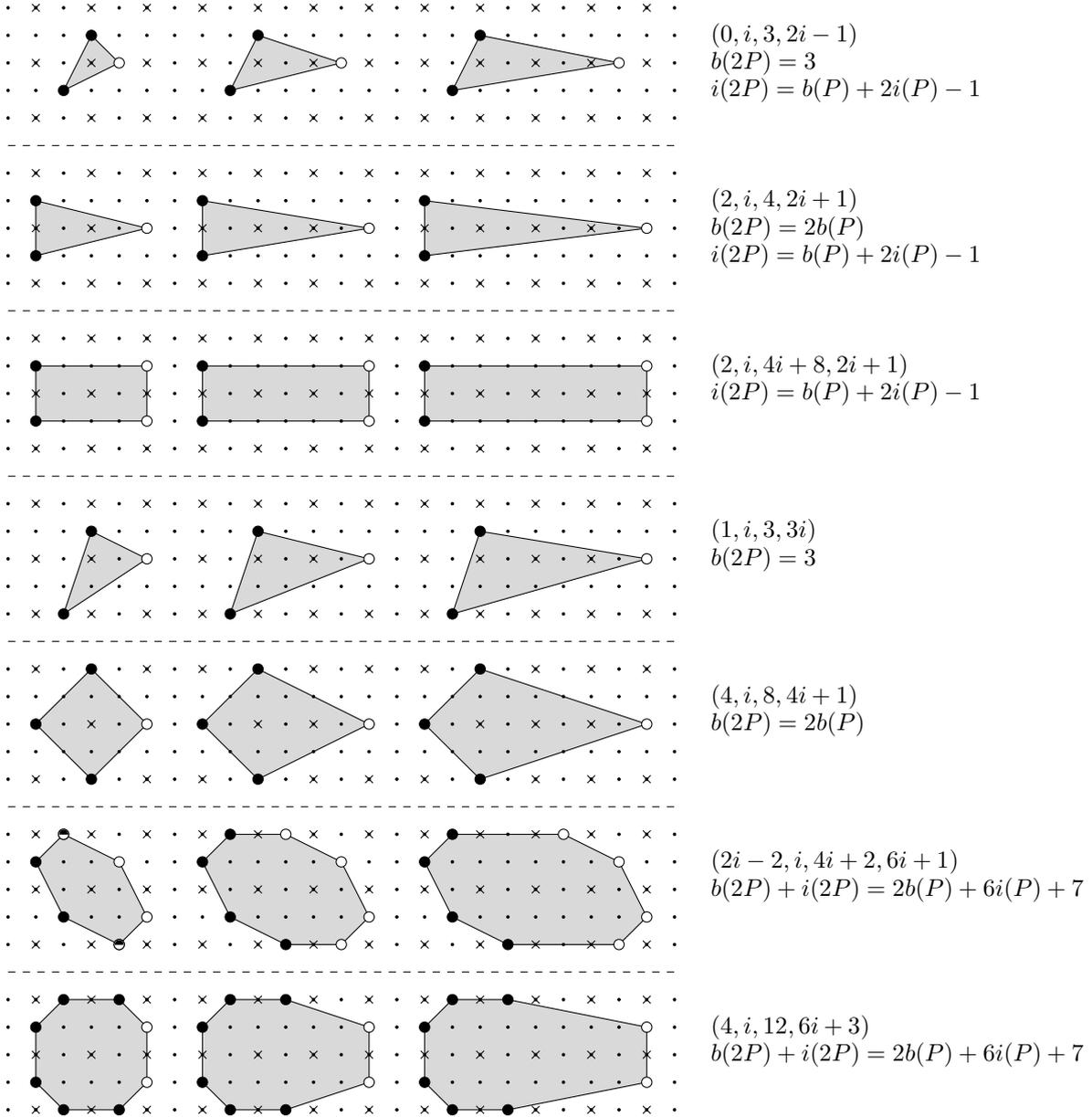

\begin{lemma}\label{lem:hourglass}
    Let \(P\) be a lattice polygon with first and second widths \(w_1\) and \(w_2\), which we assume is a subset of the rectangle \(Q_{w_1,w_2} \coloneqq [0,w_1]\times [0,w_2]\).
    Let \(h \in [2,w_1-2]\) be an integer and let \(h'\) be the minimum of \(h\) and \(w_1-h\), then \(P\) contains at least \(h'-1\) interior points in the line \(x=h\).
\end{lemma}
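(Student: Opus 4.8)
The plan is to reduce the statement to a one‑dimensional estimate on the vertical slice $S_h \coloneqq P \cap \{x = h\}$ and then bound its length. Writing the upper and lower boundaries of $P$ as a concave function $f$ and a convex function $g$ on $[0,w_1]$, so that $S_h$ is the segment from $(h,g(h))$ to $(h,f(h))$ of length $\ell(h) \coloneqq f(h) - g(h)$, the key claim is that $\ell(h) \ge h'$. Granting this, since $2 \le h \le w_1 - 2$ forces the whole open segment $\{h\} \times (g(h), f(h))$ into $\interior(P)$, and an open interval of length $\ell(h)$ contains at least $\ell(h) - 1$ integers, the line $x = h$ carries at least $h' - 1$ interior lattice points, as required. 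Applying the reflection $x \mapsto w_1 - x$ (a unimodular symmetry of $Q_{w_1,w_2}$), I may assume $h \le w_1/2$, so that $h' = h$.

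To bound $\ell(h)$ I use the four contact points guaranteed by the structure theorem from \cite{triangles}: a left vertex $L=(0,l)$, a right vertex $R=(w_1,r)$, a bottom vertex $B=(b,0)$ and a top vertex $T=(t,w_2)$, all lying in $P$. Each segment joining two of these lies in $P$, so each provides a chord bounding $f$ from below or $g$ from above. When both the top and bottom contacts lie to the right of the line, i.e.\ $h \le \min(t,b)$, the chords $LT$ and $LB$ give
\[
\ell(h) \ge \Bigl(l + (w_2 - l)\tfrac{h}{t}\Bigr) - l\Bigl(1 - \tfrac{h}{b}\Bigr) = h\Bigl(\tfrac{w_2 - l}{t} + \tfrac{l}{b}\Bigr) \ge \tfrac{w_2}{w_1}\,h \ge h,
\]
using $t,b \le w_1$ and $w_2 \ge w_1$. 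When instead $h \ge \max(t,b)$ the mirror chords $RT$ and $RB$ give the symmetric bound $\ell(h) \ge \tfrac{w_2}{w_1}(w_1 - h) \ge w_1 - h \ge h$. Thus the only remaining configurations are the two ``straddling'' cases in which $T$ and $B$ lie on opposite sides of the line $x=h$.

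The straddling cases are where the second‑width hypothesis is essential, and handling them is the main obstacle. Here the naive chord bound can be defeated: an adversarial configuration pushes $B$ and $L$ towards the bottom‑left corner and $T$ and $R$ towards the top‑right corner, so that $P$ degenerates to a thin sliver around the diagonal of $Q_{w_1,w_2}$ and $\ell(h)$ becomes small. The plan is to rule this out by contradiction. Assuming $\ell(h) < h$ in a straddling case, I will show the four contacts are forced into narrow corner regions, so that $P$ lies inside a thin strip about a line of slope $\approx w_2/w_1$; I then produce a non‑horizontal primitive lattice functional, of the form $(m,-1)$ or $(m,1)$ with $m$ an integer approximating the slope of the strip, whose oscillation over $P$ is strictly less than $w_2$, contradicting the hypothesis that every dual vector other than $u_1$ has width at least $w_2$. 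The delicate point, and the step I expect to require the most care, is the integer rounding of the slope: one must check that the error incurred by replacing the real slope $w_2/w_1$ with a nearby integer slope stays within the available margin, so that the strict inequality $\width_{(m,-1)}(P) < w_2$ genuinely survives. A clean way to organise this is to compare the values of $x - m y$ at the four contact points and optimise over the finitely many candidate integers $m$.
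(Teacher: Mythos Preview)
Your high-level strategy matches the paper's: assume the slice at $x=h$ is too short and derive a contradiction by exhibiting a dual vector, linearly independent from $(1,0)$, along which $P$ has width strictly less than $w_2$. Your treatment of the two non-straddling configurations via the chords $LT,LB$ (resp.\ $RT,RB$) is correct and is arguably cleaner than the paper, which instead \emph{derives} the straddling configuration from the contradiction hypothesis by showing that the bottom contact cannot lie on the same side of $x=h$ as the top contact.

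The substantive gap is the straddling case, which you only sketch. Two points are worth flagging. First, you plan to search over functionals $(m,-1)$ with $m$ an integer near $w_2/w_1$ and worry about the rounding error; in fact no optimisation is needed. The paper shows that the \emph{single} functional $u=(1,-1)$ always works, regardless of the ratio $w_2/w_1$. The mechanism is not ``$P$ lies in a thin strip around the diagonal'' but rather an explicit containment: from the short-slice hypothesis one pins $P$ inside an hourglass $C_{(0,0)}\cup C_{(w_1,w_2)}$, where each $C_p$ is the affine cone $p+\cone(I_h-p)$ over the short interval $I_h$ on $x=h$. One then checks, by evaluating $u=(1,-1)$ at the four points where the hourglass meets $\partial Q_{w_1,w_2}$, that the $u$-width of the convex hull of this region is $<w_2$. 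The calculation splits into five elementary inequalities (pairwise differences of the $u$-values), each of which is dispatched using only $w_2\ge w_1\ge 2h$ and $y_2-y_1\le h-1$; no approximation of the slope enters. Second, your reduction to ``$\ell(h)\ge h'$'' is slightly stronger than what is needed (and than what the paper proves): the contradiction hypothesis in the paper is that the slice fits inside a length-$(h'-1)$ interval with \emph{integer} endpoints, which gives the extra leverage $y_2=y_1+h'-1$ used repeatedly in the inequalities. Working with a real $\ell(h)<h'$ instead would force you to redo those estimates without that convenient normalisation.

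So the plan is sound, but to complete it you should replace the vague ``optimise over $m$'' step with the concrete hourglass containment and the $(1,-1)$-width computation.
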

\begin{proof}
    Suppose for contradiction that \(P\) contains less than \(h'-1\) interior points in the line \(x=h\).
    Then the line segment \(P \cap \{x=h\}\) must be a subset of \(I_h \coloneqq \{h\}\times[y_1,y_2]\) where \(y_1\) is an integer in \([0,w_2]\) and \(y_2=y_1+h'-1\).
    
    Let \(v\) be a point of \(P\) and \(u\) a point of the line \(x=h\) not contained in this interval.
    Then no point of \(\pen(v,u)\) can be contained in \(P\).
    In particular, if \(v\) has \(x\)-coordinate less than \(h\) then all points of \(P\) with \(x\)-coordinate greater than \(h\) must be contained in the affine cone 
    \[
    C_v \coloneqq v+\cone(I_h-v)
    \]
    and the symmetric result holds when \(v\) has \(x\)-coordinate greater than \(h\).

    By the widths of \(P\) we know that \(P\) has a vertex contained in each edge of the rectangle \(Q_{w_1,w_2}\).
    Either \(y_1>0\) or \(y_2<w_2\), so the points in the upper and lower edge of \(Q_{w_1,w_2}\) cannot both be in the line \(x=h\).
    By reflections and after redefining \(h\) and \(I_h\) if necessary, we may assume that \(P\) contains a vertex \(v_0\) in the upper edge of \(Q_{w_1,w_2}\) with \(x\)-coordinate greater than \(h\).
    Let \(v\) be a point of \(P\) with \(x\)-coordinate less than \(h\), then \(v_0\) and the vertices of \(P\) on the right edge of \(Q_{w_1,w_2}\) are contained in \(C_v\).
    As a result, the point \((w_1,w_2)\) is also contained in \(C_v\), and so \(v\) is in the cone \(C_{(w_1,w_2)}\).

Suppose towards a contradiction, that \(P\) contains a point on the lower edge of \(Q_{w_1,w_2}\), with \(x\)-coordinate greater than \(h\).
Then \((w_1,0)\) must also be contained in \(C_v\) as above, so all points of \(P\) with \(x\)-coordinate less than \(h\) are contained in the intersection of \(C_{(w_1,0)}\) and \(C_{(w_1,w_2)}\).
However, the line through \((w_1,0)\) and \((h,y_1)\) and the line through \((w_1,w_2)\) and \((h,y_2)\) meet at a point with \(x\)-coordinate
\[
\frac{w_2h+w_1(y_1-y_2)}{w_2-y_2+y_1} = \begin{cases}
    \frac{w_2h-w_1(h-1)}{w_2-h+1} & \text{if \(h \leq \frac{w_1}{2}\)}\\
    \frac{w_2h-w_1(w_1-h-1)}{w_2-w_1+h+1} & \text{if \(h > \frac{w_1}{2}\)}
\end{cases}
\]
which is always greater than \(0\).
This shows that \(P\) cannot contain a point in the left edge of \(Q_{w_1,w_2}\) which is the desired contradiction.

Suppose, towards a contradiction that \(P\) contains the point \((h,0)\).
Then \(y_1=0\) and we consider \(C_{(w_1,w_2)}\) which must contain all points of \(P\) with \(x\)-coordinate less than \(h\).
However, the line through \((w_1,w_2)\) and \((h,y_2)\) intersects the \(x\)-axis at the point
\[
\left(\frac{w_2h-w_1y_2}{w_2-y_2},0\right) = 
\begin{cases}
    \left(\frac{w_2h-w_1(h-1)}{w_2-(h-1)},0\right) & \text{if \(h \leq \frac{w_1}{2}\)}\\
    \left(\frac{w_2h-w_1(w_1-h-1)}{w_2-(w_1-h-1)},0\right) & \text{if \(h > \frac{w_1}{2}\)}
\end{cases}
\]
which is always greater than 0.
As above, this prevents \(P\) from containing a point in the left edge of \(Q_{w_1,w_2}\), which is the desired contradiction.

    Now, let \(v'\) be a point of \(P\) with \(x\)-coordinate greater than \(h\).
    We have just shown that a vertex of \(P\) in the lower edge of \(Q_{w_1,w_2}\) has \(x\)-coordinate less than \(h\).
    The cone \(C_{v'}\) must contain both this vertex and some point in the left edge of \(Q_{w_1,w_2}\) so \((0,0)\) is contained in \(C_{v'}\).
    Therefore, \(v'\) is contained in \(C_{(0,0)}\).

    We have shown that \(P\) is a subset of the union of \(C_{(0,0)}\) and \(C_{(w_1,w_2)}\) as depicted in Figure~\ref{fig:hourglass}.
    We will show that the width of this region with respect to \(u=(1,-1)\) is less than \(w_2\) which is the desired contradiction.
    
\begin{figure}[ht]
\centering
\begin{tikzpicture}[x=0.9cm,y=0.9cm]
\draw (0,0) -- (5,0) -- (5,6) -- (0,6) -- cycle;
\filldraw[fill=gray!30] (0,0) -- (7/5,0) -- (2,1) -- (5,2.5) -- (5,6) -- (4,6) -- (2,3) -- (0,1) -- cycle;
\fill[fill=gray!50] (0,0) -- (2,1) -- (5,6) -- (2,3) -- cycle;
\draw[dashed] (2,0) -- (2,6);
\draw[dashed] (2,1) -- (0,0) -- (2,3);
\draw[dashed] (2,1) -- (5,6) -- (2,3);
\draw[very thick] (2,1) -- (2,3);
\node[anchor=north west] at (2,1) {\((h,y_1)\)};
\node[anchor=south east] at (2,3) {\((h,y_2)\)};
\foreach \p in {(0,1),(7/5,0),(4,6),(5,2.5)}{
    \node[draw,circle,inner sep=1pt,fill] at \p { };
}
\node[anchor=east] at (0,1) {\(p_1\)};
\node[anchor=north] at (7/5,0) {\(p_2\)};
\node[anchor=south] at (4,6) {\(p_3\)};
\node[anchor=west] at (5,2.5) {\(p_4\)};
\node[anchor=west] at (2,2.3) {\(I_h\)};
\end{tikzpicture}
\caption{In the proof of Lemma~\ref{lem:hourglass}, the polytope \(P\) is contained in the union of the two shaded affine cones \(C_{(0,0)}\) and \(C_{(w_1,w_2)}\) whose points are at \((0,0)\) and \((w_1,w_2)\) respectively.}
\label{fig:hourglass}
\end{figure}
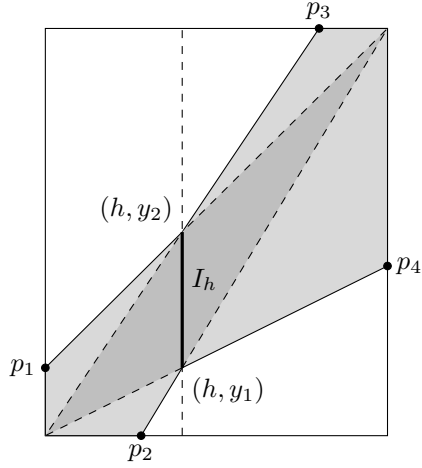

    By reflections, we can now assume that \(h'=h\).
    Notice that the line \(y=\frac{w_2}{w_1}x\) intersects the interval \(I_h\) non-trivially.
    In particular, \(w_1y_1 \leq w_2h \leq w_1y_2\).
    Consider the slope of each edge of \(C_{(0,0)}\) and \(C_{w_1,w_2}\) compared to the line \(y=\frac{w_2}{w_1}x\).
    The edges meeting at \((h,y_1)\) and \((h,y_2)\) have gradients such that these two points can never be a vertex of the convex hull \(P'\) defined
    \[
    P'\coloneqq\conv\left(\left(C_{(0,0)} \cup C_{(w_1,w_2)}\right) \cap Q_{w_1,w_2}\right).
    \]
    Additionally, consider the points \(p_1,\dots, p_4\) in Figure~\ref{fig:hourglass}.
    They are
    \[
    \begin{matrix}
    p_1 = (0,w_2-\tfrac{w_2-y_2}{w_1-h}w_1), & p_2 = (w_1-\tfrac{w_1-h}{w_2-y_1}w_2,0), \\
    p_3 = (\tfrac{h}{y_2}w_2,w_2), & p_4 = (w_1,\tfrac{y_1}{h}w_1).
    \end{matrix}
    \]
    Since the value \(u\) takes increases from left to right along horizontal lines and from top to bottom along vertical ones, \(u \cdot p_1 \leq u \cdot (0,0) \leq u \cdot p_2\) and similarly \(u \cdot p_3 \leq u\cdot (w_1,w_2) \leq u\cdot p_4\).
    Therefore, we need only show that the difference between each pair of values of \(u \cdot p_i\) is less than \(w_2\).

    By considering the value of \(u \cdot (0,0)\) and \(u\cdot (w_1,w_2)\) we see that \(u \cdot p_1\) and \(u \cdot p_3\) are at most zero and that \(u \cdot p_2\) is at least 0.
    If \(u\cdot p_i\) and \(u\cdot p_j\) are both positive or both negative, we need not check \(u \cdot(p_i-p_j)\) or \(u\cdot(p_j-p_i)\) since one of the points will always have a greater difference with some other point (a negative or positive one respectively).
    We now prove that each of the following is less than \(w_2\) by contradiction which completes the proof.
    \[
    u\cdot(p_2-p_1), \quad u\cdot(p_2-p_3), \quad u\cdot(p_2-p_4), \quad u \cdot(p_4-p_1), \quad u\cdot(p_4-p_3).
    \]

    If \(u\cdot(p_2-p_1) \geq w_2\) then
    \[
    w_1-\tfrac{w_1-h}{w_2-y_1}w_2 + w_2-\tfrac{w_2-y_1-h+1}{w_1-h}w_1 \geq w_2.
    \]
    This inequality can be rearranged into the following:
    \[
    w_1y_1(2w_2-w_1-y_1+1) \geq w_2(w_1w_2 + w_1 -2w_1h+h^2).
    \]
    Since the left-hand side is positive we can use the fact that \(w_2h \geq w_1y_1\) to say that this is all less than or equal to \(w_2h(2w_2-w_1-y_1+1)\).
    Dividing by \(w_2\) and rearranging again shows that 
    \[
    h(2w_2-y_1+1+w_1-h) \geq w_1(w_2 + 1).
    \]
    But \(w_1 \geq 2h\) so, after canceling and rearranging again, this implies that \(-1 \geq y_1+h\) which is the desired contradiction.

    If \(u\cdot(p_2-p_3) \geq w_2\) then
    \[
    w_1-\tfrac{w_1-h}{w_2-y_1}w_2 - \tfrac{h}{y_1+h-1}w_2 + w_2 \geq w_2.
    \]
    This inequality can be rearranged into the following:
    \[
    w_2h(2y_1-w_2+h-1) \geq w_1y_1(y_1+h-1).
    \]
    The right-hand side is non-negative so we may assume the left-hand side is too.
    Using the fact that \(w_2h \leq w_1(y_1+h-1)\) we can rearrange to get
    \[
    y_1+h \geq w_2+1
    \]
    which is the desired contradiction.

    If \(u\cdot(p_2-p_4) \geq w_2\) then
    \[
    w_1-\tfrac{w_1-h}{w_2-y_1}w_2 - w_1+\tfrac{y_1}{h}w_1 \geq w_2
    \]
    which we can rearrange into
    \[
    w_1y_1(w_2-y_1) \geq w_2h(w_2-y_1+w_1-h).
    \]
    Since we know that \(w_2h \geq w_1y_1\) and the right-hand side is non-negative we get
    \[
    w_2-y_1 \geq w_2-y_1+w_1-h
    \]
    which is a contradiction since \(w_1 > h\).

    If \(u\cdot(p_4-p_1) \geq w_2\) then
    \[
    w_1-\tfrac{y_1}{h}w_1-\tfrac{w_2-y_1-h+1}{w_1-h}w_1+w_2 \geq w_2
    \]
    which can be rearranged into
    \[
     h(w_1-w_2-1) \geq y_1(w_1-2h).
    \]
    The right-hand side is non-negative and the left-hand side is negative which is the desired contradiction.

    If \(u\cdot(p_4-p_3) \geq w_2\) then
    \[
    w_1-\tfrac{y_1}{h}w_1 - \tfrac{h}{y_1+h-1}w_2+w_2 \geq w_2
    \]
    which can be rearranged into
    \[
    w_1h(y_1+h-1) \geq w_2h^2+w_1y_1(y_1+h-1)
    \]
    but \(w_1(y_1+h-1) \geq w_2h\) so we get
    \[
    w_1h(y_1+h-1) \geq w_2h(h+y_1)
    \]
    which is the desired contradiction.
\end{proof}

We introduce new notation for the following proofs.
Let \(p_h(P)\), \(b_h(P)\) and \(i_h(P)\) denote the number of points, boundary points and interior points of \(P\) in the line \(x=h\).

\begin{lemma}\label{lem:trapezium}
    Let \(P\) be a lattice polygon with vertices \((0,y_1)\), \((0,y_2)\), \((2,y_3)\), \((2,y_4)\) where \(y_1 \leq y_2\) and \(y_3 \leq y_4\), then 
    \[
    p_0(P) + p_2(P) = p_1(P)+i_1(P)+2.
    \]
\end{lemma}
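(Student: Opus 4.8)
The plan is to reduce everything to one-dimensional lattice-point counts on the three vertical lines $x=0$, $x=1$, $x=2$, and then to a clean arithmetic identity about half-integers. First I would record the two easy counts: since the edges of $P$ on the lines $x=0$ and $x=2$ are vertical lattice segments, we have $p_0(P)=y_2-y_1+1$ and $p_2(P)=y_4-y_3+1$, so that
\[
p_0(P)+p_2(P)=(y_2+y_4)-(y_1+y_3)+2.
\]

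Next I would describe the slice $P\cap\{x=1\}$. Because $P$ is the convex hull of the four given vertices, its lower edge runs from $(0,y_1)$ to $(2,y_3)$ and its upper edge from $(0,y_2)$ to $(2,y_4)$; evaluating these at $x=1$ shows the slice is the segment from $(1,a)$ to $(1,b)$ with $a=\tfrac12(y_1+y_3)$ and $b=\tfrac12(y_2+y_4)$. The hypotheses $y_1\le y_2$ and $y_3\le y_4$ give $a\le b$, and, assuming $P$ is genuinely two-dimensional, in fact $a<b$; the crucial structural feature is that $a,b\in\tfrac12\ZZ$. Since the vertical edges of $P$ lie on $x=0$ and $x=2$, a point $(1,y)$ lies on $\partial P$ exactly when $y\in\{a,b\}$, so $p_1(P)$ counts the integers in the closed interval $[a,b]$ and $i_1(P)$ counts the integers in the open interval $(a,b)$.

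With this setup the claimed identity $p_0(P)+p_2(P)=p_1(P)+i_1(P)+2$ is, after substituting the computation above, equivalent to the purely arithmetic statement $p_1(P)+i_1(P)=2(b-a)$. This is the key step. I would prove it uniformly using the floor/ceiling formulas
\[
p_1(P)=\lfloor b\rfloor-\lceil a\rceil+1,\qquad i_1(P)=\lceil b\rceil-\lfloor a\rfloor-1,
\]
which are valid because $a<b$. Adding these gives $p_1(P)+i_1(P)=(\lfloor b\rfloor+\lceil b\rceil)-(\lfloor a\rfloor+\lceil a\rceil)$, and the identity $\lfloor x\rfloor+\lceil x\rceil=2x$ holds for every $x\in\tfrac12\ZZ$ (it is immediate both when $x$ is an integer and when $x=k+\tfrac12$). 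Hence $p_1(P)+i_1(P)=2b-2a$, which completes the argument.

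The only real subtlety — and the single place where care is needed — is the parity bookkeeping for whether the endpoints $a,b$ are themselves integers, since this controls how many boundary points lie on $x=1$ and changes the individual values of $p_1(P)$ and $i_1(P)$. The point of routing the argument through $\lfloor x\rfloor+\lceil x\rceil=2x$ for $x\in\tfrac12\ZZ$ is precisely that it absorbs all of these cases at once, so no separate casework on the parities of $y_1+y_3$ and $y_2+y_4$ is required. I would also note in passing that the degenerate case $a=b$ forces $y_1=y_2$ and $y_3=y_4$, i.e.\ $P$ collapses to a segment, which is excluded by $P$ being a polygon.
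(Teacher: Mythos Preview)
Your argument is correct and takes a genuinely different route from the paper. The paper proves the identity via Pick's theorem: it computes the normalised volume $2(y_2-y_1+y_4-y_3)$ and the total boundary count $y_2-y_1+y_4-y_3+2+b_1(P)$, applies Pick to obtain $i(P)=\tfrac12(y_2-y_1+y_4-y_3)-\tfrac12 b_1(P)$, and then uses $i(P)=i_1(P)$ and $p_1(P)=i_1(P)+b_1(P)$ to conclude. You instead work entirely on the slice $x=1$: after identifying its endpoints as the half-integers $a=\tfrac12(y_1+y_3)$ and $b=\tfrac12(y_2+y_4)$, you reduce the claim to $p_1(P)+i_1(P)=2(b-a)$ and dispatch it with the identity $\lfloor x\rfloor+\lceil x\rceil=2x$ for $x\in\tfrac12\ZZ$. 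Your approach is more elementary in that it avoids Pick's theorem altogether and makes transparent exactly where the half-integrality is used; the paper's approach, on the other hand, situates the lemma as a direct specialisation of the global area--lattice-point relationship, which fits naturally with how the lemma is later applied. Both are short, and either would serve.
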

\begin{proof}
    The normalised volume of \(P\) is \(2(y_2-y_1+y_4-y_3)\) and the number of boundary points of \(P\) is \(y_2-y_1+y_4-y_3+2+b_1(P)\).
    Using Pick's theorem we can combine these to show that the number if interior points of \(P\) is \(\frac12(y_2-y_1+y_4-y_3) - \frac12 b_1(P)\).
    The number of points of \(P\) in \(x=0\) and \(x=2\) combined is equal to the number of boundary points of \(P\) minus \(b_1(P)\) so
    \[
    p_0(P) + p_2(P) = y_2-y_1+y_4-y_3+2 = 2i(P) +b_1(P)+2
    \]
    which equals \(p_1(P)+i_1(P)+2\) since \(i(P) = i_1(P)\).
\end{proof}

\begin{lemma}\label{lem:hourglass_finiteness}
    Fix integers \(w > 1\), \(k > 0\) and \(1 \leq h \leq w-1\).
    Up to affine equivalence, there are finitely many polygons \(P\) contained in the strip \([0,w] \times \RR\) with width \(w\) and less than \(k\) lattice points in the line \(x=h\).
\end{lemma}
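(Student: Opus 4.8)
The plan is to show that every such $P$ is affine equivalent to a lattice polygon lying in a single fixed bounded rectangle depending only on $w$, $h$, $k$; since a bounded region contains finitely many lattice points, and hence finitely many lattice polygons, this gives the finiteness claim.

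First I would record the geometry forced by the hypotheses. Because $P \subseteq [0,w]\times\RR$ we have $\width_{(1,0)}(P)\le w$, and since $w=\width(P)\le\width_{(1,0)}(P)$ this forces $\width_{(1,0)}(P)=w$; thus the projection of $P$ to the $x$-axis is exactly $[0,w]$, with the leftmost and rightmost faces of $P$ lying on $x=0$ and $x=w$. For $x\in[0,w]$ set $f(x)=\max\{y:(x,y)\in P\}$ and $g(x)=\min\{y:(x,y)\in P\}$; by convexity of $P$ the function $f$ is concave, $g$ is convex, and hence the slice length $L(x):=f(x)-g(x)$ is concave on $[0,w]$.

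Next I would bound $L$. Since $h$ is an integer with $1\le h\le w-1$, the lattice points of $P$ on the line $x=h$ are exactly the $(h,j)$ with $j\in\ZZ\cap[g(h),f(h)]$, of which there are more than $L(h)-1$ (and if there are none, then $L(h)<1$); the hypothesis of fewer than $k$ such points therefore gives $L(h)<k$. Concavity propagates this across the whole strip: for $x'\in[0,h]$ one writes $h$ as a convex combination of $x'$ and $w$ and applies concavity to obtain $L(x')\le\tfrac{w}{w-h}L(h)$, and symmetrically $L(x')\le\tfrac{w}{h}L(h)$ for $x'\in[h,w]$. Hence $L(x)<C:=\tfrac{w}{\min(h,w-h)}\,k$ for all $x\in[0,w]$, so every vertical slice of $P$ is short.

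It remains to pin down the vertical position of $P$, and this is the step I expect to be the main obstacle, since a priori $P$ may drift or shear arbitrarily far in the $y$-direction while keeping all slices short. I would normalize using only lattice-preserving maps. The faces of $P$ on $x=0$ and $x=w$ are lattice faces, so $g(0),g(w)\in\ZZ$; translating by the lattice vector $(0,-g(0))$ I may assume $g(0)=0$, and applying the unimodular shear $(x,y)\mapsto(x,y-mx)$ with $m\in\ZZ$ chosen nearest to $g(w)/w$ I may further arrange $g(0)=0$ and $|g(w)|\le w/2$ for the resulting lower hull (slice lengths, and hence $C$, are unaffected by a shear). Convexity of $g$ then gives $g(x)\le\max(0,g(w))\le w/2$, while concavity of $f=g+L$ together with $f(0)=L(0)\ge 0$ and $f(w)\ge g(w)\ge -w/2$ gives $f(x)\ge -w/2$; combined with $L<C$ this yields $-w/2-C<y<w/2+C$ for every $(x,y)\in P$. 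Thus $P$ lies in the fixed rectangle $[0,w]\times[-w/2-C,\,w/2+C]$, which contains only finitely many lattice polygons, and so there are finitely many possibilities for $P$ up to affine equivalence.
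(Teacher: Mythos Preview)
Your argument is correct and takes a genuinely different route from the paper.

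The paper normalises differently: it shears so that the short slice \(P\cap\{x=h\}\) sits in \(\{h\}\times[0,k]\) and a leftmost vertex \(v_0=(0,y_0)\) has \(y_0\in[0,h-1]\), then uses the penumbra\slash cone geometry developed earlier (the ``hourglass'' picture of Lemma~\ref{lem:hourglass}): everything to the right of \(x=h\) is trapped in the cone \(C_{v_0}=v_0+\cone(I_h-v_0)\), which forces a rightmost vertex \(v_w\) into a bounded interval, and then a second cone \(C_{v_w}\) traps everything to the left. Finitely many choices for \(v_0\), then for \(v_w\), each giving a bounded region.

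You instead exploit directly that the slice-length function \(L(x)=f(x)-g(x)\) is concave, so a single short slice at \(x=h\) propagates to a uniform bound \(L<C=\tfrac{w}{\min(h,w-h)}k\) on every slice; your normalisation then pins the vertical position by elementary convexity\slash concavity of \(g\) and \(f\) at the endpoints. This is more elementary and self-contained: it avoids the cone machinery entirely and gives an explicit bounding rectangle in one shot rather than via a two-step enumeration of \(v_0\) and \(v_w\). The paper's approach, by contrast, keeps the argument in the same geometric language (penumbras, hourglass cones) used throughout Sections~\ref{sec:growing_alg} and~\ref{sec:ehr_int_pnts}, which has some expository value there.

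One minor remark: both your proof and the paper's silently assume \(P\) is a lattice polygon (you use \(g(0),g(w)\in\ZZ\); the paper uses that \(v_0\) ranges over finitely many points). The lemma statement does not say this, but it is how the lemma is applied (to \(2P\)) in Proposition~\ref{prop:ehr_diag_bound}, so this is a gap in the statement rather than in your proof.
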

\begin{proof}
    By a shear about the \(y\)-axis and a translation we may assume that the intersection of \(P\) and the line \(x=h\) is a subset of the interval \(I_h \coloneqq \{h\} \times [0,k]\) and that \(P\) has a vertex \(v_0 = (0,y_0)\) with \(y_0 \in [0,h-1]\).
    As in the proof of Lemma~\ref{lem:hourglass} all points of \(P\) with \(x\)-coordinate greater than \(h\) must be contained in the cone \(C_{v_0} \coloneqq v_0+\cone(I_h-v_0)\).
    Since \(P\) has width \(w\), it must have some vertex \(v_w=(w,y_w)\) contained in the interval where \(C_{v_0}\) intersects the line \(x=w\).
    Once again, all points of \(P\) with \(x\)-coordinate less than \(h\) must be contained in the cone \(C_{v_w}\).

    There are finitely many choices for \(v_0\) and finitely many choices for \(v_w\) in each case.
    The conditions on \(P\) described above define a finite region which must contain \(P\).
    Thus, there are only finitely many such polygons \(P\).
\end{proof}
\begin{proposition}\label{prop:ehr_diag_bound}
Let \(P\) be a denominator 2, finitely growable polygon.
Apart from finitely many exceptions, \(p(2P) \leq 2b(P)+6i(P)+7\).
\end{proposition}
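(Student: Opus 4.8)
The plan is to pass to the lattice polygon $Q \coloneqq 2P$ and rewrite everything as vertical column counts. Doubling gives a bijection $Q\cap\ZZ^2 \leftrightarrow P\cap\tfrac12\ZZ^2$, so $p(2P)=|Q\cap\ZZ^2|$, whereas $P\cap\ZZ^2$ corresponds to the points of $Q$ with \emph{both} coordinates even; thus $b(P)+i(P)$ counts the even sublattice of $Q\cap\ZZ^2$ and $i(P)$ its interior part. The target $p(2P)\le 2b(P)+6i(P)+7$ is therefore a comparison between all lattice points of $Q$ and its even ones. First I would fix coordinates using the structural result of \cite{triangles}: after a unimodular transformation $P\subseteq[0,w_1]\times[0,w_2]$, where $w_1\le w_2$ are the first and second widths, with a vertex on each edge of the rectangle. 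Finite growability means $P$ is not contained in an integer-bounded unit strip. Since the finitely growable polygons with $i(P)=0$ are already known to be finite in number, I may assume $i(P)\ge 1$ and establish the bound for all but finitely many of these.

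Writing $p_h,i_h$ for the point and interior counts of $Q$ in the line $x=h$, the even columns $x=0,2,\dots$ of $Q$ are exactly the integer columns of $P$, and a point-by-point parity count shows $\sum_{j}p_{2j}=2\bigl(b(P)+i(P)\bigr)$ up to a bounded correction coming from the rounding at the top and bottom of each column. Hence it suffices to bound the odd-column total $\sum_h p_{2h+1}$ — the number of half-integral points of $P$ lying off its integer columns — by roughly $4i(P)+7$; indeed, for the extremal family $(4,i,12,6i+3)$ of Figure~\ref{fig:inf_families} one checks that this odd-column total is exactly $4i+7$, so the reduction is sharp. The engine for this is Lemma~\ref{lem:trapezium}: on each width-$2$ strip $2h\le x\le 2h+2$ of $Q$ meeting $Q$ in a trapezoid it gives $p_{2h+1}=p_{2h}+p_{2h+2}-i_{2h+1}-2$, so summing over strips rewrites $\sum_h p_{2h+1}$ as a controlled linear combination of the even-column counts and the interior counts $i_{2h+1}$; substituting the parity estimate above then yields the desired comparison.

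To keep the number of strips under control I would dispose of large first width separately using Lemma~\ref{lem:hourglass}: when $w_1$ is large, each central column of $Q$ carries at least $h'-1$ interior points, and a parity count shows $i(P)$ grows quadratically in $w_1$ while $p(2P)$ grows only linearly per column, so the inequality becomes slack. This leaves only boundedly many columns to analyse. For small $w_1$ with unbounded $w_2$ — the regime of the infinite families in Figure~\ref{fig:inf_families}, which already saturate the bound — the strip identity of the previous paragraph must be carried out exactly, and the additive constant $7$ emerges from the two extreme strips. Finally, when both $w_1$ and $w_2$ are bounded, $P$ lies in a fixed rectangle and there are only finitely many such polygons (as in Lemma~\ref{lem:hourglass_finiteness}); these furnish the finitely many permitted exceptions.

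The hard part will be the two extreme strips, where $P$ meets the left and right edges of its bounding rectangle and the clean trapezoid identity of Lemma~\ref{lem:trapezium} fails, together with any intermediate strip that contains a vertex of $Q$ in its interior column. Capping the total excess of these pieces by the absolute constant $7$, uniformly in $w_2$, is the crux: I expect to bound each end contribution by applying Scott's inequality (Theorem~\ref{thm:Scotts}) to the width-one end pieces of $P$ and to absorb the per-column parity rounding there. It is precisely this irregular end behaviour at small width that simultaneously produces the constant $7$ and is responsible for the finitely many exceptions to the inequality.
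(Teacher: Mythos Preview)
Your framework matches the paper's: pass to $Q=2P$, decompose $p(Q)$ into column counts, use Lemma~\ref{lem:trapezium} to relate odd and even columns, invoke Lemma~\ref{lem:hourglass} for large width, and appeal to Lemma~\ref{lem:hourglass_finiteness} for the residual finite set. But two of your specific choices would prevent the argument from closing with the correct coefficients.

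\textbf{Orientation of the trapezium strips.} You apply Lemma~\ref{lem:trapezium} to strips $[2h,2h+2]$ of $Q$, obtaining $p_{2h+1}$ in terms of $p_{2h},p_{2h+2}$ and the \emph{odd} interior count $i_{2h+1}(Q)$. The paper instead centres the strips on the even columns, using $[2h-1,2h+1]$ and deriving
\[
p_{2h-1}(2P)+p_{2h+1}(2P)\le p_{2h}(2P)+i_{2h}(2P)+2.
\]
This orientation is the crux: the even column $x=2h$ of $Q$ is exactly the integer column $x=h$ of $P$, so one has the sharp per-column bounds $p_{2h}(2P)\le 2i_h(P)+b_h(P)+1$ and $i_{2h}(2P)\le 2i_h(P)+1$. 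Summing these is what produces the coefficients $2b(P)+6i(P)$. Your odd-middle term $i_{2h+1}(Q)$ has no direct link to the lattice data of $P$, and if you carry your version through you will find an unwanted extra multiple of $b(P)$ that cannot be removed; in particular the target ``odd-column total $\le 4i(P)+7$'' is not what falls out.

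\textbf{The rounding error is $O(w)$, not $O(1)$.} Your parity estimate $\sum_j p_{2j}(Q)\approx 2(b(P)+i(P))$ has a $+1$ defect \emph{per column}, so the total correction is linear in the width $w$, not an absolute constant. The paper keeps this explicit: after the telescoping one has
\[
p(2P)\le 2b(P)+6i(P)+5(w-1)+2-\sum_{h=1}^{w-2}p_{2h+1}(2P),
\]
and the point of Lemma~\ref{lem:hourglass} is to lower-bound the \emph{subtracted} sum by a quadratic in $w$, which eventually beats the linear term $5(w-1)$ once $w$ is large enough (the paper gets thresholds $w\ge 10,11,12$ in the three boundary cases). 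Your plan to absorb this slack at the two end strips, or via Scott's inequality, cannot work: the defect is spread across all columns, not concentrated at the ends, and Scott's inequality plays no role here. What you call ``$i(P)$ grows quadratically'' is morally the same phenomenon, but the paper routes it through a lower bound on the subtracted odd-column sum of $2P$ rather than through $i(P)$ directly, which is what makes the bookkeeping close exactly at the constant $7$.
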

\begin{proof}
    By a change of basis we may assume that \(\width(P) = \width_{(1,0)}(P)\) and that \((1,0) \cdot P \subseteq [0,w]\) for some integer \(w\) such that
    \begin{itemize}
        \item[(A)] \((1,0) \cdot P = [0,w]\),
        \item[(B)] \((1,0) \cdot P = [0,w-\frac12]\) or
        \item[(C)] \((1,0) \cdot P = [\frac12,w-\frac12]\).
    \end{itemize}
    We prove the bound by bounding the number of points of \(2P\) in each line \(x=0,1,\dots,2w\).
    
    First notice that by Lemma~\ref{lem:trapezium}, for any \(h = 1, \dots, w-1\)
    \[
    p_{2h-1}(2P) + p_{2h+1}(2P) \leq p_{2h}(2P)+i_{2h}(2P) + 2.
    \]
    This allows us to eliminate positive odd terms from \(p(2P) = \sum_{h=0}^{2w} p_h(2P)\) as follows:
    \begin{align*}
    p(2P) = & \sum_{h=0}^w p_{2h}(2P) + \sum_{h=1}^{w-1}(p_{2h-1}(2P) + p_{2h+1}(2P)) - \sum_{h=1}^{w-2}p_{2h+1}(2P)\\
     \leq & \sum_{h=0}^w p_{2h}(2P) + \sum_{h=1}^{w-1}(p_{2h}(2P)+i_{2h}(2P)+2) - \sum_{h=1}^{w-2}p_{2h+1}(2P).
    \end{align*}
    
    Now observe that for any integer \(0<h<w\) the number of points \(p_{2h}(2P)\) is at most \(2i_h(P) + b_h(P) +1\) and the number of interior points \(i_{2h}(2P)\) is at most \(2i_h(P)+1\).
    We can simplify the previous inequality and substitute these bounds into it to obtain
    \[
    p(2P) \leq p_0(2P)+p_{2w}(2P)+6i(P) + 5(w-1) + \sum_{h=1}^{w-1} 2b_h(P) - \sum_{h=1}^{w-2}p_{2h+1}(2P).
    \]

    The number of points \(p_0(2P)\) and \(p_{2w}(2P)\) are bounded by \(2b_0(P)+1\) and \(2b_w(P)+1\) respectively. 
    Thus we get the following:
    \[
    p(2P) \leq 2b(P) + 6i(P) +5(w-1)+2-\sum_{h=1}^{w-2}p_{2h+1}(2P).
    \]

    If \(w=2\) then \(5(w-1)+2-\sum_{h=1}^{w-2}p_{2h+1}(2P)\) is at most \(7\), which proves the result in this case.
    For larger \(w\) we find a lower bound for \(\sum_{h=1}^{w-2}p_{2h+1}(2P)\) using Lemma~\ref{lem:hourglass}.
    This bound depends on which case out of (A), (B) and (C) we are in.
    First notice that for any odd integer \(3 \leq h \leq w-3\)
    \begin{itemize}
        \item[(A)] \(p_h(2P) \geq \begin{cases}
            h-1 & \text{if \(h \leq w\)}\\
            (2w-h) - 1& \text{if \(h > w\)}
        \end{cases}\)
        \item[(B)] \(p_h(2P) \geq \begin{cases}
            h-1 & \text{if \(h < w-\tfrac12\)}\\
            (2w-h)-2 & \text{if \(h > w-\tfrac12\)}
        \end{cases}\)
        \item[(C)] \(p_h(2P) \geq \begin{cases}
            h-2 & \text{if \(h \leq w\)}\\
            (2w-h)-2 & \text{if \(h > w\).}
        \end{cases}\)
    \end{itemize}
    We sum each of these, separating the cases where \(w\) is odd and even, to get
    \begin{itemize}
        \item[(A)] \(\sum_{h=1}^{w-2}p_{2h+1}(2P) \geq \begin{cases}
           \tfrac12w^2 - w & \text{if \(w\) even}\\
           \tfrac12w^2
           - w + \frac12 & \text{if \(w\) odd}
        \end{cases}\)
        \item[(B)] \(\sum_{h=1}^{w-2}p_{2h+1}(2P) \geq \frac12w^2 - \frac32w + 1\)
        \item[(C)] \(\sum_{h=1}^{w-2}p_{2h+1}(2P) \geq \begin{cases}
           \frac12 w^2 - 2w + 2 & \text{if \(w\) even}\\
            \frac12 w^2 - 2w + \frac52& \text{if \(w\) odd.}
        \end{cases}\)
    \end{itemize}
    Applying these bounds to \(p(2P)\) shows that
    \begin{itemize}
        \item[(A)] \(p(2P) \leq 2b(P) + 6i(P) + \begin{cases}
            - \frac12w^2 + 6w - 3 & \text{if \(w\) even}\\
            - \frac12w^2 + 6w - \frac72 & \text{if \(w\) odd}
        \end{cases}\)
        \item[(B)] \(p(2P) \leq 2b(P) + 6i(P)
            - \frac12w^2 + \frac{13}{2}w - 4\)
        \item[(C)] \(p(2P) \leq 2b(P) + 6i(P) + \begin{cases}
            - \frac12w^2 + 7w -5 & \text{if \(w\) even}\\
            - \frac12w^2 + 7w - \frac{11}{2} & \text{if \(w\) odd} 
        \end{cases}\)
    \end{itemize}
    The polynomials in \(w\) are less than or equal to \(7\) when
    \[
    \text{(A)} \quad w \geq 10, \quad \text{(B)} \quad w \geq 11, \quad \text{(C)} \quad w \geq 12.
    \]
    Therefore, if \(p(2P) > 2b(P) + 6i(P) +7\) then the width of \(2P\) is at most \(20\) and it contains less than \(5(w-1)-5\) lattice points in the hyperplanes \(x=3,5,\dots,2w-3\) in total.
    In particular, \(2P\) contains less than \(5(w-1)-5\) points in the hyperplane \(x=3\).
    By Lemma~\ref{lem:hourglass_finiteness} this shows exceptions are finite.
\end{proof}

Finally, we prove the last two bounds in Theorem~\ref{thm:hip_finlap_main}.

\begin{proposition}\label{prop:ehr_h_v_bounds}
    Let \(P\) be a polygon with denominator \(2\) and at least one interior point.
    Then \(b(2P) \geq \max\{3,2b(P)\}\) and \(i(2P) \geq b(P) + 2i(P)-1\). 
\end{proposition}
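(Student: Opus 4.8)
The common thread is the doubling map $x\mapsto 2x$, which sends $P$ to the lattice polygon $2P$ and identifies $\tfrac12\ZZ^2$ with $\ZZ^2$, carrying $\ZZ^2$ onto $2\ZZ^2$. Under it, the lattice points of $2P$ lying in $2\ZZ^2$ are exactly the images of lattice points of $P$; I will call the remaining lattice points of $2P$ \emph{odd}. Thus $b(2P)$ and $i(2P)$ count the half-integral boundary and interior points of $P$, and of these precisely $b(P)$ and $i(P)$ come from lattice points of $P$.

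For the inequality $b(2P)\ge\max\{3,2b(P)\}$ I would first note that $b(2P)\ge 3$, since $2P$ is a two-dimensional lattice polygon and so has at least three vertices. For $b(2P)\ge 2b(P)$ I would reuse the argument of Proposition~\ref{prop:inf_half_Ehr_1}: the $b(P)$ lattice points on $\partial P$ cut the boundary cycle into $b(P)$ arcs, and each arc carries at least one odd half-integral point. Indeed, if some arc between consecutive lattice boundary points $v,v'$ carried none, it would contain no half-integral point in its relative interior and hence be the single segment $[v,v']$ with $2(v'-v)$ primitive; but $2(v'-v)\in 2\ZZ^2$ is even and so not primitive. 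Counting the $b(P)$ even points $2v$ together with these $\ge b(P)$ odd points gives $b(2P)\ge 2b(P)$.

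For $i(2P)\ge b(P)+2i(P)-1$ I would write $g:=i(2P)-i(P)$ for the number of odd interior half-integral points, so the claim becomes $g\ge b(P)+i(P)-1=|P\cap\ZZ^2|-1$. Put $Q:=\conv(P\cap\ZZ^2)$; its lattice points are exactly those of $P$, say $i_Q$ interior to $Q$ and $b_Q$ on $\partial Q$, and since $Q\subseteq P$ is full-dimensional one has $\interior Q\subseteq\interior P$. Writing $t$ for the number of boundary lattice points of $Q$ lying in $\interior P$ gives $i(P)=i_Q+t$. Suppose $\dim Q=2$ and fix a unimodular triangulation. Each edge is primitive, so its midpoint is odd, and the midpoint of an \emph{internal} edge lies in $\interior Q\subseteq\interior P$; distinct edges have distinct midpoints. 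An Euler-characteristic count (via Pick's theorem) shows there are $3i_Q+b_Q-3$ internal edges, whence $g\ge 3i_Q+b_Q-3$. If $i_Q\ge1$ this already equals or exceeds $i_Q+b_Q-1$, as needed. If $i_Q=0$ then $t=i(P)\ge1$, so some boundary lattice point $w_0$ of $Q$ lies in $\interior P$; the midpoints of the two boundary edges of $Q$ at $w_0$ lie on open segments from $w_0\in\interior P$ to its two neighbours, hence in $\interior P$, and being boundary-edge midpoints they are new. This yields $g\ge(b_Q-3)+2=b_Q-1$, again as required.

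The degenerate cases $\dim Q\le1$ are where the triangulation is unavailable, and I expect them to be the main obstacle. If $Q$ is a point then $b(P)=0$, $i(P)=1$, and $i(2P)\ge1$ holds because the interior lattice point of $P$ doubles to one of $2P$. If $Q$ is a segment on a line $\ell$, then $i(P)\ge1$ forces $P$ to meet both open sides of $\ell$; as all lattice points of $P$ lie on $\ell$, the polygon $P$ is confined to the strip of height one about $\ell$ and $2P$ to the strip of lattice height two, so every interior lattice point of $2P$ lies on the doubled line. I would then count these directly in terms of how far $P$ overhangs its two extreme lattice points and obtain the equality $i(2P)=b(P)+2i(P)-1$, matching the extremal families of Figure~\ref{fig:inf_families}. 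The care-intensive points are verifying, in the $i_Q=0$ subcase, that the two extra midpoints are genuinely interior and not already counted, and carrying out the segment count exactly.
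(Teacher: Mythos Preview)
Your argument for \(b(2P)\ge\max\{3,2b(P)\}\) matches the paper's. For the second inequality your route genuinely differs: the paper takes \(Q\) to be the convex hull of the \emph{interior} lattice points of \(P\), counts \(|2Q\cap\ZZ^2|\ge 2i(P)-1\) via Pick/Ehrhart, and then obtains \(b(P)\) further interior points of \(2P\) as midpoints of segments joining the boundary lattice points of \(P\) to \(\partial Q\). Because this \(Q\) sits entirely in \(\interior P\), no case analysis on \(\dim Q\) versus \(i_Q\) is needed. Your choice \(Q=\conv(P\cap\ZZ^2)\) with a unimodular triangulation is a valid alternative and gives a sharper lower bound in the \(\dim Q=2,\ i_Q\ge1\) case, at the cost of the extra subcases.

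There is, however, a genuine gap in your \(\dim Q=1\) case. The assertion ``all lattice points of \(P\) lie on \(\ell\), so \(P\) is confined to the strip of height one about \(\ell\)'' is false. Take
\[
P=\conv\bigl((-\tfrac12,0),(\tfrac32,0),(\tfrac12,\tfrac32),(\tfrac12,-\tfrac12)\bigr).
\]
Then \(P\cap\ZZ^2=\{(0,0),(1,0)\}\subset\ell=\{y=0\}\), both are interior, yet \(P\) reaches \(y=\tfrac32\); moreover \(2P\) has interior lattice points at \(y=1\) and \(y=2\), so your conclusion ``every interior lattice point of \(2P\) lies on the doubled line'' and the predicted equality \(i(2P)=b(P)+2i(P)-1\) both fail here (one gets \(i(2P)=7\), not \(3\)). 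The repair is simple and in the spirit of your other cases: if \(Q\) is a segment with \(n\) lattice points, the \(n-1\) midpoints of consecutive lattice points of \(Q\) are odd and lie in the relative interior of \(Q\); since \(P\) has points on both sides of \(\ell\), the relative interior of \(P\cap\ell\) (hence of \(Q\)) lies in \(\interior P\), giving \(g\ge n-1=|P\cap\ZZ^2|-1\) as required.
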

\begin{proof}
    By definition \(2P\) is a lattice polygon so \(b(2P) \geq 3\).
    If \(b(P) \geq 2\), consider two adjacent boundary lattice points of \(P\) (i.e. one can walk from one to the other along the boundary of \(P\) without touching another lattice point).
    These are either on the same edge of \(P\), or there is at least one half-integral vertex on the boundary between them.
    In either case, there is a point of \(\frac12 \ZZ\) on the boundary of \(P\) between them.
    Therefore, \(b(2P) \geq 2b(P)\) and the first inequality holds.

    Let \(Q\) be the convex hull of the interior lattice points of \(P\).
    If \(Q\) is a point or a line then \(2Q\) contains \(2i(P)-1\) points.
    Otherwise, \(2Q\) contains \(3i(P)+i(Q)-3\) 
    points by the considering the Ehrhart polynomial of \(Q\) and Pick's theorem.
    Since \(Q\) is a polygon, \(P\) has at least three interior points so \(3i(P)+i(Q)-3 \geq 2i(P) - 1\).
    Therefore, \(2Q\) contains at least \(2i(P)-1\) lattice points and all of these are interior points of \(2P\).
    
    Now, consider subdividing the region \(P \setminus Q\) in such a way that each boundary lattice point of \(P\) has a corresponding line to a boundary lattice point of \(Q\) and these lines do not intersect outside of \(Q\).
    Each such line has a half-integral point at its midpoint which is \(\frac12\) times an interior point of \(2P\) not contained in \(2Q\).
    This gives at least an additional \(b(P)\) interior points in \(2P\).
    Thus, the second inequality holds.
\end{proof}

\bibliographystyle{alpha}
\bibliography{hip-finlap}

\begin{thebibliography}{AKW17}

\bibitem[AKW17]{half-int-intersections}
Gennadiy Averkov, Jan Kr\"umpelmann, and Stefan Weltge.
\newblock Notions of maximality for integral lattice-free polyhedra: the case of dimension three.
\newblock {\em Math. Oper. Res.}, 42(4):1035--1062, 2017.

\bibitem[BH20]{k-empty-polygons}
Lukas Braun and Daniel H\"attig.
\newblock Canonical threefold singularities with a torus action of complexity one and {$k$}-empty polytopes.
\newblock {\em Rocky Mountain J. Math.}, 50(3):881--939, 2020.

\bibitem[BS16]{Latt_3_topes_few_pts}
M.~Blanco and F.~Santos.
\newblock Lattice 3-polytopes with few lattice points.
\newblock {\em SIAM J. Discrete Math.}, 30(2):669--686, 2016.

\bibitem[BS24a]{Bohnert-Justus-Class}
Martin Bohnert and Justus Springer.
\newblock Classifying rational polygons with small denominator and few interior lattice points.
\newblock (preprint) arXiv:2410.17244, 2024.

\bibitem[BS24b]{Bohnert-Justus-Ehr}
Martin Bohnert and Justus Springer.
\newblock Generalizations of scott's inequality and pick's formula to rational polygons.
\newblock (preprint) arXiv:2411.11187, 2024.

\bibitem[GH15]{GG_JH_sph_poly}
Giuliano Gagliardi and Johannes Hofscheier.
\newblock Gorenstein spherical {F}ano varieties.
\newblock {\em Geom. Dedicata}, 178:111--133, 2015.

\bibitem[Ham23]{triangles}
Girtrude Hamm.
\newblock Classification of lattice triangles by their two smallest widths.
\newblock (preprint) arXiv:2304.03007, 2023.

\bibitem[Her10]{Herrmann}
Andrew Herrmann.
\newblock Classification of ehrhart quasi-polynomials of half-integral polygons, 2010.

\bibitem[HHK24]{data}
Girtrude Hamm, Johannes Hofscheier, and Alexander Kasprzyk.
\newblock Polygons with small denominator containing a small number of lattice points, 2024.
\newblock \emph{Zenodo}, \url{https://doi.org/10.5281/zenodo.14230584}.

\bibitem[HS09]{Haase_Schicho}
Christian Haase and Josef Schicho.
\newblock Lattice polygons and the number {$2i+7$}.
\newblock {\em Amer. Math. Monthly}, 116(2):151--165, 2009.

\bibitem[LZ91]{LagariasZiegler}
Jeffrey~C. Lagarias and G\"{u}nter~M. Ziegler.
\newblock Bounds for lattice polytopes containing a fixed number of interior points in a sublattice.
\newblock {\em Canad. J. Math.}, 43(5):1022--1035, 1991.

\bibitem[LZ11]{Class_conv_poly}
Heling Liu and Chuanming Zong.
\newblock On the classification of convex lattice polytopes.
\newblock {\em Adv. Geom.}, 11(4):711--729, 2011.

\bibitem[Mac71]{Ehr_Mac_recip}
I.~G. Macdonald.
\newblock Polynomials associated with finite cell-complexes.
\newblock {\em J. London Math. Soc. (2)}, 4:181--192, 1971.

\bibitem[NZ11]{LattProj}
Benjamin Nill and G\"{u}nter~M. Ziegler.
\newblock Projecting lattice polytopes without interior lattice points.
\newblock {\em Math. Oper. Res.}, 36(3):462--467, 2011.

\bibitem[Sch91]{Schmidt}
Wolfgang~M. Schmidt.
\newblock {\em Diophantine approximations and {D}iophantine equations}, volume 1467 of {\em Lecture Notes in Mathematics}.
\newblock Springer-Verlag, Berlin, 1991.

\bibitem[Sco76]{Scott}
P.~R. Scott.
\newblock On convex lattice polygons.
\newblock {\em Bull. Austral. Math. Soc.}, 15(3):395--399, 1976.

\end{thebibliography}

\end{document}